\documentclass[preprint,12pt]{elsarticle}
\usepackage{indentfirst}
\usepackage{amssymb}
\usepackage{verbatim}
\usepackage{amsthm}
\usepackage{fleqn}
\usepackage{graphicx}
\usepackage{epstopdf}
\usepackage{amsfonts}
\usepackage[colorlinks,citecolor=blue,urlcolor=blue]{hyperref}
\usepackage{amsmath}
\usepackage{bm}
\usepackage{appendix} 
\usepackage{geometry}
\usepackage{pifont}
\usepackage{mathrsfs}
\usepackage{tcolorbox}
\tcbuselibrary{breakable, skins}
\usepackage{graphicx} 
\usepackage{booktabs}

\usepackage{listings}
\usepackage{algorithm}   
\usepackage{algpseudocode} 
\usepackage{algpseudocodex} 
\newtheorem{theorem}{Theorem}[section]
\newtheorem{lemma}[theorem]{Lemma}
\newtheorem{proposition}[theorem]{Proposition}

\newtheorem{remark}[theorem]{Remark}

\newtheorem{assumption}[theorem]{Assumption}

\newtcolorbox{bluepar}{
	colback=white,           
	colframe=blue!60!black,  
	breakable,
	enhanced,
	boxrule=1.2pt,           
	arc=4pt,
	left=8pt,
	right=8pt,
	top=8pt,
	bottom=8pt,
	before skip=10pt,        
	after skip=10pt          
}

\newcommand{\cA}{\mathcal{A}}
\newcommand{\cB}{\mathcal{B}}
\newcommand{\cC}{\mathcal{C}}
\newcommand{\cD}{\mathcal{D}}

\newcommand{\cF}{\mathcal{F}}

\newcommand{\cH}{\mathcal{H}}

\newcommand{\cK}{\mathcal{K}}
\newcommand{\cL}{\mathcal{L}}
\newcommand{\cM}{\mathcal{M}}
\newcommand{\cN}{\mathcal{N}}
\newcommand{\cO}{\mathcal{O}}
\newcommand{\cP}{\mathcal{P}}

\newcommand{\cS}{\mathcal{S}}
\newcommand{\cT}{\mathcal{T}}
\newcommand{\cU}{\mathcal{U}}

\newcommand{\cW}{\mathcal{W}}

\newcommand{\EE}{\mathbb{E}}
\newcommand{\FF}{\mathbb{F}}

\newcommand{\LL}{\mathbb{L}}
\newcommand{\RR}{\mathbb{R}}

\newcommand{\bu}{\mathbf{u}}
\newcommand{\ti}{\tilde}

\newcommand{\al}{\alpha}

\newcommand{\ep}{\epsilon}

\newcommand{\la}{\lambda}

\newcommand{\drm}{\mathrm{d}}
\newcommand{\tu}{\tilde{\tilde{u}}}

\newcommand{\bea}{\bm{\eta}}
\newcommand{\bl}{\bm{\lambda}}

\usepackage{float}  
\usetikzlibrary{positioning}
\numberwithin{equation}{section}

\allowdisplaybreaks[4]

\begin{document}

\begin{frontmatter}

\title{Stochastic Mean-Field LQ Stackelberg Differential Games with Random Coefficients: Theory and a Deep FBSDE Picard Solver}

\author{Ying Yang\corref{cor1}}
\address{Department of Mathematics, Southern University of Science and Technology, Shenzhen 518055, P. R. China}
\ead{12331007@mail.sustech.edu.cn}

\author{Jie Xiong}
\address{Department of Mathematics and SUSTech International Center for Mathematics, Southern University of Science and Technology, Shenzhen 518055, P. R. China}
\ead{jxiong@mail.sustech.edu.cn}

\author{Zhouyu Wang}
\address{Department of Mathematics, Southern University of Science and Technology, Shenzhen 518055, P. R. China}
\ead{12431014@mail.sustech.edu.cn}

\cortext[cor1]{Corresponding author. \\ 
\quad*This work was supported by the National Key R\&D Program of China (2022YFA1006102), and the National Natural Science Foundation of China (12471418).}

 \begin{abstract}
This paper studies a stochastic mean-field linear-quadratic Stackelberg differential game with random coefficients. The interaction between mean-field terms and random coefficients precludes the direct use of conventional decoupling techniques. We apply an extended Lagrange multiplier method to derive an affine operator representation of the follower's optimal response. The induced leader problem is then formulated as a generalized
stochastic LQ control problem with operator-valued coefficients, and the Stackelberg optimal control is characterized through a Riccati-free coupled FBSDE system. We further develop a Deep FBSDE Picard Solver that preserves the Stackelberg order through follower-response learning, response-sensitivity extraction,
leader optimization, and neural augmented Lagrangian enforcement of mean-field consistency constraints. Numerical studies covering convergence diagnostics, discretization sensitivity, Riccati calibration, ablation tests, stability under control perturbations, Stackelberg--Nash comparisons, and a financial application support the effectiveness of the proposed framework.
 \end{abstract}

\begin{keyword}
 Stackelberg differential game \sep Mean-field LQ control \sep Random coefficients \sep FBSDE \sep Deep learning 
\end{keyword}

\end{frontmatter}
\section{Introduction} 
Dynamic games provide a natural framework for hierarchical optimization problems involving multiple decision makers with asymmetric roles. The Stackelberg game \cite{von2010market} is the classical model for such leader--follower interaction: the leader commits to a strategy first, and the follower responds optimally. This forces the leader to solve a fundamentally bilevel problem that is substantially harder than single-agent optimal control.  This structure arises in regulation, contract design, and resource allocation—situations where a dominant agent must anticipate the rational behavior of subordinates before acting. In continuous-time stochastic settings, linear-quadratic (LQ) models are particularly important because their linear state dynamics and quadratic performance criteria provide analytical tractability while remaining sufficiently expressive for applications such as portfolio allocation, production planning, resource regulation, and risk-sensitive tracking.

Classical deterministic LQ Stackelberg games and their open-loop equilibria have been extensively investigated; see, for example, \cite{abou1985analytical, freiling2001existence}. For stochastic systems, Yong \cite{yong2002leader} showed that incorporating the follower's rational response transforms the leader's problem into a stochastic control problem constrained by a forward-backward stochastic differential equation (FBSDE), revealing a fundamental difficulty absent in the single-agent case. This framework has since been extended to settings such as  partial information, mean-field interactions, jump diffusions, regime switching, and infinite-dimensional systems; see, for example,\cite{shi2016leader,moon2021linear,li2021linear,lv2020two} and \cite{ding2025infinite}.

Beyond classical LQ Stackelberg games, mean-field formulations provide a convenient mechanism for capturing aggregate effects in large-scale interacting systems. In LQ models, such effects are typically represented by the expectations of the state and control processes in the dynamics and cost functionals. Related mean-field backward stochastic equations (SDEs), mean-field forward–backward SDEs (FBSDEs), optimal control problems have been extensively studied; see, e.g.,
\cite{buckdahn2009mean},\cite{carmona2013mean},   \cite{carmona2018probabilistic}. These ideas have also been incorporated into hierarchical decision-making, leading to mean-field Stackelberg games; see, for example, \cite{bensoussan2015mean},\cite{lin2018open},\cite{lv2023linear},  \cite{wang2025linear}.

However, most existing works either focus on large-population decentralized equilibria or rely on deterministic or specially structured coefficients. Such restrictions are often inadequate in financial and engineering applications where model coefficients evolve randomly with the underlying information flow.  This paper investigates a random-coefficient mean-field LQ Stackelberg problem. A key mathematical difficulty in mean-field stochastic LQ control with random coefficients is that the adjoint equations may involve cross-moment terms such as $\mathbb{E}\big[A(t)^\top Y(t)\big]$, which in general cannot be simplified into $\mathbb{E}\big[A(t)^\top\big]\mathbb{E}[Y(t)]$. To overcome this difficulty, Xiong and Xu \cite{xiong2025mean} developed an extended Lagrange multiplier method, which introduces auxiliary deterministic variables for the mean state and mean control and relaxes the resulting consistency constraints through extended Lagrange multipliers.

Inspired by this approach, we adapt the extended Lagrange multiplier method to the hierarchical structure of a Stackelberg game. For a fixed leader's control, we first solve the follower's random-coefficient mean-field LQ problem and show that the follower's optimal response admits an affine operator representation
with respect to the initial state $x$, the leader's control $u_2(\cdot)$, and an inhomogeneous term. When this response is substituted into the leader's dynamics, the induced leader problem is governed by stochastic operator-valued coefficients. In contrast to \cite{wei2019linear}, where optimal controls for operator-valued LQ systems are characterized through integral kernel representations, our extended Lagrange multiplier approach produces an affine response-operator structure that can be naturally incorporated into the Deep FBSDE Picard Solver (DFPS) developed  in
Section~\ref{sec:5}.

By invoking the stochastic maximum principle, the optimal solution to this generalized problem can be characterized by a deeply coupled FBSDE system. However, solving this resulting system numerically remains highly nontrivial. Classical Riccati-based approaches become difficult to apply under stochastic operator-valued coefficients, while direct numerical discretization suffers from the severe coupling among the forward state equation, the backward adjoint equations, the mean-field consistency constraints, and the bilevel dependence of the follower's response on the leader's control.

Deep learning methods have recently provided powerful tools for
high-dimensional stochastic control problems and FBSDEs. Han et al.
\cite{han2017deep,han2018solving} proposed the deep BSDE method, which represents the martingale integrand by neural networks and trains the unknown initial value through a terminal loss; see also
\cite{beck2019machine,hu2019deep,han2020solving} for further developments. For fully coupled FBSDEs, Han and Long \cite{han2020convergence} established convergence guarantees under neural-network approximation. Ji et al. \cite{ji2022deep} further reformulated a fully coupled FBSDE as a stochastic Stackelberg differential game and solved it through a bi-level deep learning
procedure. These works provide important numerical tools for high-dimensional FBSDEs and stochastic control problems. However, they are not designed for the response-induced stochastic operator-valued coefficients arising in the present mean-field Stackelberg problem, where the leader's dynamics can only be formed
after the follower's rational response has been characterized.

Motivated by this theoretical structure, we propose a Deep FBSDE Picard Solver (DFPS) tailored to the operator-valued mean-field Stackelberg system. Rather than treating the bilevel game as a simultaneous system, DFPS preserves the Stackelberg order through a sequential pipeline: follower-response learning, response-sensitivity extraction, and leader optimization. A key difficulty is that the mean-field quantities $\mathbb E[X(t)]$ and $\mathbb E[u_i(t)]$ are endogenous equilibrium objects rather than exogenous coefficients. Hence, a direct Monte Carlo plug-in treatment would externalize these processes as batch-wise sample statistics and does not by itself enforce the mean-field
fixed-point consistency. Within DFPS, Picard iterations are used internally to handle the forward--backward coupling and mean-field consistency constraints in the corresponding player-specific FBSDE systems. Specifically, DFPS  trains the follower's response under exploratory leader controls, with mean-field consistency constraints enforced through neural augmented Lagrangian updates. It then extracts the follower's affine response sensitivities with respect to the leader's control, and finally trains the leader's policy using the follower-induced dynamics and the extracted bilevel sensitivities.
 
The main contributions of this paper are summarized as follows:
\begin{itemize}
 \item We adapt the extended Lagrange multiplier method to the Stackelberg hierarchy and obtain an affine operator representation of the follower's 	rational response. This representation characterizes how the follower's response induces a generalized leader problem governed by stochastic operator-valued coefficients.
	
\item We propose a Deep FBSDE Picard Solver (DFPS) that preserves the Stackelberg order through a sequential pipeline of follower-response learning, response-sensitivity extraction, and leader optimization. The mean-field quantities $\mathbb{E}[X(t)]$ and $\mathbb{E}[u_i(t)]$ are endogenous equilibrium objects rather than exogenous coefficients, so DFPS uses Picard iterations to handle the forward--backward coupling and enforces mean-field consistency through neural augmented Lagrangian update mechanism. Numerical experiments illustrate convergence, structural component necessity, 
and numerical stability under control perturbations.
\end{itemize}

The remainder of this paper is organized as follows. Section~\ref{sec1} introduces the stochastic mean-field LQ Stackelberg model, defines the admissible control spaces, and provides several preliminary estimates. Section~\ref{sec2} studies the follower's and the leader's problems, deriving the associated optimality conditions and affine response representations. Section~\ref{sec:5} details DFPS and its augmented Lagrangian implementation, and demonstrates its performance through numerical convergence and feasibility tests, discretization sensitivity analysis, a Riccati sanity check, ablation studies, equilibrium validation, and a financial application. Section~\ref{sec:conclusion} concludes the paper.

\section{Model and Preliminaries}\label{sec1}
\subsection{Model}
Let $(\Omega, \mathcal{F}, \mathbb{F}, \mathbb{P})$ be a complete filtered probability space on which a one-dimensional standard Brownian motion $\{W(t): 0 \leq t \leq T\}$ is defined. Here, $\mathbb{F} = \{\mathcal{F}_t\}_{t \geq 0}$ denotes the natural filtration generated by $W(t)$, augmented by all $\mathbb{P}$-null sets. We first consider   the following controlled linear forward stochastic differential equation with random coefficients on the time interval $[0,T]$:
\begin{equation}\label{state0}
	\left\{\begin{aligned}
		\drm X(t)=&\left[A_1(t)X(t)+A_2(t)\mathbb{E}[X(t)]+B_1(t)u_1(t)+B_2(t)u_2(t)+b(t)\right]\drm t\\
		&+\left[C_1(t)X(t)+C_2(t)\mathbb{E}[X(t)]+D_1(t)u_1(t)+D_2(t)u_2(t)+\sigma(t)\right]\drm W(t),\\
		X(0)=&x,
	\end{aligned}\right.
\end{equation}
where $A_i(\cdot), C_i(\cdot):[0,T]\times \Omega \to \mathbb{R}^{n\times n}$ and $ B_i(\cdot),   D_i :[0,T]\times \Omega \to  \mathbb{R}^{n\times m_i}$ for \(i=1,2\),  are matrix-valued $\mathbb{F}$-adapted processes, and $b(\cdot), \sigma(\cdot):[0,T]\times \Omega\to\RR^n$ are $\FF$-adapted processes as inhomogeneous terms.   The initial state \(x\in\mathbb{R}^n\) is fixed throughout this article. And $X(\cdot)$ valued in $\mathbb{R}^{n}  $ is the state process. Moreover, $u_i(\cdot)$ are valued in $\mathbb{R}^{m_i}$ (for $i=1,2$) and are $\mathbb{F}$-adapted processes satisfying $\mathbb{E}\left[\int_0^T|u_i(s)|^2ds\right] < \infty$, which represent the control processes of the follower (for $i=1$) and the leader (for $i=2$). For notational simplicity, we further denote \(\bar{X}\) as the expectation of \(X\), and \(\bar{u}_i\) as the expectation of \(u_i\) (for \(i=1,2\)), i.e., \(\bar{X}(\cdot) = \mathbb{E}[X(\cdot)]\), and \(\bar{u}_i(\cdot)=\EE[u_i(\cdot)]\).

Now the follower and the leader seek to minimize the following objective functional with random coefficients for each player
\begin{equation}\label{cost0}
	\begin{aligned}
		J_i( u_1(\cdot),u_2(\cdot))=&\mathbb{E}\Big\{\int_0^T\left[\langle Q_i(s)X(s),X(s)\rangle+\langle\bar{Q}_i(s)\bar{X}(s),\bar{X}(s)\rangle+ \langle R_i(s)u_i(s),u_i(s)\rangle\right. \\
		&\left.\quad\qquad +\langle \bar{R}_i(s)\bar{u}_i(s),\bar{u}_i(s)\rangle\right]\drm s+\langle G_iX(T),X(T)\rangle\Big\},\qquad \qquad i=1,2,
	\end{aligned}
\end{equation}
where for $i=1,2$, $G_i$ are $\mathcal{F}_T$-measurable random matrices, $Q_i(\cdot), \bar{Q}_i(\cdot):[0,T]\times \Omega \to \mathbb{R}^{n\times n}$, and $R_i(\cdot), \bar{R}_i(\cdot):[0,T]\times\Omega\to\mathbb{R}^{m_i\times m_i}$.

We assume that the admissible control sets for the player \(i\) (for \(i=1,2\)) are defined as follows:
\[
\mathcal{U}_i[0, T] = \left\{ u_i(\cdot) : [0, T] \times \Omega \to \mathbb{R}^{m_i} \mid u_i(\cdot) \text{ is $\mathcal{F}_t$-adapted and } \mathbb{E} \int_0^T |u_i(t)|^2 dt < \infty \right\}, \quad i = 1, 2.
\]

 With these control sets defined, the problem is formally formulated as a Linear-Quadratic Mean-Field Stackelberg Differential Game with random coefficients. To tackle the inherent complexity of this game, our solution strategy builds upon the general Stackelberg framework introduced by Yong \cite{yong2002leader}, adapting it to incorporate the techniques for random-coefficient mean-field LQ problems developed by Xiong and Xu \cite{xiong2025mean}. Specifically, this integrated approach naturally unfolds in the following two steps:
\begin{description}
	\item[Step 1: Solving the follower's problem.]
	For any given admissible control $u_2(\cdot) \in \mathcal{U}_2[0,T]$ of the leader and a fixed initial state $x \in \mathbb{R}^n$, the follower's problem constitutes a mean-field stochastic linear-quadratic (MFSLQ) control problem with random coefficients. Following \cite{xiong2025mean}, we introduce the auxiliary variables $\bar{u}_1(\cdot) = \alpha_1(\cdot)$ and $\bar{X}(\cdot) = \beta_1(\cdot)$ to recast it as a constrained optimization problem, which is then relaxed via the Extended Lagrange Multipliers (ELMs) method. This allows us to tackle a general linear-quadratic problem with respect to $\alpha_1(\cdot)$ and $\beta_1(\cdot)$, ultimately yielding an affine representation of the follower's optimal response $\tilde{u}_1(\cdot)$. In this representation, the associated operators act linearly on the initial state $x$ and the leader's control $u_2(\cdot)$.
	
	\item[Step 2: Solving the leader's problem.]
	By substituting the follower's affine response $\tilde{u}_1(\cdot)$ back into the state dynamics, the leader's problem is transformed into a generalized MFSLQ control problem. The leader's optimal strategy $\tilde{u}_2(\cdot)$ is subsequently characterized by deriving the corresponding optimality system, employing the same ELMs methodology utilized in Step 1.
\end{description}

To formulate the leader's problem, we substitute the affine operator representation of $\tilde{u}_1(\cdot)$ into the original state equation \eqref{state0}. This substitution naturally gives rise to a generalized state equation governed by operator-valued stochastic processes, which we define as follows:
\begin{equation}\label{state1}
	\left\{
	\begin{aligned}
		\drm X(t) &= \left[ (\cA_1 X)(t) + (\cA_2 \bar{X})(t)+(\cB_1 u_1)(t) + (\cB_2 u_2)(t) + b(t) \right]\drm t \\
		&\quad + \left[ (\cC_1 X)(t)+(\cC_2\bar{X})(t) + (\cD_1 u_1)(t) + (\cD_2 u_2)(t) + \sigma(t) \right]\drm W(t), \\
		X(0) &= x,
	\end{aligned}
	\right.
\end{equation}
where $\cA_i(\cdot), \cB_i(\cdot), \cC_i(\cdot)$, and $\cD_i(\cdot)$ (for $i=1,2$) are suitably defined bounded linear operators. Furthermore, the inhomogeneous terms $b(\cdot)$ and $\sigma(\cdot)$ are $\mathbb{F}$-adapted square-integrable stochastic processes.

With these generalized dynamics established, we can now formally characterize the optimal control problems for both the follower and the leader.

\noindent\textbf{Problem (MFSOLQ-F).} For a given initial state $x \in \mathbb{R}^n$ and any fixed leader's control $u_2(\cdot) \in \mathcal{U}_2[0,T]$, find a control $\tilde{u}_1(\cdot) \in \mathcal{U}_1[0,T]$ that minimizes the cost functional \eqref{cost0} for $i=1$ subject to the state equation \eqref{state1}, i.e.,
\begin{align}
	J_1\left( \tilde{u}_1(\cdot), u_2(\cdot)\right) = \inf_{u_1(\cdot) \in \mathcal{U}_1} J_1\left( u_1(\cdot), u_2(\cdot)\right).
\end{align}

As previously discussed, the follower's optimal control $\tilde{u}_1(\cdot)$ generally depends on both the initial state $x$ and the leader's control strategy $u_2(\cdot)$. To reflect this dependence explicitly, we adopt the notation $\tilde{u}_1[x, u_2](\cdot)$ to denote the follower's optimal response function.

\noindent\textbf{Problem (MFSOLQ-L).} Given the follower's optimal response $\tilde{u}_1[x, u_2(\cdot)](\cdot)$, find a control $\tilde{u}_2 (\cdot) \in \mathcal{U}_2[0,T]$ that minimizes the cost functional \eqref{cost0} for $i=2$, i.e., 
\begin{align}
	J_2\left( \tilde{u}_1[x, \tilde{u}_2](\cdot), \tilde{u}_2(\cdot)\right) = \inf_{u_2(\cdot) \in \mathcal{U}_2} J_2\left( \tilde{u}_1[x, u_2(\cdot)](\cdot), u_2(\cdot)\right).
\end{align}

\subsection{Preliminaries}\label{sec:2}
For a random variable $\xi$, we write $\xi \in \mathcal{F}_t$ if $\xi$ is $\mathcal{F}_t$-measurable; for a stochastic process $\phi(\cdot)$, $\phi(\cdot) \in \mathbb{F}$ means it is $\mathbb{F}$-adapted. For Euclidean spaces $\mathbb{H} = \mathbb{R}^n, \mathbb{R}^{m\times n}, \mathbb{S}^n_+$, and $p,q > 0$, we define the following spaces:
\begin{itemize}
	\item $L^{p,q}_\mathbb{F}(\mathbb{H})\equiv L^p_\mathbb{F}(\Omega;L^q([0,T];\mathbb{H}))$: the space of $\mathbb{F}$-adapted processes $X:[0,T]\times\Omega\rightarrow\mathbb{H}$ such that $\mathbb{E}\Big[\Big(\int_0^T \|X(s,\omega)\|_\mathbb{H}^q \mathrm{d}s\Big)^{p}\Big]<\infty$.
	\item $L^{2,c}_{\mathbb{F}}(\mathbb{R}^n)\equiv L_{\mathbb{F}}^{2,c}\big(\Omega; C([0,T]; \mathbb{H}) \big)$: the space of continuous $\mathbb{F}$-adapted processes $X: [0, T]\times \Omega \to \mathbb{H}$ such that $\mathbb{E}\left[\sup_{0 \leq s \leq T} \|X(s,\omega)\|_\mathbb{H} ^2\right]< \infty$.
	\item $L^2_{\mathbb{F}}(\mathbb{H})\equiv L_{\mathbb{F}}^{2}(0, T; \mathbb{H})$: the space of $\mathbb{F}$-adapted $\mathbb{H}$-valued square-integrable stochastic processes.
	\item $L^{\infty,c}_{\mathbb{F}}(\mathbb{H})\equiv L^{\infty,c}_\mathbb{F} (0,T;\mathbb{H})$: the space of $\mathbb{F}$-adapted $\mathbb{H}$-valued bounded continuous processes.
	\item $L_{\mathcal{G}}^{2}(\mathbb{H})\equiv L^2_\mathcal{G}(\Omega; \mathbb{H})$: the space of $\mathcal{G}$-measurable $\mathbb{H}$-valued square-integrable random variables, where $\mathcal{G}\subset \mathbb{F}$ is a sub-$\sigma$-field.
	\item $L^\infty_\mathcal{G}(\mathbb{H})\equiv L^\infty_\mathcal{G}(\Omega;\mathbb{H})$: the space of $\mathcal{G}$-measurable $\mathbb{H}$-valued bounded random variables.
	\item $\mathbb{L}^2$: the space of deterministic, real-valued, square-integrable functions on $[0, T]$.
	\item $\mathcal{L}^p_{\mathbb{F}}(\mathcal{X};\mathcal{Y})$ (for $p\in[0,\infty]$): the space of all $\mathbb{F}$-adapted operators $\mathcal{B}(t)\colon \mathcal{X}\to\mathcal{Y}$, equipped with the norm
	\[
	\lVert \mathcal{B}(\cdot) \rVert_p
	=
	\begin{cases}
		\displaystyle\left( \int_0^T \lVert \mathcal{B}(t) \rVert^p \mathrm{d} t \right)^{\frac{1}{p}}, & p \in [1,\infty), \\[4pt]
		\operatorname*{ess\,sup}_{t \in [0,T]} \lVert \mathcal{B}(t) \rVert, & p = \infty,
	\end{cases}
	\]
	where the operator norm $\|\mathcal{B}(t)\|$ is defined by
	\[
	\|\mathcal{B}(t)\|
	=
	\sup\!\left\{
	\big(\mathbb{E}[|\mathcal{B}(t)\eta|^2]\big)^{\frac{1}{2}}
	\;\big|\;
	\eta\in L^2_{\mathcal{F}_t}(\mathbb{R}^m),\
	\big(\mathbb{E} [|\eta|^2]\big)^{\frac{1}{2}}=1
	\right\}.
	\]
	In particular, we denote $\mathcal{L}^p_{\FF}(\mathcal{X})=\mathcal{L}^p_{\FF}(\mathcal{X};\mathcal{X})$ for $p\in[0,\infty]$.
\end{itemize}

We introduce the following standard assumptions:

\textbf{(H1):} $\mathcal{A}_1,\mathcal{C}_1\in \mathcal{L}^\infty_{\mathbb{F}}\left(L^{2}_{\mathcal{F}_T}(\mathbb{R}^n)\right)$, $\mathcal{A}_2,\mathcal{C}_2\in\mathcal{L}^\infty_{\mathbb{F}}(\mathbb{L}^2; L^2_{\mathcal{F}_T}(\mathbb{R}^n))$, $b(\cdot),\sigma(\cdot)\in L^{2}_{\mathbb{F}}(\mathbb{R}^n)$, and $\mathcal{B}_i,\mathcal{D}_i\in \mathcal{L}^{\infty}_{\mathbb{F}}\left(L^2_{\mathcal{F}_T}(\mathbb{R}^{m_i});L^2_{\mathcal{F}_T}(\mathbb{R}^n)\right)$ for $i=1,2$.

\textbf{(H2):} $G_i \in L^\infty_{\mathcal{F}_T}(\mathbb{S}_+^n)$; $Q_i(\cdot), \bar{Q}_i(\cdot)\in L^\infty_\mathbb{F}(\mathbb{S}^n_+)$; $R_i(\cdot), \bar{R}_i(\cdot)\in L^\infty_\mathbb{F}(\mathbb{S}^{m_i}_+)$ for $i=1,2$. Moreover, there exists a constant $\delta>0$ such that $\bar{Q}_i(s)\geq \delta \mathbf{I}_{n}$ and $R_i(s), \bar{R}_i(s) \geq \delta \mathbf{I}_{m_i}$ a.e.\ $s\in[0,T]$, a.s.

The following lemma provides standard a priori estimates for the state process, which are essential for establishing the well-posedness of our optimal control problems. Its proof follows the standard arguments for general control problems and is thus omitted here.

\begin{lemma}\label{lem:4.1}
	Let (H1) hold. Then, for any $x\in\mathbb{R}^n$ and any pair of controls $(u_1(\cdot),u_2(\cdot))\in\mathcal{U}_1\times\mathcal{U}_2$, the state equation \eqref{state1} admits a unique adapted solution $X(\cdot)\in L^{2,c}_{\mathbb{F}}(\mathbb{R}^n)$. Moreover, there exists a constant $K>0$, independent of $x$, $u_1(\cdot)$, and $u_2(\cdot)$, such that 
	\begin{align}\label{eq:ieX}
		\mathbb{E}\left[ \sup_{0 \leq t \leq T} \lvert X(t) \rvert^2 \right] \leq K \left( \lvert x \rvert^2 + \mathbb{E}\int_0^T \Big( \lvert u_1(s) \rvert^2 + \lvert u_2(s) \rvert^2 + \lvert b(s) \rvert^2 + \lvert \sigma(s) \rvert^2 \Big) \mathrm{d}s \right),
	\end{align}
	where the constant \(K>0\) depending on \(\|\cA_i(\cdot)\|_\infty\), \(\|\cB_i(\cdot)\|_\infty\), \(\|\cC_i(\cdot)\|_\infty\), and \(\|\cD_i(\cdot)\|_\infty\) for \(i=1,2\).
\end{lemma}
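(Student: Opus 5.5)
The plan is a Picard contraction argument on a short time interval, followed by concatenation over $[0,T]$, and then a Gronwall-type a priori bound. Throughout I read the operators in (H1) as acting on the path at time $t$ in a nonanticipative way. Concretely, $(\cA_1 X)(t)$ depends on $X(t)\in L^2_{\mathcal{F}_t}(\mathbb{R}^n)$ and satisfies $\mathbb{E}|(\cA_1X)(t)|^2\le \|\cA_1\|_\infty^2\,\mathbb{E}|X(t)|^2$. The same holds for $\cC_1$, and for $\cA_2,\cC_2$ acting on the deterministic $\bar X(t)$, where $|\bar X(t)|^2\le \mathbb{E}|X(t)|^2$. It also holds for $\cB_i,\cD_i$ acting on $u_i(t)$. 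I take the operator norm defined in Section~\ref{sec:2} to encode exactly this pointwise-in-time $L^2(\Omega)$ bound, and I assume the images are $\mathcal{F}_t$-adapted so that the stochastic integral is well defined.

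\textbf{Existence and uniqueness.} For $Y\in L^{2,c}_{\FF}(\mathbb{R}^n)$, define $\Phi(Y)$ as the right-hand side of \eqref{state1} with $X$ replaced by $Y$ and $\bar X$ replaced by $\mathbb{E}[Y]$:
\[
\Phi(Y)(t)=x+\int_0^t\big[(\cA_1Y)+(\cA_2\mathbb{E}Y)+(\cB_1u_1)+(\cB_2u_2)+b\big]\drm s+\int_0^t\big[(\cC_1Y)+(\cC_2\mathbb{E}Y)+(\cD_1u_1)+(\cD_2u_2)+\sigma\big]\drm W(s).
\]
Applying Cauchy--Schwarz to the $\drm s$ part and the Burkholder--Davis--Gundy (Doob) inequality to the martingale part, and using linearity and the operator bounds, I expect
\[
\mathbb{E}\sup_{s\le t}|\Phi(Y)-\Phi(Y')|^2\le L\,(t+1)\int_0^t \mathbb{E}\sup_{r\le s}|Y-Y'|^2\,\drm s,
\]
with $L$ depending only on $\|\cA_i\|_\infty$ and $\|\cC_i\|_\infty$. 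So either $\Phi$ is a contraction on $[0,\delta]$ for small $\delta$ (and one iterates over subintervals), or one uses an exponentially weighted norm $\sup_t e^{-\lambda t}\mathbb{E}\sup_{s\le t}|\cdot|^2$ with $\lambda$ large. The second route avoids the concatenation step, which is awkward with nonlocal operators, so I would use it. The same estimate with $Y'=0$ shows that $\Phi$ maps $L^{2,c}_{\FF}$ into itself, and continuity of paths comes from the integral representation. Banach's fixed point theorem then gives a unique solution.

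\textbf{A priori estimate.} Apply the same Cauchy--Schwarz and BDG bounds to $X$ itself:
\[
\mathbb{E}\sup_{s\le t}|X(s)|^2\le 3|x|^2+K_1\int_0^t \mathbb{E}\sup_{r\le s}|X(r)|^2\drm s+K_1\,\mathbb{E}\int_0^T\big(|u_1|^2+|u_2|^2+|b|^2+|\sigma|^2\big)\drm s.
\]
Gronwall's inequality then gives \eqref{eq:ieX} with $K=\max(3,K_1)e^{K_1T}$, where $K_1$ depends only on $T$ and the operator norms. In particular $K$ does not depend on $x$ or the controls, as claimed.

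\textbf{Main obstacle.} The delicate point is the interpretation of the operators. The norm on $\mathcal{L}^\infty_{\FF}$ is a pointwise-in-$t$ bound on $L^2_{\mathcal{F}_t}$, so I must check that $(\cA_1X)(t)$ really depends only on $X(t)$, or at most on $X|_{[0,t]}$, and is $\mathcal{F}_t$-measurable. If the operators instead act on the whole path, one needs a Volterra-type bound of the form $\mathbb{E}|(\cA_1X)(t)|^2\le C\,\mathbb{E}\sup_{s\le t}|X(s)|^2$. The argument above goes through unchanged under that bound. I would state this causality explicitly as the reading of (H1) that is used.
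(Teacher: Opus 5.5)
Your proposal is correct and is the standard argument the paper relies on: the paper omits the proof of Lemma~\ref{lem:4.1} as following from standard arguments, and your route supplies exactly that. The route is a Banach fixed point in an exponentially weighted norm on $L^{2,c}_{\FF}(\RR^n)$, with Cauchy--Schwarz on the drift, Doob/BDG on the martingale part, $|\bar X(t)|^2\le \EE|X(t)|^2$ for the mean-field terms, and Gronwall for the estimate \eqref{eq:ieX}. Your pointwise-in-time reading of (H1) matches the operator norm defined in Section~\ref{sec:2}. For the control terms you only need the integrated bound $\EE\int_0^T|(\cB_i u_i)(t)|^2\drm t\le C\,\EE\int_0^T|u_i(t)|^2\drm t$, since they enter the fixed-point map as inhomogeneous terms.
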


Next, adapted from \cite[Proposition 2.6]{wei2019linear}, we state the well-posedness and regularity results for the associated backward stochastic differential equation (BSDE):
\begin{equation}\label{eq:BSDE1}
	\left\{
	\begin{aligned}
		\mathrm{d} Y(s)&=-\left[\mathcal{A}_1^\ast Y+\mathcal{C}_1^\ast Z+\mathbb{E}\left[\mathcal{A}_2^\ast Y+\mathcal{C}_2^\ast Z\right]+Q\right]\mathrm{d} s+Z(s)\mathrm{d} W(s),\\
		Y(T)&=\zeta\in L_{\mathcal{F}_T}^2(\mathbb{R}^n).
	\end{aligned}
	\right.
\end{equation}

\begin{lemma}\label{lem:esYZ}
	Let (H1) and (H2) hold, and suppose $Q(\cdot)\in L^{2}_{\mathbb{F}}(\mathbb{R}^n)$. Then, the BSDE \eqref{eq:BSDE1} admits a unique solution $(Y(\cdot),Z(\cdot))\in L^{2,c}_{\mathbb{F}}(\mathbb{R}^n)\times L^2_{\mathbb{F}}(\mathbb{R}^n)$. Moreover,
	\begin{align}
		\mathbb{E}\left[\sup_{0\leq t\leq T}|Y(t)|^2\right]+\mathbb{E} \left[\int_0^T |Z(s)|^2\mathrm{d} s\right]\leq K\mathbb{E}\left[|\zeta|^2+\int_0^T |Q(s)|^2\mathrm{d} s\right],
	\end{align}
	where $K>0$ is a constant depending on $\|\mathcal{A}_i(\cdot)\|_\infty$ and $\|\mathcal{C}_i(\cdot)\|_\infty$ for $i=1,2$.
\end{lemma}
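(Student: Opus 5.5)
We prove Lemma \ref{lem:esYZ} by a contraction argument on the space $\mathcal{M}:=L^{2,c}_{\mathbb{F}}(\mathbb{R}^n)\times L^2_{\mathbb{F}}(\mathbb{R}^n)$. Throughout we treat $\mathcal{A}_i^\ast,\mathcal{C}_i^\ast$ as bounded adapted operators, using (H1) and the convention of \cite[Proposition 2.6]{wei2019linear}. In particular, for a.e.\ $s$ the operators $\mathcal{A}_1^\ast(s),\mathcal{C}_1^\ast(s)$ map $L^2_{\mathcal{F}_s}(\mathbb{R}^n)$ into itself with norm at most $\|\mathcal{A}_1\|_\infty$ and $\|\mathcal{C}_1\|_\infty$. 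The mean-field parts $\mathcal{A}_2^\ast,\mathcal{C}_2^\ast$ map $L^2_{\mathcal{F}_s}(\mathbb{R}^n)$ into $\mathbb{L}^2$-valued deterministic quantities. Hence
\[
\big|\mathbb{E}[\mathcal{A}_2^\ast Y+\mathcal{C}_2^\ast Z](s)\big|^2\le K\big(\mathbb{E}|Y(s)|^2+\mathbb{E}|Z(s)|^2\big).
\]

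\textbf{Step 1 (Picard map).} Given $(y,z)\in\mathcal{M}$, define
\[
f(s):=\mathcal{A}_1^\ast y+\mathcal{C}_1^\ast z+\mathbb{E}[\mathcal{A}_2^\ast y+\mathcal{C}_2^\ast z]+Q.
\]
Then $f\in L^2_{\mathbb{F}}(\mathbb{R}^n)$. By the martingale representation theorem, the BSDE $\mathrm{d}Y=-f\,\mathrm{d}s+Z\,\mathrm{d}W$, $Y(T)=\zeta$, has a unique solution, given by
\[
Y(t)=\mathbb{E}\Big[\zeta+\int_t^T f\,\mathrm{d}s\,\Big|\,\mathcal{F}_t\Big].
\]
Continuity of $Y$ and the bound $\mathbb{E}\sup|Y|^2<\infty$ follow from Doob's inequality. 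This defines a map $\Gamma(y,z)=(Y,Z)$ on $\mathcal{M}$.

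\textbf{Step 2 (contraction).} Take two inputs and set $\hat Y=Y-Y'$, and similarly for the other differences. Apply Itô's formula to $e^{\beta s}|\hat Y(s)|^2$ and take expectations. This gives
\[
\mathbb{E}\Big[e^{\beta t}|\hat Y(t)|^2+\int_t^T e^{\beta s}\big(\beta|\hat Y|^2+|\hat Z|^2\big)\mathrm{d}s\Big]
=2\,\mathbb{E}\int_t^T e^{\beta s}\langle \hat Y,\hat f\rangle\,\mathrm{d}s .
\]
Here $\hat f$ depends on $(\hat y,\hat z)$. Bound the right-hand side by $\tfrac{\beta}{2}\mathbb{E}\int e^{\beta s}|\hat Y|^2+\tfrac{2}{\beta}\mathbb{E}\int e^{\beta s}|\hat f|^2$. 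Then use $\mathbb{E}|\hat f(s)|^2\le K\big(\mathbb{E}|\hat y(s)|^2+\mathbb{E}|\hat z(s)|^2\big)$. The expectation-operator term is handled by Jensen's inequality: $|\mathbb{E}[\cdot]|^2\le\mathbb{E}|\cdot|^2$ after the operator bound. Choosing $\beta$ large makes $\Gamma$ a strict contraction in the equivalent weighted norm $\mathbb{E}\int e^{\beta s}(|Y|^2+|Z|^2)\,\mathrm{d}s$. The Banach fixed point theorem then gives existence and uniqueness. The $\sup$-norm is recovered by the BDG inequality.

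\textbf{Step 3 (estimate).} Apply the same Itô computation to the fixed point itself, now with $Q$ and $\zeta$ present. This yields
\[
\mathbb{E}|Y(t)|^2+\mathbb{E}\int_t^T|Z|^2\,\mathrm{d}s\le \mathbb{E}|\zeta|^2+C\,\mathbb{E}\int_t^T\big(|Y|^2+|Q|^2\big)\,\mathrm{d}s+\tfrac12\mathbb{E}\int_t^T|Z|^2\,\mathrm{d}s .
\]
The $\tfrac12$ coefficient on the $Z$ term is obtained by Young's inequality, so that term can be absorbed into the left-hand side. Gronwall's inequality then bounds $\sup_t\mathbb{E}|Y(t)|^2$ and $\mathbb{E}\int|Z|^2$. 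Finally, the BDG inequality applied to the stochastic integral in the $|Y|^2$ expansion upgrades the bound to $\mathbb{E}\sup_t|Y(t)|^2$. The constant depends only on $\|\mathcal{A}_i\|_\infty$ and $\|\mathcal{C}_i\|_\infty$.

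The main obstacle is the rigorous meaning of the adjoint operators and their adaptedness. We must check that $\mathcal{A}_1^\ast(s)y(s)$ is $\mathcal{F}_s$-measurable, so that $f$ is adapted. This does not follow automatically from the norm on $L^2_{\mathcal{F}_T}$. I would define the adjoint by composing with the conditional expectation $\mathbb{E}[\cdot\,|\,\mathcal{F}_s]$, which is a contraction. This choice preserves the operator-norm bounds and is exactly the convention used in \cite{wei2019linear}. Everything else is routine.
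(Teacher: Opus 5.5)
Your argument is correct, and it differs from the paper mainly in being self-contained. The paper gives no proof of this lemma; it only states it as an adaptation of \cite[Proposition 2.6]{wei2019linear}. As a result, the adjoint convention and the treatment of the extra term $\mathbb{E}[\mathcal{A}_2^\ast Y+\mathcal{C}_2^\ast Z]$ are left implicit there, and your route makes both explicit.

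The key observation is that the generator is Lipschitz only in $L^2(\Omega)$ at each fixed time, not pathwise. That is all the exponentially weighted contraction and the It\^{o}--Gronwall estimate need, provided Cauchy--Schwarz in $\Omega$ at fixed $s$ is applied before Young's inequality, as your Step 3 implicitly does. The mean-field term is just one more bounded operator from $L^2_{\mathcal{F}_s}(\mathbb{R}^n)$ into constants. Your conditional-expectation definition of $\mathcal{A}_1(s)^\ast$ is exactly the Hilbert adjoint of $\mathcal{A}_1(s)$ viewed as an operator on $L^2_{\mathcal{F}_s}(\mathbb{R}^n)$, whose norm is the one the paper uses. The citation buys brevity and the general operator framework; your argument buys a checkable proof that visibly covers the mean-field extension.

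Two small repairs are needed.

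First, do not apply Banach's theorem on $L^{2,c}_{\mathbb{F}}(\mathbb{R}^n)\times L^2_{\mathbb{F}}(\mathbb{R}^n)$ with the norm $\mathbb{E}\int_0^T e^{\beta s}(|Y|^2+|Z|^2)\,\mathrm{d}s$. That norm is not equivalent to the $L^{2,c}$ norm, and the space is not complete under it. Run the contraction on $L^2_{\mathbb{F}}(\mathbb{R}^n)\times L^2_{\mathbb{F}}(\mathbb{R}^n)$ instead:
\begin{itemize}
\item $\Gamma$ is well defined there, since only $y\in L^2_{\mathbb{F}}$ is needed for $f\in L^2_{\mathbb{F}}$.
\item By your Step 1, its range consists of continuous $Y$ with $\mathbb{E}\sup|Y|^2<\infty$, so the fixed point automatically has the required regularity.
\item Uniqueness in the smaller space follows from uniqueness in the larger one.
\end{itemize}

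Second, the conditional expectation only gives $\mathcal{F}_s$-measurability of $\mathcal{A}_1^\ast(s)y(s)$ for each fixed $s$. Progressive measurability of $s\mapsto \mathcal{A}_1^\ast(s)y(s)$, needed for $f\in L^2_{\mathbb{F}}$, does not follow from (H1) as written. It must be taken as part of what an $\mathbb{F}$-adapted operator process means, as in the cited source.
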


\section{Theory of the Problem (MFSOLQ-F) and the Problem (MFSOLQ-L)}\label{sec2}
\subsection{Solving the Problem (MFSOLQ-F)}\label{sec:3}

In this subsection, we detail the solution procedure for Problem (MFSOLQ-F). Since the leader's control $u_2(\cdot) \in \mathcal{U}_2[0,T]$ is fixed throughout this stage, the terms involving $u_2(\cdot)$ act as given exogenous processes and can naturally be absorbed into the inhomogeneous terms of the system. Due to space limitations, the detailed proofs of the theoretical results presented in this subsection are deferred to   \ref{app:2}.

To address Problem (MFSOLQ-F), we first establish the strict convexity of the cost functional $J_1(u_1(\cdot),u_2(\cdot))$ with respect to the follower's control. This structural property is crucial, as it ensures the existence and uniqueness of the optimal response $\tilde{u}_1(\cdot)$. 

\begin{theorem}\label{Th:3.1}
	Let (H1) and (H2) hold. Then, for any initial state $x\in\mathbb{R}^n$ and any fixed leader's control $u_2(\cdot)\in\mathcal{U}_2[0,T]$, the cost functional $J_1(u_1(\cdot),u_2(\cdot))$ is strictly convex with respect to $u_1(\cdot)$.
\end{theorem}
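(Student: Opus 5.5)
The argument rests on the affine--quadratic structure of $J_1$ in $u_1(\cdot)$. Fix $x$ and $u_2(\cdot)$. By linearity of the state equation \eqref{state1} (equivalently \eqref{state0}), for any $u_1(\cdot)\in\mathcal{U}_1[0,T]$ we write $X^{u_1}=X^0+\hat X^{u_1}$. Here $X^0$ solves \eqref{state1} with $u_1=0$ (it carries $x$, $u_2$, $b$, $\sigma$). The process $\hat X^{u_1}$ solves the homogeneous equation with zero initial state, zero inhomogeneous terms and $u_2=0$, driven only by $u_1$. By Lemma \ref{lem:4.1}, the map $u_1\mapsto \hat X^{u_1}$ is linear and bounded from $\mathcal{U}_1[0,T]$ into $L^{2,c}_{\mathbb{F}}(\mathbb{R}^n)$. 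Since expectation is linear, $\mathbb{E}[\hat X^{u_1}]$ and $\bar u_1$ are also linear in $u_1$.

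Next, take $u_1,v_1\in\mathcal{U}_1$ with $u_1\neq v_1$ in $L^2_{\mathbb{F}}$, and $\lambda\in(0,1)$. Using the standard identity $\lambda q(a)+(1-\lambda)q(b)-q(\lambda a+(1-\lambda)b)=\lambda(1-\lambda)q(a-b)$ for each quadratic form $q$ appearing in \eqref{cost0}, we obtain
\begin{align*}
&\lambda J_1(u_1,u_2)+(1-\lambda)J_1(v_1,u_2)-J_1(\lambda u_1+(1-\lambda)v_1,u_2)\\
&\quad=\lambda(1-\lambda)\,\mathbb{E}\Big\{\int_0^T\big[\langle Q_1\hat X,\hat X\rangle+\langle\bar Q_1\bar{\hat X},\bar{\hat X}\rangle+\langle R_1 w,w\rangle+\langle\bar R_1\bar w,\bar w\rangle\big]\drm s+\langle G_1\hat X(T),\hat X(T)\rangle\Big\}.
\end{align*}
In this expression $w=u_1-v_1$ and $\hat X=\hat X^{w}$. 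The cross terms cancel exactly because the state difference between the two controls is $\hat X^{w}$, by the linearity established above.

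It remains to show that the right-hand side is strictly positive. Assumption (H2) gives $Q_1,\bar Q_1,G_1,\bar R_1\ge 0$, so all terms are nonnegative. The uniform bound $R_1\ge\delta\mathbf{I}_{m_1}$ then gives a lower bound of $\lambda(1-\lambda)\delta\,\mathbb{E}\int_0^T|w|^2\drm s>0$. Hence $J_1$ is in fact uniformly convex in $u_1$, which is stronger than strict convexity and will also be useful later for existence of the minimizer.

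The only delicate point is ensuring that every term is finite and that the expansion is legitimate. This requires the state to be square integrable with $\sup$-norm bounds, which Lemma \ref{lem:4.1} guarantees, together with boundedness of the random coefficients $Q_1,\bar Q_1,R_1,\bar R_1,G_1$ from (H2). The mean-field terms cause no extra difficulty, because the expectations enter only through deterministic quadratic forms evaluated on $\mathbb{E}[\hat X]$ and $\bar w$, which are linear in $w$.
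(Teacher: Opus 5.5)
Your proof is correct and follows essentially the same route as the paper. Both arguments write the state affinely in $u_1$ through bounded linear maps (Lemma \ref{lem:4.1}), and both get coercivity of the quadratic part from $R_1\ge\delta\mathbf{I}_{m_1}$ together with nonnegativity of $Q_1,\bar Q_1,\bar R_1,G_1$. Your $\lambda(1-\lambda)$ identity just makes explicit the step the paper leaves implicit, namely passing from its bound $\langle\mathcal{Q}_{u_1}w,w\rangle\ge\delta\,\mathbb{E}\int_0^T|w|^2\drm s$ to strict (indeed uniform) convexity.
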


\begin{theorem}\label{Th:3.2}
	Suppose (H1) and (H2) hold. Then, for any fixed $u_2(\cdot)\in\mathcal{U}_2[0,T]$ and $x\in\mathbb{R}^n$, Problem (MFSOLQ-F) admits a unique optimal control $\tilde{u}_1(\cdot)$. Moreover, a control $\tilde{u}_1(\cdot)$ is optimal if and only if the adapted solution $(\tilde{X}(\cdot), \tilde{Y}(\cdot), \tilde{Z}(\cdot))$ to the following coupled forward-backward stochastic differential equation (FBSDE):
	\begin{equation}\label{eq:F1}
		\left\{
		\begin{aligned}
			\drm\tilde{X}(s) &= \left[ \cA_1 \tilde{X} + \cA_2 \bar{X} + \cB_1 \tilde{u}_1 + \cB_2 u_2+b\right] \mathrm{d}s  + \left[ \cC_1 \tilde{X} + \cC_2 \bar{X} + \cD_1 \tilde{u}_1 + \cD_2 u_2 +\sigma \right] \mathrm{d}W(s), \\
			\mathrm{d}\tilde{Y}(s) &= -\left[ \cA_1^\ast \tilde{Y} + \cC_1^\ast \tilde{Z} + Q_1 \tilde{X} + \mathbb{E}[\bar{Q}_1] \bar{X} + \mathbb{E}\left[ \cA_2^\ast \tilde{Y} + \cC_2^\ast \tilde{Z} \right]\right] \mathrm{d}s+ \tilde{Z} \mathrm{d}W(s), \\
			\tilde{X}(0) &= x, \quad \tilde{Y}(T) = G_1 \tilde{X}(T).
		\end{aligned}
		\right.
	\end{equation}
	satisfies the stationarity condition:
	\begin{equation}\label{eq:so}
		R_1  \tilde{u}_1  + \cB_1^\ast  \tilde{Y}  + \cD_1^\ast \tilde{Z} + \mathbb{E}[\bar{R}_1] \mathbb{E}[\tilde{u}_1] = 0.
	\end{equation}
\end{theorem}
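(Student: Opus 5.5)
Because $u_2(\cdot)$ is fixed, the map $u_1\mapsto J_1(u_1,u_2)$ is a quadratic functional on the Hilbert space $\mathcal{U}_1[0,T]$, and the plan is to treat it that way throughout. By linearity of \eqref{state1}, the state splits as $X=X^{0}+\mathcal{L}u_1$. Here $X^{0}$ is the state with $u_1=0$; it carries $x$, $u_2$, $b$ and $\sigma$. The map $\mathcal{L}$ is the bounded linear map sending $u_1$ to the solution of the homogeneous equation with zero initial state, and its boundedness follows from Lemma \ref{lem:4.1}. Substituting this into \eqref{cost0} gives
\[
J_1(u_1,u_2)=\langle \mathcal{M}u_1,u_1\rangle+2\langle \ell,u_1\rangle+c ,
\]
where $\mathcal{M}$ is self-adjoint. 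Under (H2) we have $R_1,\bar R_1\ge\delta\mathbf{I}$ and $Q_1,\bar Q_1,G_1\ge0$. Hence $\langle\mathcal{M}u_1,u_1\rangle\ge \delta\,\mathbb{E}\int_0^T|u_1|^2\,\drm s$, which is uniform convexity. This is Theorem \ref{Th:3.1}, and it can be invoked directly.

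\textbf{Existence and uniqueness.} The functional is continuous, uniformly convex and coercive on $\mathcal{U}_1[0,T]$. Its minimizer is therefore unique, and existence follows either from weak lower semicontinuity together with a bounded minimizing sequence, or from Lax--Milgram applied to $\mathcal{M}u_1=-\ell$. For a convex Gâteaux-differentiable functional, $\tilde u_1$ is optimal if and only if the directional derivative vanishes in every direction $v\in\mathcal{U}_1$. So the remaining task is to compute that derivative and express it through the adjoint pair.

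\textbf{Computing the derivative.} Fix a direction $v$ and let $X^v=\mathcal{L}v$ be the variational state. Then
\[
\tfrac12\tfrac{\drm}{\drm\varepsilon}J_1(\tilde u_1+\varepsilon v)\big|_{\varepsilon=0}
=\mathbb{E}\Big\{\int_0^T\big[\langle Q_1\tilde X,X^v\rangle+\langle\bar Q_1\bar X,\mathbb{E}X^v\rangle+\langle R_1\tilde u_1,v\rangle+\langle\bar R_1\bar{\tilde u}_1,\mathbb{E}v\rangle\big]\drm s+\langle G_1\tilde X(T),X^v(T)\rangle\Big\}.
\]
The mean-field terms simplify because $\bar X$ and $\mathbb{E}X^v$ are deterministic: $\mathbb{E}\langle\bar Q_1\bar X,\mathbb{E}X^v\rangle=\mathbb{E}\langle \mathbb{E}[\bar Q_1]\bar X, X^v\rangle$, and the control term $\langle\bar R_1\bar{\tilde u}_1,\mathbb{E}v\rangle$ reduces in the same way. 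This is precisely why $\mathbb{E}[\bar Q_1]$ and $\mathbb{E}[\bar R_1]$ appear in \eqref{eq:F1} and \eqref{eq:so}.

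Next, let $(\tilde Y,\tilde Z)$ solve the BSDE in \eqref{eq:F1}. This BSDE has the form \eqref{eq:BSDE1} with driver $Q_1\tilde X+\mathbb{E}[\bar Q_1]\bar X\in L^2_{\mathbb F}$, so it is well posed by Lemma \ref{lem:esYZ}. Apply Itô's formula to $\langle \tilde Y,X^v\rangle$ on $[0,T]$ and take expectations. The operator terms pair off through their adjoints, for example $\mathbb{E}\langle\tilde Y,\cA_1X^v\rangle=\mathbb{E}\langle\cA_1^\ast\tilde Y,X^v\rangle$. For the mean-field pieces one uses
\[
\mathbb{E}\langle \tilde Y,\cA_2\,\mathbb{E}X^v\rangle=\langle \mathbb{E}[\cA_2^\ast\tilde Y],\mathbb{E}X^v\rangle=\mathbb{E}\langle \mathbb{E}[\cA_2^\ast\tilde Y],X^v\rangle,
\]
and similarly for the $\cC_2$ terms. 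After these cancellations, the derivative reduces to
\[
\mathbb{E}\int_0^T\big\langle R_1\tilde u_1+\cB_1^\ast\tilde Y+\cD_1^\ast\tilde Z+\mathbb{E}[\bar R_1]\mathbb{E}[\tilde u_1],\,v\big\rangle\,\drm s .
\]
This vanishes for every $v$ if and only if \eqref{eq:so} holds, because the bracket is $\mathbb{F}$-adapted (taking its adapted projection if needed). That gives both directions of the characterization.

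\textbf{Main obstacle.} The delicate step is justifying the duality with operator-valued coefficients. One must interpret $\cA_i^\ast$, $\cC_i^\ast$ as adjoints in the $L^2_{\mathcal F_T}$ pairing, and $\cA_2^\ast,\cC_2^\ast$ as adjoints from $L^2_{\mathcal F_T}$ into $\mathbb{L}^2$. One must also ensure that $\cB_1^\ast\tilde Y$ and similar terms are adapted, or else replace them by their optional projections. The plan is to mirror \cite{wei2019linear}: define all adjoints pointwise in $t$ through $\mathbb{E}\langle\cdot,\cdot\rangle$, use (H1) to get square integrability, and verify the Itô product formula by localization together with the bounds of Lemmas \ref{lem:4.1} and \ref{lem:esYZ}.
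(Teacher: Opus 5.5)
Your proposal is correct and follows essentially the same route as the paper: uniqueness from the strict convexity of Theorem~\ref{Th:3.1}, the variational state, the reduction of the mean-field terms to $\mathbb{E}[\bar Q_1]\bar X$, $\mathbb{E}[\bar R_1]\mathbb{E}[\tilde u_1]$ and $\mathbb{E}[\cA_2^\ast\tilde Y+\cC_2^\ast\tilde Z]$, and It\^o's formula applied to $\langle \tilde Y, X^v\rangle$ to identify the derivative with the stationarity expression. The differences are cosmetic. You get existence from Lax--Milgram (or weak lower semicontinuity) rather than the Mazur-type argument the paper cites. You get sufficiency from convexity of a G\^ateaux-differentiable functional rather than the paper's explicit expansion $J_1(\tilde u_1+\epsilon v,u_2)-J_1(\tilde u_1,u_2)=\epsilon^2 I\ge 0$. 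Your remark on reading $\cB_1^\ast\tilde Y$ and $\cD_1^\ast\tilde Z$ as adapted (adjoints on $L^2_{\mathcal F_t}$) makes explicit a point the paper leaves implicit.
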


Here, and in what follows, we suppress the explicit dependence on the time variable $s$ for notational brevity whenever no confusion arises.

Since the parameters are stochastic operator-valued processes, they are not necessarily independent of \((X(\cdot),Y(\cdot),Z(\cdot))\).  Consequently, the decoupling of FBSDE \eqref{eq:F1} becomes challenging. To address this issue, we adopt a method inspired by \cite{xiong2025mean}, which transforms Problem (MFSOLQ-F) into a constrained control problem with constraints \(\bar{u}_1(\cdot) =\alpha_1(\cdot)\) and \(\bar{X}^{\bu_1}(\cdot) =\beta_1(\cdot)\), where \(\bm{\eta}_1(\cdot)=\{\alpha_1(\cdot),\beta_1(\cdot)\}\) are deterministic functions. This approach is motivated by the fact that, 
for fixed \(x\in\RR^n\) and \(u_2\in\cU_2\),  
\begin{equation*}
	\inf_{u_1 \in \mathcal{U}_1 } J_1( u_1(\cdot),u_2(\cdot)) = \inf_{(\alpha_1, \beta_1) \in (\mathbb{L}^2)^2} \inf_{u_1 \in \mathcal{U}_1 } \left\{ J_1( u_1(\cdot),u_2(\cdot)) : \bar{u}_1(\cdot) =\alpha_1(\cdot), \bar{X}^{\bu_1}(\cdot)  = \beta_1(\cdot) \right\}\footnotemark.
\end{equation*}\footnotetext{If \(\left\{ J_1(u_1(\cdot)) :\bar{u}_1(\cdot) =\alpha_1(\cdot), \bar{X}^{\bu_1}(\cdot)  = \beta_1(\cdot)  \right\}=\emptyset\), then \(\inf_{u_1\in\cU_1}J_1( u_1(\cdot),u_2(\cdot))=\infty\).}
where \(X^{\bu_1}(\cdot)\) denotes the state trajectory under the controls \(\bu_1=\{u_1,u_2\}\).  

We note that the first infimum is related to a constrained control problem, and the state equation of this problem is as follows: for \(s\in[0,T] \),
\begin{equation}\label{eq:st-cons}
	\left\{\begin{aligned}
		&\drm X(s)=\left[\cA_1 X  +\cA_2  \beta_1 
		+\cB_1  u_1  +\cB_2  u_2+b  \right]\drm s+\left[\cC_1  X  +\cC_2  \beta_1  +\cD_1  u_1  +\cD_2  u_2 +\sigma \right]\drm W(s),\\
		&X(0)=x,
	\end{aligned}\right.
\end{equation}
and the cost functional is
\begin{equation}\label{eq:cons}
	\begin{aligned}
		J^{\bm{\eta}_1}_1( u_1(\cdot),u_2(\cdot))=&\mathbb{E}\Big\{\int_0^T\left[\langle Q_1(s)X(s),X(s)\rangle+\langle\bar{Q}_1(s)
		\beta_1(s),\beta_1(s)\rangle+ \langle R_1(s)u_1(s),u_1(s)\rangle\right.\\
		&\left.\qquad\quad+\langle \bar{R}_1(s)\alpha_1(s),\alpha_1(s)\rangle\right]\drm s +\langle G_1 X(T),X(T)\rangle\Big\}.
	\end{aligned}
\end{equation}

The following lemma establishes the strict convexity of the cost functional $J^{\bm{\eta}_1}_1(u_1(\cdot),u_2(\cdot))$ with respect to the control variable $u_1(\cdot)$. The proof follows directly from Theorem \ref{Th:3.1} and is therefore omitted.

\begin{lemma}\label{lem:sc}
	Let (H1) and (H2) hold. For any fixed initial state $x\in\mathbb{R}^n$, leader's control $u_2(\cdot)\in\mathcal{U}_2[0,T]$, and expectation constraint $\bm{\eta}_1\in(\mathbb{L}^2)^2$, the cost functional $J^{\bm{\eta}_1}_1(u_1(\cdot),u_2(\cdot))$ is strictly convex with respect to $u_1(\cdot)$.
\end{lemma}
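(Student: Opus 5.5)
Since $\bm{\eta}_1=(\alpha_1,\beta_1)$ is deterministic and fixed, the state equation \eqref{eq:st-cons} is an affine controlled SDE in $u_1$: the term $\cA_2\beta_1$ (resp.\ $\cC_2\beta_1$), together with $\cB_2u_2+b$ (resp.\ $\cD_2u_2+\sigma$), enters only as a given inhomogeneity. The plan is to split the state by linearity and then read strict convexity off the quadratic part of the cost, exactly as for Theorem \ref{Th:3.1}.

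\emph{Step 1 (decomposition).} Fix $x,u_2,\bm{\eta}_1$ and write $X^{u_1}=X^0+\hat X^{u_1}$. Here $X^0$ solves \eqref{eq:st-cons} with $u_1=0$. The process $\hat X^{u_1}$ solves the homogeneous equation
\[
\drm \hat X=\left[\cA_1\hat X+\cB_1u_1\right]\drm s+\left[\cC_1\hat X+\cD_1u_1\right]\drm W(s),\qquad \hat X(0)=0.
\]
By Lemma \ref{lem:4.1}, applied with $\cA_2=\cC_2=0$ and with the inhomogeneous terms absorbed, both equations are well posed. Moreover $u_1\mapsto \hat X^{u_1}$ is a bounded linear map into $L^{2,c}_{\FF}(\RR^n)$.

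\emph{Step 2 (convexity identity).} For $u_1,v_1\in\cU_1$ and $\lambda\in(0,1)$, the terms $\langle\bar Q_1\beta_1,\beta_1\rangle$ and $\langle\bar R_1\alpha_1,\alpha_1\rangle$ do not depend on $u_1$. They therefore cancel in
\[
\lambda J^{\bm{\eta}_1}_1(u_1,u_2)+(1-\lambda)J^{\bm{\eta}_1}_1(v_1,u_2)-J^{\bm{\eta}_1}_1(\lambda u_1+(1-\lambda)v_1,u_2).
\]
The state is affine in the control, so $X^{\lambda u_1+(1-\lambda)v_1}=\lambda X^{u_1}+(1-\lambda)X^{v_1}$. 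Using the elementary identity $\lambda q(a)+(1-\lambda)q(b)-q(\lambda a+(1-\lambda)b)=\lambda(1-\lambda)q(a-b)$ for a quadratic form $q$, the difference above equals
\[
\lambda(1-\lambda)\,\EE\Big\{\int_0^T\left[\langle Q_1\hat X^{u_1-v_1},\hat X^{u_1-v_1}\rangle+\langle R_1(u_1-v_1),u_1-v_1\rangle\right]\drm s+\langle G_1\hat X^{u_1-v_1}(T),\hat X^{u_1-v_1}(T)\rangle\Big\}.
\]

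\emph{Step 3 (positivity).} By (H2), $Q_1\ge0$ and $G_1\ge0$, while $R_1\ge\delta\mathbf I_{m_1}$. Hence the expression in Step 2 is at least $\lambda(1-\lambda)\,\delta\,\EE\int_0^T|u_1-v_1|^2\drm s$, which is strictly positive whenever $u_1\ne v_1$ in $\cU_1$. This gives strict (indeed uniform) convexity.

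The only point needing care is Step 2. In Theorem \ref{Th:3.1} strict positivity might have relied partly on the mean-field terms $\bar Q_1,\bar R_1$, and these are now frozen constants. The plan therefore deliberately uses only $R_1\ge\delta\mathbf I_{m_1}$ for positivity. With that, the argument is a direct transcription of the proof of Theorem \ref{Th:3.1} with $\bar X,\bar u_1$ replaced by $\beta_1,\alpha_1$.
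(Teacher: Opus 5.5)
Your proposal is correct and takes essentially the paper's route. The paper omits this proof as a direct consequence of the argument for Theorem~\ref{Th:3.1}: write the state affinely in $u_1$, then bound the quadratic part of the cost below by $\delta\,\EE\int_0^T|u_1|^2\drm s$ using $R_1\ge\delta\mathbf{I}_{m_1}$, with all other contributions nonnegative. Your observation that the $\bar Q_1$ and $\bar R_1$ terms become $u_1$-independent constants here (so only $R_1$ is needed) is exactly why that transcription works, and your convex-combination identity simply makes explicit the passage from coercivity of the quadratic part to strict convexity.
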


Building upon this strict convexity, we can directly establish the unique solvability of the constrained optimal control problem, as presented in the following lemma. Since the algebraic arguments closely parallel the affine subspace and coercivity techniques utilized in \cite{xiong2025mean}, the detailed proof is omitted here.

\begin{lemma}\label{lem:F1}
	For any fixed $\bm{\eta}_1 = (\alpha_1, \beta_1) \in (\mathbb{L}^2)^2$, there exists a unique $\tilde{u}_1^{\bm{\eta}_1}(\cdot) \in \mathcal{U}_1[0,T]$ satisfying the expectation constraints $\mathbb{E}[\tilde{u}_1^{\bm{\eta}_1}(\cdot)] = \alpha_1(\cdot)$ and $\mathbb{E}[X^{\tilde{u}_1^{\bm{\eta}_1}}(\cdot)] = \beta_1(\cdot)$, such that
	\[
	J_1^{\bm{\eta}_1}\big( \tilde{u}_1^{\bm{\eta}_1}(\cdot), u_2(\cdot) \big) = \inf_{u_1 \in \mathcal{U}_1} \big\{ J_1^{\bm{\eta}_1}(u_1(\cdot), u_2(\cdot)) \mid \mathbb{E}[u_1(\cdot)] = \alpha_1(\cdot) \text{ and } \mathbb{E}[X^{u_1}(\cdot)] = \beta_1(\cdot) \big\}.
	\]
	Here, $X^{u_1}(\cdot)$ denotes the state trajectory generated by the control $u_1(\cdot)$ for the fixed leader's control $u_2(\cdot) \in \mathcal{U}_2[0,T]$.
\end{lemma}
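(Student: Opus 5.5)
Fix $x$, $u_2(\cdot)$ and $\bm{\eta}_1=(\alpha_1,\beta_1)$, and write $\mathcal{K}^{\bm{\eta}_1}=\{u_1\in\mathcal{U}_1:\ \mathbb{E}[u_1]=\alpha_1,\ \mathbb{E}[X^{u_1}]=\beta_1\}$, where $X^{u_1}$ solves \eqref{eq:st-cons}. The plan is to minimize the strictly convex, coercive functional $J_1^{\bm{\eta}_1}(\cdot,u_2)$ over the closed affine set $\mathcal{K}^{\bm{\eta}_1}$ in the Hilbert space $\mathcal{U}_1=L^2_{\mathbb{F}}(\mathbb{R}^{m_1})$, and then apply the direct method. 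Throughout I assume $\mathcal{K}^{\bm{\eta}_1}\neq\emptyset$; otherwise the infimum is $+\infty$, by the convention in the footnote, and there is nothing to prove.

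\textbf{Step 1 (structure of the constraint set).} In \eqref{eq:st-cons} the mean term is frozen at the deterministic $\beta_1$, so $u_1\mapsto X^{u_1}$ is affine and continuous from $\mathcal{U}_1$ into $L^{2,c}_{\mathbb{F}}(\mathbb{R}^n)$. Indeed, $X^{u_1}=X^0+\mathcal{T}u_1$ with $\mathcal{T}$ bounded linear, by the estimate \eqref{eq:ieX} of Lemma~\ref{lem:4.1}, where $\beta_1$ is absorbed into the inhomogeneous term $b+\mathcal{A}_2\beta_1$, which lies in $L^2_{\mathbb{F}}$ by (H1). Hence $u_1\mapsto(\mathbb{E}[u_1],\mathbb{E}[X^{u_1}])\in(\mathbb{L}^2)^2$ is affine and continuous, and $\mathcal{K}^{\bm{\eta}_1}$, being the preimage of a point, is a closed affine subspace. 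It is therefore convex and weakly closed.

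\textbf{Step 2 (coercivity and lower semicontinuity).} On $\mathcal{K}^{\bm{\eta}_1}$ the terms $\langle\bar{Q}_1\beta_1,\beta_1\rangle$ and $\langle\bar{R}_1\alpha_1,\alpha_1\rangle$ are constants. The remaining terms satisfy $Q_1,G_1\ge0$ and $R_1\ge\delta\mathbf{I}$ by (H2), so
\[
J_1^{\bm{\eta}_1}(u_1,u_2)\ \ge\ \delta\,\mathbb{E}\int_0^T|u_1|^2\,\mathrm{d}s .
\]
Thus $J_1^{\bm{\eta}_1}$ is coercive. It is also a continuous quadratic form in $u_1$ by Step 1, and it is convex by Lemma~\ref{lem:sc}; a continuous convex functional is weakly lower semicontinuous.

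\textbf{Step 3 (existence and uniqueness).} Take a minimizing sequence $u_1^k\in\mathcal{K}^{\bm{\eta}_1}$. By coercivity it is bounded in $\mathcal{U}_1$, so a subsequence converges weakly to some $\tilde{u}_1^{\bm{\eta}_1}$. This limit lies in $\mathcal{K}^{\bm{\eta}_1}$ by weak closedness, and weak lower semicontinuity shows that it attains the infimum. Uniqueness follows from the strict convexity in Lemma~\ref{lem:sc}: if $u,v\in\mathcal{K}^{\bm{\eta}_1}$ were two distinct minimizers, then $(u+v)/2\in\mathcal{K}^{\bm{\eta}_1}$ by convexity, and it would give a strictly smaller value, a contradiction.

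\textbf{Main obstacle.} The delicate point is Step 1, in particular the claim that the expectation map is well defined and continuous into $\mathbb{L}^2$ even though the operators are random and general. By Jensen's inequality, $|\mathbb{E}[X(t)]|^2\le\mathbb{E}|X(t)|^2$, so continuity reduces to the $L^{2,c}$ estimate of Lemma~\ref{lem:4.1}. One must also make sure that the strict convexity in Lemma~\ref{lem:sc} refers to the quadratic part in $u_1$, namely $\mathbb{E}\int\langle R_1u_1,u_1\rangle\,\mathrm{d}s$ plus nonnegative terms in $\mathcal{T}u_1$. This holds because $R_1\ge\delta\mathbf{I}$, and it is exactly what the uniqueness argument in Step 3 uses.
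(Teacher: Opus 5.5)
Your argument is correct and follows essentially the route the paper indicates. The paper omits the proof and cites the closed-affine-subspace and coercivity argument of Xiong and Xu: the constraint set is a closed affine subspace of $\mathcal{U}_1$, $J_1^{\bm{\eta}_1}$ is continuous, convex and coercive because $R_1\ge\delta\mathbf{I}$, the direct method gives existence, and strict convexity gives uniqueness.

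One correction: when $\mathcal{K}^{\bm{\eta}_1}=\emptyset$ the lemma is not vacuous but false as literally stated. For instance, $t\mapsto\mathbb{E}[X^{u_1}(t)]$ is continuous with value $x$ at $t=0$, so a constant $\beta_1\equiv c\neq x$ admits no feasible control. Nonemptiness of $\mathcal{K}^{\bm{\eta}_1}$ should therefore be stated as a hypothesis, not dismissed with ``nothing to prove''.
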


Then, we introduce the Extended Lagrange multipliers  \(\bm{\lambda}_1=\{\lambda_1(\cdot),\ \ti{\la}_1(\cdot)\} \in (\mathbb{L}^2)^2\) to relax the constraint cost functional given in \eqref{eq:cons}. In this case, the cost functional is as follows  
\begin{equation}\label{eq:cos-ELM1}
	\begin{aligned}	\hat{J}^{\bm{\eta}_1}_1( u_1(\cdot),u_2(\cdot),\bm{\lambda}_1(\cdot))\triangleq & J^{\bm{\eta}_1}_1( u_1(\cdot),u_2(\cdot))
		+2\langle \lambda_1,\mathbb{E}u_1-\alpha_1\rangle_{\mathbb{L}^2}
		+2\langle \ti{\la}_1, \mathbb{E}X^{\bu_1}-\beta_1\rangle_{\mathbb{L}^2}
	\end{aligned}
\end{equation} 
We note that the strict convexity of the cost functional \(\hat{J}^{\bm{\eta}_1}_1( u_1(\cdot),u_2(\cdot),\bm{\lambda}_1(\cdot))\) with respect to the control variable \(u_1(\cdot)\) follows directly  from Lemma \ref{lem:sc}. Furthermore, the concavity of \(\hat{J}^{\bm{\eta}_1}_1( u_1(\cdot),u_2(\cdot),\bm{\lambda}_1(\cdot))\)  with respect to \(\lambda_1\) and \(\ti{\la}_1\) can be established directly by using standard convex analysis arguments. Given the convexity of the sets   \(\cU_1\) and \(\LL^2\) , coupled with the differentiability of \(\hat{J}^{\bm{\eta}_1}_1( u_1(\cdot),u_2(\cdot),\bm{\lambda}_1(\cdot))\) with respect to \(u_1\), \(\lambda_1\), and \(\ti{\la}_1\) respectively, we invoke  Propositions 2.156 and 2.157 from \cite{bonnans2013perturbation} to establish the following min-max duality equality
\[\sup_{(\lambda_1,\ti{\la}_1)\in(\LL^2)^2}\inf_{u_1(\cdot)\in\cU_1} \hat{J}^{\bm{\eta}_1}_1( u_1(\cdot),u_2(\cdot),\bm{\lambda}_1(\cdot))= \inf_{u_1(\cdot)\in\cU_1} \sup_{(\lambda_1,\ti{\la}_1)\in(\LL^2)^2}\hat{J}^{\bm{\eta}_1}_1( u_1(\cdot),u_2(\cdot),\bm{\lambda}_1(\cdot)).\]
Consequently, it is not necessary to prescribe an order for optimizing these variables; the min--max equality ensures that the choice of $u_1$ and $(\lambda_1, \tilde{\lambda}_1)$ is interchangeable.

Therefore, we proceed to solving a control problem with respect to \(u_1(\cdot)\), where \(\bea_1\in(\LL^2)^2\) and \(\bl_1\in(\LL^2)^2\)  are fixed. Accordingly, we formulate this problem as the Problem (F-1) in terms of the Fr\'{e}chet derivative.

\noindent\textbf{Problem (F-1):} For any \(x\in\RR^n\), \(u_2\in\cU_2\), \(\bea_1\in(\LL^2)^2\), and \(\bl_1\in(\LL^2)^2\) fixed. Find a control $\tu^{\bm{\eta}_1,\bm{\lambda}_1}(\cdot) \in \mathcal{U}_1 $ 
such that
\begin{equation*}
	\begin{aligned}
		&D_{u_1} \hat{J}^{\bm{\eta}_1}_1( \tu^{\bm{\eta}_1,\bm{\lambda}_1}(\cdot),u_2(\cdot),\bm{\lambda}_1(\cdot) )=0,
	\end{aligned}
\end{equation*}
Here, \(D_{u_1}\hat{J}^{\bm{\eta}_1}_1( \tu^{\bm{\eta}_1,\bm{\lambda}_1}(\cdot),u_2(\cdot),\bm{\lambda}^{\bm{\eta}_1}_1(\cdot))\) denotes the partial derivative of \(\hat{J}_1^{\bm{\eta}_1}(\cdot)\) with respect to \(u_1(\cdot)\), i.e. for any \(v(\cdot)\in\cU_1\),
\begin{equation}\label{eq:D}
	\begin{aligned}
		& < D_{u_1}\hat{J}_1^{\bm{\eta}_1}( u_1(\cdot),u_2(\cdot),\bm{\lambda_1}^{\bm{\eta}_1}(\cdot)),v(\cdot)>_{\cU_1}\\
		&\equiv\lim_{\epsilon\to 0^+}\frac{\hat{J}_1^{\bm{\eta}_1}( u_1(\cdot)+\epsilon v(\cdot),u_2(\cdot),\bm{\lambda}^{\bm{\eta}_1}_1(\cdot)))-\hat{J}_1^{\bm{\eta}_1}( u_1(\cdot),u_2(\cdot),\bm{\lambda}^{\bm{\eta}_1}_1(\cdot)))}{\epsilon}
	\end{aligned}
\end{equation}

We now state the following theorem, which establishes the stochastic maximum principle for Problem (F-1). Since the proof follows from  standard variational arguments analogous to those used in Theorem \ref{Th:3.2}, the detailed derivation is omitted.
\begin{theorem}\label{th:3.4}
	Let (H1) and (H2) hold. Then for \(x\in\RR^n\),  \(u_2\in\cU_2 \)  and \(\bm{\eta}_1\), \(\bm{\lambda}_1\in(\LL^2)^2\)  fixed, there exists a unique \(\tilde{\tilde{u}}_1^{\bm{\eta}_1,\bm{\lambda_1}}\in\cU_1\) such that \(D_{u_1}\hat{J}_1^{\bm{\eta}_1}=0\). Moreover, \(\tilde{\tilde{u}}_1^{\bm{\eta}_1,\bm{\lambda}_1}(\cdot)\) is optimal if and only if the solution
	\((X^{\bm{\eta}_1,\bm{\lambda}_1}(\cdot),
	Y^{\bm{\eta}_1,\bm{\lambda}_1}(\cdot),
	Z^{\bm{\eta}_1,\bm{\lambda}_1}(\cdot))\)
	to the following FBSDE: for \(\forall s\in[0,T]\),
	\begin{equation}\label{eq:op}
		\left\{	\begin{aligned}
			\drm X^{\bm{\eta}_1,\bm{\lambda}_1}(s)=&\left[
			\cA_1 X^{\bm{\eta}_1,\bm{\lambda}_1} +\cA_2 \beta_1 +\cB_1 \tilde{\tilde{u}}_{1}^{\bm{\eta}_1,\bm{\lambda}_1} 
			+\cB_2 u_2 +b\right]\drm s\\
			& \quad+[\cC_1 X^{\bm{\eta}_1,\bm{\lambda}_1} +\cC_2 \beta_1
			+\cD_1 \tilde{\tilde{u}}_{1}^{\bm{\eta}_1,\bm{\lambda} }+\cD_2 u_2 +\sigma]\drm W(s),\\		
			\drm Y^{\bm{\eta}_1,\bm{\lambda}_1}(s)=&-\left[\cA_1^{\top} Y^{\bm{\eta}_1,\bm{\lambda}_1} +\cC_1^{\top} Z^{\bm{\eta}_1,\bm{\lambda}_1} 		+Q_1 X^{\bm{\eta}_1,\bm{\lambda}_1} 
			+\ti{\la}_1 \right]\drm s
			+Z^{\bm{\eta}_1,\bm{\lambda}_1} \drm W(s),\\
			X^{\bm{\eta}_1,\bm{\lambda}_1}(0)=&x,\qquad Y^{\bm{\eta}_1,\bm{\lambda}_1}(T)=G_1X^{\bm{\eta}_1,\bm{\lambda}_1}(T).
		\end{aligned}\right.
	\end{equation}	
	satisfies the following stationary condition 
	\begin{align}\label{eq:OC1}
		R_1 \tilde{\tilde{u}}_{1}^{\bm{\eta}_1,\bm{\lambda_1}} +\cB_1^{\top} Y^{\bm{\eta}_1,\bm{\lambda}_1}  +\cD_1^{\top} Z^{\bm{\eta}_1,\bm{\lambda}_1} +\lambda_1 =0.
	\end{align}
	
\end{theorem}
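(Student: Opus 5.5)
The plan is to treat Problem (F-1) for fixed $x$, $u_2$, $\bm{\eta}_1$ and $\bm{\lambda}_1$ as an unconstrained stochastic LQ problem with operator-valued coefficients. For fixed $\bm{\eta}_1$, the state equation \eqref{eq:st-cons} contains no mean-field term: $\cA_2\beta_1$, $\cC_2\beta_1$, $\cB_2u_2$ and $\cD_2u_2$ enter only as known inhomogeneities, together with $b$ and $\sigma$. Hence $u_1\mapsto X^{u_1}$ is affine, say $X^{u_1}=X^0+\mathcal{L}u_1$ with $\mathcal{L}$ bounded linear. Lemma~\ref{lem:4.1} applied to the difference of two states gives $\mathbb{E}\sup|\mathcal{L}u_1|^2\le K\|u_1\|^2_{\cU_1}$.

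Substituting this into $\hat J_1^{\bm{\eta}_1}$ yields a quadratic functional
\[
\hat J_1^{\bm{\eta}_1}(u_1)=\langle \mathcal{M}u_1,u_1\rangle_{\cU_1}+2\langle \mathcal{N},u_1\rangle_{\cU_1}+c .
\]
The Lagrangian terms $2\langle\lambda_1,\mathbb{E}u_1\rangle_{\LL^2}+2\langle\ti\la_1,\mathbb{E}\mathcal{L}u_1\rangle_{\LL^2}$ are linear in $u_1$, so they contribute only to $\mathcal{N}$. For existence and uniqueness, I note that $\mathcal{M}\ge\delta I$: by (H2), $R_1\ge\delta I$, and the terms coming from $Q_1$ and $G_1$ are nonnegative. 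This is the uniform convexity behind Lemma~\ref{lem:sc}. Consequently $D_{u_1}\hat J_1^{\bm{\eta}_1}=2(\mathcal{M}u_1+\mathcal{N})$ has the unique zero $\tu_1^{\bm{\eta}_1,\bm{\lambda}_1}=-\mathcal{M}^{-1}\mathcal{N}$, by Lax--Milgram or strict convexity plus coercivity. Convexity also makes stationarity equivalent to optimality, which gives the ``if and only if''.

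The representation of the derivative through \eqref{eq:op}--\eqref{eq:OC1} comes from a variational computation. I perturb $u_1^\epsilon=\tu_1+\epsilon v$ and write $X^\epsilon=X+\epsilon X_v$, where $X_v$ solves the homogeneous variational equation
\[
\drm X_v=[\cA_1X_v+\cB_1v]\drm s+[\cC_1X_v+\cD_1v]\drm W,\qquad X_v(0)=0 .
\]
Expanding the cost, the quadratic remainder is $O(\epsilon^2)$, so
\[
\tfrac12\langle D_{u_1}\hat J,v\rangle=\mathbb{E}\Big[\int_0^T\big(\langle Q_1X,X_v\rangle+\langle R_1\tu_1,v\rangle+\langle\lambda_1,v\rangle+\langle\ti\la_1,X_v\rangle\big)\drm s+\langle G_1X(T),X_v(T)\rangle\Big].
\]
The deterministic multipliers pair through $\mathbb{E}$, since $\langle\lambda_1,\mathbb{E}v\rangle_{\LL^2}=\mathbb{E}\int\langle\lambda_1,v\rangle$.

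Next I introduce $(Y,Z)$ solving the adjoint BSDE in \eqref{eq:op}, with driver $\cA_1^\ast Y+\cC_1^\ast Z+Q_1X+\ti\la_1$. This is a special case of \eqref{eq:BSDE1} with $\cA_2=\cC_2=0$, and its inhomogeneity $Q_1X+\ti\la_1$ lies in $L^2_\FF$. Lemma~\ref{lem:esYZ} therefore gives a unique solution for each given $X$. Applying Itô's formula to $\langle Y,X_v\rangle$ on $[0,T]$ and taking expectations, the terms involving $X_v$ cancel by the duality of the adjoint operators. Here I use that $\cA_1^\ast$ and $\cC_1^\ast$ are defined as adjoints in $L^2_{\mathcal{F}_T}$, so that $\mathbb{E}\langle Y,\cA_1X_v\rangle=\mathbb{E}\langle\cA_1^\ast Y,X_v\rangle$ for each $s$. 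The stochastic integrals are true martingales by the $L^{2,c}$ estimates. The result is
\[
\tfrac12\langle D_{u_1}\hat J,v\rangle=\mathbb{E}\int_0^T\langle R_1\tu_1+\cB_1^\ast Y+\cD_1^\ast Z+\lambda_1,\,v\rangle\drm s .
\]
Since $v\in\cU_1$ is arbitrary, stationarity is equivalent to \eqref{eq:OC1}, and the optimal quadruple solves the coupled FBSDE \eqref{eq:op}. Conversely, any solution of \eqref{eq:op}--\eqref{eq:OC1} makes the derivative vanish and is therefore the unique minimizer.

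The main obstacle is the duality step. With operator-valued, adapted coefficients, the adjoint $\cA_1^\ast(s)$ need not preserve $\mathcal{F}_s$-measurability. One must check that the integrand $\cB_1^\ast Y+\cD_1^\ast Z$ can be replaced by its optional projection, or is already adapted, so that \eqref{eq:OC1} holds as an identity in $\cU_1$. I would handle this as in \cite[Proposition 2.6]{wei2019linear}, interpreting the adjoints within the class $\mathcal{L}^\infty_\FF$ restricted to $L^2_{\mathcal{F}_s}$. I would also verify that Itô's product rule applies to the pairing in the expectation sense under (H1).
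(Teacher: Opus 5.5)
Your proposal is correct and follows essentially the paper's route: the paper omits this proof and refers to the variational argument of Theorem~\ref{Th:3.2}, which is the same perturbation $\tilde{\tilde u}_1+\epsilon v$, Itô duality on $\langle Y,X_v\rangle$, and convexity for sufficiency that you carry out. Your only deviation is minor: you get existence and uniqueness of the stationary point from the coercive quadratic form $\mathcal{M}\ge\delta I$ instead of the Mazur-type argument. The adaptedness caveat you raise about $\cB_1^\ast Y+\cD_1^\ast Z$ is a point the paper passes over silently.
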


Based on Assumption (H2),  and by substituting the equation of \(\ti{\ti{u}}_1^{\bm{\eta}_1,\bm{\lambda_1}}\) given in the above \eqref{eq:OC1} into FBSDE \eqref{eq:op}, we have, for all \(s\in[0,T]\),
\begin{equation}\label{eq:so-u}
	\left\{	\begin{aligned}
		\drm X^{\bea_1,\bl_1} (s)=&\left[\cA_1X^{\bea_1,\bl_1}
		+\cA_2\beta_1-\cB_1R_1^{-1}(\cB_1^{\top}Y^{\bea_1,\bl_1}
		+\cD_1^{\top}Z^{\bea_1,\bl_1}+\lambda_1)
		+\cB_2u_2 +b\right]\drm s\\
		&+[\cC_1X^{\bea_1,\bl_1}+\cC_2\beta_1-\cD_1R_1^{-1}(\cB_1^{\top}Y^{\bea_1,\bl_1}+\cD_1^{\top}Z^{\bea_1,\bl_1}+\lambda_1)+\cD_2u_2 +\sigma]\drm W (s),\\			
		\drm Y^{\bea_1,\bl_1} (s)=&-[\cA_1^{\top}Y^{\bea_1,\bl_1}	+\cC_1^{\top}Z^{\bea_1,\bl_1}+Q_1X^{\bea_1,\bl_1}+\ti{\la}_1]\drm s
		+Z^{\bea_1,\bl_1}\drm W (s),\\
		X^{\bea_1,\bl_1}(0)=&x,\qquad Y^{\bea_1,\bl_1}(T)=G_1X^{\bea_1,\bl_1}(T).
	\end{aligned}\right.
\end{equation}

Thus, the unique solvability of FBSDE \eqref{eq:so-u} is directly derived from Theorem \ref{th:3.4}.
\begin{lemma}\label{lem:3.6}
	Suppose (H1) and (H2) hold. Then, for any \(x\in\RR^n\), \(u_2\in\cU_2\), and \((\bea_1,\bl_1)\in(\LL^2)^4\), the coupled system \eqref{eq:so-u} has a unique adapted solution \((X^{\bea_1,\bl_1}(\cdot),Y^{\bea_1,\bl_1}(\cdot),Z^{\bea_1,\bl_1}(\cdot))\in (L^{2,c}_\FF(\RR^n))^2\times L^2_\FF(\RR^n)\).
\end{lemma}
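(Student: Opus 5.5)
The plan is to read off the unique solvability of \eqref{eq:so-u} from the equivalence in Theorem \ref{th:3.4}. Both existence and uniqueness are transferred through the stationarity condition \eqref{eq:OC1}, which by (H2) can be inverted as $\tilde{\tilde{u}}_1 = -R_1^{-1}(\cB_1^{\top}Y+\cD_1^{\top}Z+\lambda_1)$, with $R_1^{-1}\in L^\infty_\FF$ because $R_1\ge\delta\mathbf{I}_{m_1}$.

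\textbf{Existence.} Fix $x$, $u_2$, $\bea_1$, $\bl_1$. Theorem \ref{th:3.4} gives a unique $\tilde{\tilde{u}}_1^{\bea_1,\bl_1}\in\cU_1$ with $D_{u_1}\hat J^{\bea_1}_1=0$. Let $X$ solve the forward equation of \eqref{eq:op} under this control; Lemma \ref{lem:4.1} applies with $\beta_1$ viewed as a deterministic element of $\LL^2$, giving $X\in L^{2,c}_\FF(\RR^n)$. Let $(Y,Z)$ solve the backward equation of \eqref{eq:op}, which is linear with terminal value $G_1X(T)\in L^2_{\cF_T}$ and source $Q_1X+\ti\la_1\in L^2_\FF$. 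This is the case of Lemma \ref{lem:esYZ} without mean-field adjoint terms, so $(Y,Z)\in L^{2,c}_\FF\times L^2_\FF$. By the ``only if'' part of Theorem \ref{th:3.4}, the triple satisfies \eqref{eq:OC1}. Substituting the inverted control into \eqref{eq:op} then yields exactly \eqref{eq:so-u}, so $(X,Y,Z)$ is an adapted solution in the stated spaces.

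\textbf{Uniqueness.} Let $(X',Y',Z')$ be any adapted solution of \eqref{eq:so-u} in the stated spaces, and define $u_1':=-R_1^{-1}(\cB_1^{\top}Y'+\cD_1^{\top}Z'+\lambda_1)$. By (H1) and boundedness of $R_1^{-1}$, $u_1'\in\cU_1$. By construction, $(X',Y',Z')$ solves \eqref{eq:op} with control $u_1'$ and satisfies \eqref{eq:OC1}. The ``if'' direction of Theorem \ref{th:3.4} then makes $u_1'$ optimal, i.e.\ $D_{u_1}\hat J^{\bea_1}_1(u_1')=0$, and uniqueness of the stationary point forces $u_1'=\tilde{\tilde{u}}_1^{\bea_1,\bl_1}$. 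With the control now fixed, $X'=X$ by uniqueness in Lemma \ref{lem:4.1}, and then $(Y',Z')=(Y,Z)$ by uniqueness for the linear BSDE.

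\textbf{Main obstacle.} The only delicate point is the ``if'' direction in the uniqueness step. It is justified because $\hat J^{\bea_1}_1$ is a strictly convex quadratic in $u_1$ (Lemma \ref{lem:sc}); its directional derivative \eqref{eq:D} is computed by the duality between the variational state and $(Y',Z')$, which gives
\[
\langle D_{u_1}\hat J^{\bea_1}_1(u_1'),v\rangle=2\,\EE\int_0^T\langle R_1u_1'+\cB_1^{\top}Y'+\cD_1^{\top}Z'+\lambda_1,v\rangle\,\drm s .
\]
This derivative vanishes whenever \eqref{eq:OC1} holds, for any solution of \eqref{eq:op}, not only the one constructed in the existence step.
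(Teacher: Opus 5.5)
Your proposal is correct and follows the paper's route. The paper obtains \eqref{eq:so-u} by substituting the inverted stationarity condition \eqref{eq:OC1} into \eqref{eq:op}, and then simply asserts that unique solvability is directly derived from Theorem~\ref{th:3.4}. You supply the details the paper leaves implicit, especially the uniqueness direction: you turn an arbitrary solution of \eqref{eq:so-u} back into a stationary control via the explicit directional-derivative formula, then invoke uniqueness of the stationary point and of the decoupled SDE and BSDE.
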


Combining Lemma~\ref{lem:3.6} with the optimality expression~\eqref{eq:OC1}, the solution to Problem (F-1) is fully characterized.   We  next formulate Problem (F-2), which seeks to optimize over the Lagrange multipliers $(\lambda_1, \tilde{\lambda}_1)$. With the optimal control $\tu_1^{\bea_1,\bl_1}(\cdot)$ now explicitly determined, the cost functional $\hat{J}$ reduces to a functional depending  on $(\lambda_1, \tilde{\lambda}_1)$ and the state trajectories $(X^{\bm{\eta}_1,\bm{\lambda}_1},Y^{\bm{\eta}_1,\bm{\lambda}_1} , Z^{\bm{\eta}_1,\bm{\lambda}_1})$
\begin{equation}\label{eq:cost2}
	\begin{aligned}	
		&\hat{\hat{J}}^{\bea_1}(\la_1(\cdot),\ti{\la}_1(\cdot))\equiv \hat{J}^{\bm{\eta}_1}(\tu^{\bea_1,\bl_1}_1(\cdot),u_2(\cdot),\bm{\lambda}_1(\cdot))\\
		&=\mathbb{E}\Big\{\int_0^T\left[\langle Q_1 X^{\bm{\eta}_1,\bm{\lambda}_1} ,X^{\bm{\eta}_1,\bm{\lambda}_1} \rangle+\langle\bar{Q}_1 \beta_1 ,\beta_1 \rangle + \langle \cB_1^{\top} Y^{\bm{\eta}_1,\bm{\lambda}_1}  +\cD_1^{\top} Z^{\bm{\eta}_1,\bm{\lambda}_1},R^{-1}_1 \left[\cB_1^{\top} Y^{\bm{\eta}_1,\bm{\lambda}_1}  +\cD_1^{\top} Z^{\bm{\eta}_1,\bm{\lambda}_1}  \right]\rangle\right.\\
		&\left.\qquad\quad+\langle \bar{R}_1 \alpha_1 ,\alpha_1 \rangle-\langle \lambda_1, R^{-1}_1\lambda_1+2\alpha_1\rangle+2\langle \ti{\la}_1,X^{\bm{\eta}_1,\bm{\lambda}_1}-\beta_1\rangle\right]\drm s+\langle G_1X^{\bm{\eta}_1,\bm{\lambda}_1}(T),X^{\bm{\eta}_1,\bm{\lambda}_1}(T)\rangle\Big\}.
	\end{aligned}
\end{equation} 

We then formulate this part as Problem (F-2) below in terms of the Fr\'{e}chet derivative.

\noindent\textbf{Problem (F-2):} For any $x\in\RR^n$,  $u_2\in\cU_2$, and \(\bea_1(\cdot)\in(\LL^2)^2\) fixed,  find optimal ELMs \(\bl_1^\ast=(\lambda_1^\ast,\ti{\la}_1^\ast)\in(\LL^2)^2\) such that
\[ D_{\lambda_1} \hat{\hat{J}}^{\bea_1}(\la_1(\cdot),\ti{\la}_1(\cdot))=0,\qquad  D_{\ti{\la}_1}\hat{\hat{J}}^{\bea_1}(\la_1(\cdot),\ti{\la}_1(\cdot))=0.
\]
Here,  the differential operators
$D_{\lambda_1}\hat{\hat{J}}^{\bea_1}(\la_1(\cdot),\ti{\la}_1(\cdot))=0$, and $ D_{\ti{\la}_1}\hat{\hat{J}}^{\bea_1}(\la_1(\cdot),\ti{\la}_1(\cdot))=0$ are defined analogously to that of the differential operator given in \eqref{eq:D}.

The following lemma shows that the expectation constraints \eqref{eq:abco} are automatically recovered at the optimal pair $(\la_1^*,\ti{\la}_1^*)$.
\begin{lemma}\label{lem:3.7}
	Let (H1) and (H2) hold. For any fixed \(x\in\RR^n\), \(u_2(\cdot)\in\cU_2\), and \(\bea_1(\cdot)=(\alpha_1(\cdot),\beta_1(\cdot))\in(\LL^2)^2\), assume that \(\bl_1^\ast=(\lambda_1^\ast,\ti{\la}_1^\ast)\in(\LL^2)^2\) is the optimal pair of ELMs satisfying \(D_{\lambda_1}\hat{\hat{J}}^{\bea_1}(\la_1(\cdot),\ti{\la}_1(\cdot))=0\) and \(D_{\ti{\la}_1}\hat{\hat{J}}^{\bea_1}(\la_1(\cdot),\ti{\la}_1(\cdot))=0\). Then, the solution triple \((X^{\bea_1,\bl^\ast_1}(\cdot),Y^{\bea_1,\bl^\ast_1}(\cdot),Z^{\bea_1,\bl^\ast_1}(\cdot))\) to the FBSDE \eqref{eq:so-u} with \(\lambda_1\) and \(\ti{\la}_1\) replaced by \(\lambda_1^\ast\) and \(\ti{\la}_1^\ast\) respectively, satisfies the following conditions:
	\begin{equation}\label{eq:abco}
		\EE \tu_1^{\bea_1,\bl_1^\ast}(\cdot)=\alpha_1(\cdot),\qquad \EE X^{\bea_1,\bl^\ast_1}(\cdot)=\beta_1(\cdot).  
	\end{equation}
\end{lemma}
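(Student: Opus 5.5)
We compute the Gateaux derivatives of $\hat{\hat{J}}^{\bea_1}$ in the directions of $\lambda_1$ and $\ti{\la}_1$ and show they equal $2(\EE\tu_1^{\bea_1,\bl_1}-\alpha_1)$ and $2(\EE X^{\bea_1,\bl_1}-\beta_1)$. Setting them to zero then gives \eqref{eq:abco}.

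The cleanest route avoids differentiating \eqref{eq:cost2} term by term and works with the definition $\hat{\hat{J}}^{\bea_1}(\bl_1)=\hat J_1^{\bea_1}(\tu_1^{\bea_1,\bl_1},u_2,\bl_1)$. This is an envelope-type argument.

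\textbf{Step 1: dependence on the multipliers.} By Lemma~\ref{lem:3.6}, system \eqref{eq:so-u} is uniquely solvable. It is linear in $(\lambda_1,\ti{\la}_1)$, which enter only as affine inhomogeneous terms. So the map $\bl_1\mapsto(X^{\bea_1,\bl_1},Y^{\bea_1,\bl_1},Z^{\bea_1,\bl_1},\tu_1^{\bea_1,\bl_1})$ is affine. Its linear part is continuous: the solution of \eqref{eq:so-u} is bounded by its data, which follows from the estimates of Lemma~\ref{lem:4.1} and Lemma~\ref{lem:esYZ} together with the uniqueness and linearity of \eqref{eq:so-u} (a closed-graph or bounded-inverse argument suffices). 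Hence, for a direction $\mu\in\LL^2$ we have
\[
\tu_1^{\bea_1,(\lambda_1+\epsilon\mu,\ti{\la}_1)}=\tu_1^{\bea_1,\bl_1}+\epsilon v_\mu ,
\]
with $v_\mu\in\cU_1$ independent of $\epsilon$. The same holds for perturbations of $\ti{\la}_1$.

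\textbf{Step 2: chain rule.} Write $\Phi(u_1,\bl_1)=\hat J_1^{\bea_1}(u_1,u_2,\bl_1)$. This is quadratic in $u_1$ and affine in $\bl_1$. Expand
\[
\Phi(\tu_1+\epsilon v_\mu,\lambda_1+\epsilon\mu,\ti{\la}_1)-\Phi(\tu_1,\bl_1).
\]
The first-order term is $\epsilon\langle D_{u_1}\Phi(\tu_1,\bl_1),v_\mu\rangle+\epsilon\,\partial_{\lambda_1}\Phi\cdot\mu$. The mixed and second-order terms are $O(\epsilon^2)$. By Theorem~\ref{th:3.4}, $\tu_1^{\bea_1,\bl_1}$ solves Problem (F-1), so $D_{u_1}\Phi=0$ there. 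From \eqref{eq:cos-ELM1}, the partial derivative of $\Phi$ in $\lambda_1$ is $2\langle\mu,\EE\tu_1^{\bea_1,\bl_1}-\alpha_1\rangle_{\LL^2}$. Therefore
\[
\langle D_{\lambda_1}\hat{\hat{J}}^{\bea_1},\mu\rangle=2\langle \mu,\EE\tu_1^{\bea_1,\bl_1}-\alpha_1\rangle_{\LL^2}.
\]
In the same way, $\langle D_{\ti{\la}_1}\hat{\hat{J}}^{\bea_1},\mu\rangle=2\langle\mu,\EE X^{\bea_1,\bl_1}-\beta_1\rangle_{\LL^2}$. Here the state perturbation contributes only through $D_{u_1}\Phi$, because for fixed $u_1$ the state \eqref{eq:st-cons} does not depend on $\bl_1$.

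\textbf{Step 3: conclusion.} At $\bl_1^\ast$ both derivatives vanish for every $\mu\in\LL^2$. Since $\EE\tu_1^{\bea_1,\bl_1^\ast}-\alpha_1$ and $\EE X^{\bea_1,\bl_1^\ast}-\beta_1$ are deterministic elements of $\LL^2$ (by square integrability), taking $\mu$ equal to each of them yields \eqref{eq:abco}.

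The main obstacle is Step 1, namely justifying differentiability and boundedness of the solution map of the coupled FBSDE \eqref{eq:so-u} in $\bl_1$. Lemma~\ref{lem:3.6} gives only existence and uniqueness. Continuous dependence must be extracted by linearity and a bounded-inverse argument, or else from the coercivity estimate $J\ge\delta\|u_1\|^2$ implicit in Theorem~\ref{Th:3.1}. That estimate bounds $v_\mu$ through the first-order condition \eqref{eq:OC1} with $R_1\ge\delta$. As a cross-check, one could instead differentiate \eqref{eq:cost2} directly with a duality identity between the variational FBSDE and \eqref{eq:so-u}, but the envelope argument is shorter.
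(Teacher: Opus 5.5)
Your argument is correct, but it takes a different route from the paper.

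\textbf{The paper's route.} The paper perturbs $\bl_1^\ast$ in a direction $(\lambda_1^1,\ti{\la}_1^1)$ and writes the variational FBSDE for $(X^1,Y^1,Z^1)$. It differentiates the expanded reduced functional \eqref{eq:cost2} term by term, then applies It\^o's formula to $\langle Y^\ast,X^1\rangle$. Together with \eqref{eq:OC1}, this cancels every term involving $(X^1,Y^1,Z^1)$ and leaves
$2\EE\int_0^T[\langle\tu_1^{\bea_1,\bl_1^\ast}-\alpha_1,\lambda_1^1\rangle+\langle\ti{\la}_1^1,X^\ast-\beta_1\rangle]\drm s$,
once $-R_1^{-1}[\cB_1^\ast Y^\ast+\cD_1^\ast Z^\ast+\lambda_1^\ast]$ is identified with $\tu_1^{\bea_1,\bl_1^\ast}$. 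This is exactly your formula.

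\textbf{Comparison.} Your envelope argument explains that cancellation in advance. The indirect dependence through $\tu_1^{\bea_1,\bl_1}$ contributes only $\langle D_{u_1}\Phi,v_\mu\rangle$, which vanishes by Theorem~\ref{th:3.4}. So only the explicit affine dependence of $\hat J_1^{\bea_1}$ on $\bl_1$ survives. Your route is shorter and shows the lemma is the standard identity that the derivative of the dual function equals the constraint residual. It also avoids tracking the quadratic cross terms in \eqref{eq:cost2}. The paper's route instead verifies the cancellation by hand through the adjoint equation and the defining relation \eqref{eq:OC1}. It does not invoke the stationarity statement of Theorem~\ref{th:3.4}, whose proof the paper omits.

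\textbf{On your Step~1.} It is not actually an obstacle. Since \eqref{eq:D} is a directional derivative along a fixed direction, you only need two facts. First, $v_\mu:=\tu_1^{\bea_1,(\lambda_1+\mu,\ti{\la}_1)}-\tu_1^{\bea_1,\bl_1}\in\cU_1$. Second, $\tu_1^{\bea_1,(\lambda_1+\epsilon\mu,\ti{\la}_1)}=\tu_1^{\bea_1,\bl_1}+\epsilon v_\mu$. Both follow from the linearity of \eqref{eq:so-u} and the uniqueness in Lemma~\ref{lem:3.6} alone. Indeed, a solution of the fully homogeneous system, added to any solution of \eqref{eq:so-u}, gives a second solution with the same data, so it must be zero. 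The remainder in your expansion is then $\epsilon^2$ times a fixed finite number. No continuity estimate, closed-graph or bounded-inverse argument is needed.
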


We proceed to discussing the specific form of the optimal pair \((\lambda_1^\ast(\cdot), \tilde{\lambda}_1^\ast(\cdot))\).   Based on the unique solvability of the linear FBSDE \eqref{eq:so-u}, we can define linear operators \(\cP_{i,j}\) (where \(i=1,2,3\) and \(j=1,\cdots,6\)) satisfying that for \(i=1,2\), \(\cP_{i,1}: \mathbb{R}^n \to L^{2,c}_{\FF}(\mathbb{R}^n)\), \(\cP_{i,m}: L^2 \to L^{2,c}_{\FF}(\mathbb{R}^n)\) (for \(m=2,\cdots,5\)), and \(\cP_{i,6}: \cU_2 \to L^{2,c}_{\FF}(\mathbb{R}^n)\); for \(i=3\), \(\cP_{3,1}: \mathbb{R}^n \to L^2_{\FF}(\mathbb{R}^n)\), \(\cP_{3,m}: L^2 \to L^2_{\FF}(\mathbb{R}^n)\) (for \(m=2,\cdots,5\)), and \(\cP_{3,6}: \cU_2 \to L^2_{\FF}(\mathbb{R}^n)\); along with some random variables \(\cP_{1,7}, \cP_{2,7} \in L^{2,c}_{\FF}(\mathbb{R}^n)\) and \(\cP_{3,7} \in L^2_{\FF}(\mathbb{R}^n)\), such that
\begin{equation*}
	\begin{aligned}
		X^{\bea_1,\bl_1}(\cdot)=&(\cP_{1,1}x)(\cdot)+(\cP_{1,2}\lambda^\ast_1)(\cdot)+(\cP_{1,3}\lambda^\ast_2)(\cdot)+(\cP_{1,4}\alpha_1)(\cdot)+(\cP_{1,5}\beta_1)(\cdot)+(\cP_{1,6}u_2)(\cdot)+\cP_{1,7}(\cdot),\\
		Y^{\bea_1,\bl_1}(\cdot)=&(\cP_{2,1}x)(\cdot)+(\cP_{2,2}\lambda^\ast_1)(\cdot)+(\cP_{2,3}\lambda^\ast_2)(\cdot)+(\cP_{2,4}\alpha_1)(\cdot)+(\cP_{2,5}\beta_1)(\cdot)+(\cP_{2,6}u_2)(\cdot)+\cP_{2,7}(\cdot),\\
		Z^{\bea_1,\bl_1}(\cdot)=&(\cP_{3,1}x)(\cdot)+(\cP_{3,2}\lambda^\ast_1)(\cdot)+(\cP_{3,3}\lambda^\ast_2)(\cdot)+(\cP_{3,4}\alpha_1)(\cdot)+(\cP_{3,5}\beta_1)(\cdot)+(\cP_{3,6}u_2)(\cdot)+\cP_{3,7}(\cdot).
	\end{aligned}
\end{equation*}
Based on the expression of \(\tu_1^{\bea_1,\bl_1}(\cdot)\), we define linear operators \(\cP_{i,j}\) (where \(i=4\) and \(j=1,2,\cdots,6\)), with   \(\cP_{4,1}:\mathbb{R}^n \to \cU_1\), \(\cP_{4,m}:\LL^2 \to \cU_1\) (for \(m=2,3,\cdots,5\)), \(\cP_{4,6}:\cU_2 \to \cU_1\), and the random variable $\cP_{4,7}\in\cU_1$,  such that
\begin{equation}\label{eq:ou}
	\begin{aligned}
		\tu_1^{\bea_1,\bl_1}(\cdot)&=(\cP_{4,1}x)(\cdot)+(\cP_{4,2}\lambda^\ast_1)(\cdot)+(\cP_{4,3}\lambda^\ast_2)(\cdot)+(\cP_{4,4}\alpha_1)(\cdot)+(\cP_{4,5}\beta_1)(\cdot)+(\cP_{4,6}u_2)(\cdot)+\cP_{4,7}(\cdot).
	\end{aligned}
\end{equation}
Moreover, from the conditions \eqref{eq:abco} and the fact that expectation is also a linear operator, we can also define some linear operators \(\cO_{i,j}\)   with \(\cO_{i,1}:\RR^n\to \LL^2\), \(\cO_{i,m}:\LL^2\to \LL^2\), \(\cO_{i,6}:\cU_2\to \LL^2\), and \(\cO_{i,7}\in \LL^2\), where \(i=1,2\), \(j=1,\cdots,6\), and \(m=2,\cdots,5\), such that
\begin{equation*}
	\begin{aligned}
		\alpha_1(\cdot)=&(\cO_{1,1}x)(\cdot)+(\cO_{1,2}\lambda^\ast_1)(\cdot)+(\cO_{1,3}\lambda^\ast_2)(\cdot)+(\cO_{1,4}\alpha_1)(\cdot)+(\cO_{1,5}\beta_1)(\cdot)+(\cO_{1,6}u_2)(\cdot)+\cO_{1,7}(\cdot),\\
		\beta_1(\cdot)=&(\cO_{2,1}x)(\cdot)+(\cO_{2,2}\lambda^\ast_1)(\cdot)+(\cO_{2,3}\lambda^\ast_2)(\cdot)+(\cO_{2,4}\alpha_1)(\cdot)+(\cO_{2,5}\beta_1)(\cdot)+(\cO_{2,6}u_2)(\cdot)+\cO_{2,7}(\cdot).
	\end{aligned}
\end{equation*}
Therefore,  the aforementioned system of equations in matrix form can be rewritten  as follows
\begin{align}\label{eq:laeq}
	\tilde{\cO}_1(\bl_1^\ast)^\top=\tilde{\cO}_2\bea_1^\top-\tilde{\cO}_3x-\tilde{\cO}_4 u_2-\ti{\cO}_5,
\end{align}
where \(\tilde{\cO}_1=\begin{pmatrix}
	\cO_{1,2}&\cO_{1,3}\\
	\cO_{2,2}&\cO_{2,3}
\end{pmatrix}\), \(\tilde{\cO}_2=I_{2\times 2}-\begin{pmatrix}
	\cO_{1,4}&\cO_{1,5}\\
	\cO_{2,4}&\cO_{2,5}
\end{pmatrix}\), \(\tilde{\cO}_3=(\cO_{1,1},\cO_{2,1})^\ast\),   \(\tilde{\cO}_4=(\cO_{1,6},\cO_{2,6})^\ast\), and \(\ti{\cO}_5=(\cO_{1,7},\cO_{2,7})^\ast\). We then derive that \(\tilde{\cO}_i\) (for \(i=1,2,3,4\)) are  operator-valued matrices, where \(\tilde{\cO}_1:\LL^2\to \LL^2\), \(\tilde{\cO}_2:\LL^2\to \LL^2\), \(\tilde{\cO}_3:\RR^n\to \LL^2\),   \(\tilde{\cO}_4:\cU_2\to \LL^2\), while  \(\ti{\cO}_5\in\LL^2\).

Next, we consider two cases for analyzing the solvability of the equation \eqref{eq:laeq}.

\textbf{Case 1:}If the matrix \(\tilde{\cO}_1\) is  invertible, then equation \eqref{eq:laeq} is uniquely solvable. This implies that \((\bl_1^\ast)^\top = \tilde{\cO}_1^{-1}\left(\tilde{\cO}_2 \bea_1^\top - \tilde{\cO}_3x-\tilde{\cO}_4 u_2-\ti{\cO}_5\right)\). Then, both $\lambda_1^\ast$ and $\tilde{\lambda}_1^\ast$ can be formulated as linear combinations of $ \alpha_1$, $\beta_1$, $x$, $u_2$, together with a homogeneous  term.  Substituting this expression into the representation of $\tilde{\ti{u}}_1^{\bm{\bea}_1,\bm{\la}_1}$ given in \eqref{eq:ou} shows that  $\tilde{\ti{u}}_1^{\bm{\bea}_1,\bm{\la}_1}$ can be rewritten by some affine operators acting on the tuple $( \alpha_1,\beta_1, x, u_2)$.

\textbf{Case 2:} If the matrix \(\tilde{\cO}_1\) is not invertible,  the  equation \eqref{eq:laeq} is ill-posed in the sense of unique solvability. To derive the operator representation of \(\tu_1^{\bea_1,\bl_1}(\cdot)\), we have to  characterize the dual solution space and its projection onto the primal control space \(\cU_1\).

Let $\mathbf{r} \equiv \tilde{\cO}_2\bea_1^\top 
- \tilde{\cO}_3 x - \tilde{\cO}_4 u_2 - \tilde{\cO}_5$.
The existence of the optimal ELM pair 
$\bl_1^\ast = (\lambda_1^\ast, \tilde\lambda_1^\ast)$ 
follows from the first-order conditions 
$D_{\lambda_1}\hat{\hat{J}}^{\bea_1} = 0$ and 
$D_{\tilde\lambda_1}\hat{\hat{J}}^{\bea_1} = 0$ 
of Problem~(F-2), which are well-defined by the 
differentiability of $\hat{\hat{J}}^{\bea_1}$ 
with respect to $\bl_1$. By Lemma~\ref{lem:3.7}, 
this $\bl_1^\ast$ satisfies the consistency 
conditions~\eqref{eq:abco}. This guarantees that 
the Fréchet derivative of the constraint operator 
in Problem~(F-2) is surjective, i.e., the regular 
point condition of the Generalized Lagrange 
Multiplier Theorem 
\cite[Theorem~1, p.243]{luenberger1997optimization} 
is satisfied. Hence $\bl_1^\ast$ is a solution 
to~\eqref{eq:laeq}, which establishes 
$\mathbf{r} \in \mathrm{Ran}(\tilde{\cO}_1)$. The complete set of valid extended Lagrange multipliers forms a non-empty affine subspace 
\begin{equation}
	\mathscr{S}  := \left\{(\bm{\lambda}_1^\ast)^\top + \bm{\delta} \mid\bm{\delta} \in \ker(\tilde{\mathcal{O}}_1)\right\} \subset (\mathbb{L}^2)^2 
\end{equation}
where \(\bl_1^\ast\) is any particular solution to \eqref{eq:laeq}. 

To bridge the non-unique dual space \(\mathscr{S}\) with the primal control, we define the combined block operator \(\bm{\mathcal{P}}_{4,\lambda} \equiv (\cP_{4,2}, \cP_{4,3}): (\LL^2)^2 \to \cU_1\). Since the cost functional $\hat{J}_1^{\bm{\eta}_1}$ is strictly  convex in $u_1$,  its minimiser over $\mathcal{U}_1$ is 
unique. Therefore, the mapping from the solution space $\mathscr{S}$ to the control space $\mathcal{U}_1$ is constant. For any two solutions $\bm{\lambda}_1^{(1)}, 
\bm{\lambda}_1^{(2)} \in \mathscr{S}$, their 
difference $\bm{\delta} \equiv \bm{\lambda}_1^{(1)} 
- \bm{\lambda}_1^{(2)}$ lies in $\ker(\tilde{\cO}_1)$. 
Since both $\bm{\lambda}_1^{(1)}$ and 
$\bm{\lambda}_1^{(2)}$ satisfy the consistency 
conditions~\eqref{eq:abco}, the strict convexity of 
$\hat{J}_1^{\bm{\eta}_1}$ in $u_1$  from the Theorem~\ref{Th:3.1}  implies that both yield the 
same optimal control. Therefore 
$\bm{\cP}_{4,\lambda}(\bm{\delta}) = 0$, which gives 
$\ker(\tilde{\cO}_1) \subseteq 
\ker(\bm{\cP}_{4,\lambda})$.

This inclusion ensures that \(\bm{\mathcal{P}}_{4,\lambda}\) factors well-defined through the quotient space \((\LL^2)^2 / \ker(\tilde{\cO}_1)\). In other words, any multiplier perturbation \(\bm{\delta} \in \ker(\tilde{\cO}_1)\) is inherently absorbed and maps to the zero element in \(\cU_1\). Thus, by fixing an arbitrary particular solution \(\bl_1^\ast \in \mathscr{S}\), the optimal control precisely preserves the identical affine representation as in Case 1.

Consequently, there exist linear operators $\mathcal{L}_{i,j}$ and random variables $\mathcal{L}_{i,5} \in \mathbb{L}^2$ (for $i=1,2$ and $j=1,2,3,4$), where $\mathcal{L}_{i,m}: \mathbb{L}^2 \to \mathbb{L}^2$ (for $m=1,2$), $\mathcal{L}_{i,3}: \mathbb{R}^n \to \mathbb{L}^2$, and $\mathcal{L}_{i,4}: \mathcal{U}_2 \to \mathbb{L}^2$, such that
\begin{align*}
	\begin{pmatrix}
		\la_1^\ast\\
		\ti{\la}_1^\ast
	\end{pmatrix}=\begin{pmatrix}
		\cL_{1,1}&\cL_{1,2}\\
		\cL_{2,1}&\cL_{2,2}
	\end{pmatrix}\begin{pmatrix}
		\alpha_1\\
		\beta_1
	\end{pmatrix}+\begin{pmatrix}
		\cL_{1,3}\\
		\cL_{2,3}
	\end{pmatrix} x+\begin{pmatrix}
		\cL_{1,4}\\
		\cL_{2,4}
	\end{pmatrix} u_2+\begin{pmatrix}
		\cL_{1,5}\\
		\cL_{2,5}
	\end{pmatrix} .
\end{align*}
This allows us to further deduce the existence of linear operators  \(\cK_{1,1}:\RR^n\to \cU_1\), \(\cK_{1,j}:\LL^2\to \cU_1\)(for \(j=2,3\)),   \(\cK_{1,4}:\cU_2\to \cU_1\), and \(\cK_{1,5}\in\cU_1\) such that 
\begin{align}\label{eq:ou1}
	\tu_1^{\bea_1,\bl_1}(\cdot)=(\cK_{1,1} x)(\cdot)+(\cK_{1,2} \alpha_1)(\cdot)+(\cK_{1,3}\beta_1)(\cdot)+(\cK_{1,4} u_2)(\cdot)+\cK_{1,5}(\cdot).
\end{align}

Finally, we formulate the optimal control problem with respect to the new control variables $\alpha_1(\cdot)$ and $\beta_1(\cdot)$,  which will be formulated as Problem (F-3) below. Its state equation is \eqref{eq:st-cons}, with \(u_1\) replaced by \(\tu_1^{\bea_1,\bl_1}\) and the state variable \(X\) replaced by \(X^{\bea_1,\bl_1}\), and the corresponding cost functional is
\begin{equation}\label{eq:cost3}
	\begin{aligned}  &\tilde{J}_1(\alpha_1(\cdot),\beta_1(\cdot))\\
		&\equiv\mathbb{E}\Big\{\int_0^T\Big[\langle Q_1(s)X^{\bea_1,\bl_1}(s),X^{\bea_1,\bl_1}(s)\rangle+\langle\bar{Q}_1(s)
		\beta_1(s),\beta_1(s)\rangle+ \langle R_1(s)\tu_1^{\bea_1,\bl_1}(s),\tu_1^{\bea_1,\bl_1}(s)\rangle\\
		&\qquad\qquad+\langle \bar{R}_1(s)\alpha_1(s),\alpha_1(s)\rangle\Big]\drm s +\langle G_1X^{\bea_1,\bl_1}(T),X^{\bea_1,\bl_1}(T)\rangle \Big\}.
	\end{aligned}
\end{equation}
Moreover, Problem (F-3) is formulated in terms of the following Fréchet derivative.

\noindent\textbf{Problem (F-3):} For any fixed $x\in\RR^n$ and $u_2\in\cU_2$, find optimal control variables $\alpha_1^\ast(\cdot)$ and $\beta_1^\ast(\cdot)$ such that
\[
D_{\alpha_1}\tilde{J}_1(\alpha_1^\ast(\cdot),\beta_1^\ast(\cdot))=0,\qquad  D_{\beta_1}\tilde{J}_1(\alpha_1^\ast(\cdot),\beta_1^\ast(\cdot))=0,
\]
where the differential operators are defined identically to \eqref{eq:D}.

By the linearity of the SDE \eqref{eq:st-cons} and the affine representation of $\tu_1^{\bea_1,\bl_1}$ in \eqref{eq:ou1}, the state process $X^{\bea_1,\bl_1}(\cdot)$ and its terminal value $X^{\bea_1,\bl_1}(T)$ inherently admit affine representations. Consequently, taking expectations and applying $\EE X^{\bea_1,\bl_1}(T)=\beta_1(T)$, there exist bounded linear operators $\cK_{i,j}$ and elements $\cK_{i,5}$ (for $i=2,3,4$ and $j=1,2,3,4$) with the following specific mappings
\begin{itemize}
	\item$\cK_{2,1}: \RR^n \to L^{2,c}_\FF(\RR^n)$, $\cK_{2,m}: \LL^2 \to L^{2,c}_\FF(\RR^n)$ (for $m=2,3$), $\cK_{2,4}: \cU_2 \to L^{2,c}_\FF(\RR^n)$, and $\cK_{2,5} \in L^{2,c}_\FF(\RR^n)$;
	\item$\cK_{3,1}: \RR^n \to L^2_{\cF_T}(\RR^n)$, $\cK_{3,m}: \LL^2 \to L^2_{\cF_T}(\RR^n)$ (for $m=2,3$), $\cK_{3,4}: \cU_2 \to L^2_{\cF_T}(\RR^n)$, and $\cK_{3,5} \in L^2_{\cF_T}(\RR^n)$;
	\item$\cK_{4,1}: \RR^n \to \LL^2$, $\cK_{4,m}: \LL^2 \to \LL^2$ (for $m=2,3$), $\cK_{4,4}: \cU_2 \to \LL^2$, and $\cK_{4,5} \in \LL^2$,
\end{itemize}
such that 
\begin{equation}\label{eq:bo}
\begin{aligned}
	X^{\bea_1,\bl_1}(\cdot) &= (\cK_{2,1}x)(\cdot) + (\cK_{2,2}\alpha_1)(\cdot) + (\cK_{2,3}\beta_1)(\cdot) + (\cK_{2,4}u_2)(\cdot) + \cK_{2,5}(\cdot), \\
	X^{\bea_1,\bl_1}(T) &= \cK_{3,1}x + \cK_{3,2}\alpha_1 + \cK_{3,3}\beta_1 + \cK_{3,4}u_2 + \cK_{3,5}, \\
	\beta_1(T) &= \cK_{4,1}x + \cK_{4,2}\alpha_1 + \cK_{4,3}\beta_1 + \cK_{4,4}u_2 + \cK_{4,5}.
\end{aligned}
\end{equation}

Now, we introduce the following lemma to state the strict convexity of \(\tilde{J}_1(\alpha_1(\cdot),\beta_1(\cdot))\) with respect to the control variables \(\alpha_1(\cdot)\) and \(\beta_1(\cdot)\), which implies the uniqueness of the optimal control variables \(\alpha^\ast_1(\cdot)\) and \(\beta_1^\ast(\cdot)\).
\begin{lemma}\label{lem:cc}
	Let (H1) and (H2) hold. Then, for any \(x\in\RR^n\) and \(u_2\in\cU_2\), the cost functional \(\tilde{J}_1(\alpha_1(\cdot),\beta_1(\cdot))\) is strictly convex with respect to \(\alpha_1\) and \(\beta_1\).
\end{lemma}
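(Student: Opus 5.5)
The plan is to reduce the question to the affine structure already available from \eqref{eq:ou1} and \eqref{eq:bo}. For fixed $x$ and $u_2$, the maps $(\alpha_1,\beta_1)\mapsto \tu_1^{\bea_1,\bl_1}$, $(\alpha_1,\beta_1)\mapsto X^{\bea_1,\bl_1}(\cdot)$ and $(\alpha_1,\beta_1)\mapsto X^{\bea_1,\bl_1}(T)$ are affine. Their linear parts are $(\cK_{1,2},\cK_{1,3})$, $(\cK_{2,2},\cK_{2,3})$ and $(\cK_{3,2},\cK_{3,3})$, respectively. All of these are bounded, by Lemma \ref{lem:4.1} together with Lemma \ref{lem:3.6}.

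Hence $\tilde J_1$ in \eqref{eq:cost3} is a quadratic functional of $\bea_1=(\alpha_1,\beta_1)$. Convexity therefore comes down to the sign of its quadratic part. Take $\bea_1^{(1)},\bea_1^{(2)}\in(\LL^2)^2$ and $\theta\in(0,1)$, and let $\Delta\bea_1=\bea_1^{(1)}-\bea_1^{(2)}$. The standard identity for quadratics gives
\[
\theta\tilde J_1(\bea_1^{(1)})+(1-\theta)\tilde J_1(\bea_1^{(2)})-\tilde J_1(\theta\bea_1^{(1)}+(1-\theta)\bea_1^{(2)})=\theta(1-\theta)\,\mathcal{Q}(\Delta\bea_1).
\]
Here $\mathcal{Q}$ is $\tilde J_1$ with all inhomogeneous terms deleted, that is, with $x=0$, $u_2=0$, $b=\sigma=0$, and evaluated at $\Delta\bea_1=(\Delta\alpha_1,\Delta\beta_1)$. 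Concretely,
\[
\mathcal{Q}(\Delta\bea_1)=\EE\Big\{\int_0^T\big[\langle Q_1\Delta X,\Delta X\rangle+\langle\bar Q_1\Delta\beta_1,\Delta\beta_1\rangle+\langle R_1\Delta u,\Delta u\rangle+\langle\bar R_1\Delta\alpha_1,\Delta\alpha_1\rangle\big]\drm s+\langle G_1\Delta X(T),\Delta X(T)\rangle\Big\},
\]
where $\Delta u=\cK_{1,2}\Delta\alpha_1+\cK_{1,3}\Delta\beta_1$ and $\Delta X=\cK_{2,2}\Delta\alpha_1+\cK_{2,3}\Delta\beta_1$.

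Strict convexity then follows once we show $\mathcal{Q}(\Delta\bea_1)>0$ whenever $\Delta\bea_1\neq0$ in $(\LL^2)^2$. By (H2), $Q_1\ge0$, $G_1\ge0$ and $R_1\ge0$. The remaining two terms give the lower bound
\[
\mathcal{Q}(\Delta\bea_1)\ge\delta\big(\|\Delta\alpha_1\|_{\LL^2}^2+\|\Delta\beta_1\|_{\LL^2}^2\big),
\]
which is strictly positive. This is the key point: the penalty on the auxiliary deterministic variables enters $\tilde J_1$ directly, and does not pass through the state. That is why the uniform positivity $\bar Q_1,\bar R_1\ge\delta\mathbf I$ in (H2) is essential.

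The main subtlety is justifying that the convex-combination identity holds with the same $\mathcal{Q}$. The obstacle is that the multipliers $\bl_1^\ast$ entering $\tu_1^{\bea_1,\bl_1}$ are themselves chosen depending on $\bea_1$. In Case 2 they are chosen only up to $\ker(\tilde{\cO}_1)$. I would handle this with the factorization argument given before \eqref{eq:ou1}. Since $\ker(\tilde\cO_1)\subseteq\ker(\bm{\cP}_{4,\lambda})$, the control $\tu_1^{\bea_1,\bl_1}$ is a well-defined affine function of $(\alpha_1,\beta_1,x,u_2)$ through $\cK_{1,\cdot}$. Then $X^{\bea_1,\bl_1}$ is the solution of the linear SDE \eqref{eq:st-cons} driven by this control, with $\beta_1$ inserted. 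By uniqueness in Lemma \ref{lem:4.1}, it is affine in $\bea_1$, which gives \eqref{eq:bo}.

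With affinity established, I would differentiate twice in the direction $\Delta\bea_1$ as a check, which gives $D^2\tilde J_1[\Delta\bea_1,\Delta\bea_1]=2\mathcal{Q}(\Delta\bea_1)$. This recovers the same conclusion and shows $\tilde J_1$ is in fact uniformly convex. Uniqueness of $(\alpha_1^\ast,\beta_1^\ast)$ then follows as stated.
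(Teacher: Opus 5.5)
Your proposal is correct and follows essentially the same route as the paper: substitute the affine representations \eqref{eq:ou1}--\eqref{eq:bo} into \eqref{eq:cost3}, and obtain strict convexity from $\bar R_1,\bar Q_1\ge\delta\mathbf{I}$, with all remaining quadratic terms nonnegative. Writing the quadratic part as $\mathcal{Q}(\Delta\bea_1)$, i.e.\ the cost evaluated on the homogeneous increments, is cleaner than the paper's block expansion, which checks only the two diagonal blocks separately; your form also absorbs the $\alpha_1$--$\beta_1$ cross terms, so it gives joint (indeed uniform) convexity, which is what the uniqueness claim in Theorem~\ref{th:3.10} requires.
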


Based on this convexity, we present the necessary and sufficient conditions for the optimal pair $(\alpha_1^\ast,\beta_1^\ast)$.

\begin{theorem}\label{th:3.10}
	Let assumptions (H1) and (H2) hold. For any fixed $x\in\RR^n$ and $u_2\in\cU_2$, $(\alpha_1^\ast,\beta_1^\ast)\in(\LL^2)^2$ is the optimal pair if and only if the following operator equation admits a unique solution:
	\begin{equation}\label{eq:eo}
		(\cW+\cK_{23}^\ast\cT\cK_{23})\cdot(\alpha^\ast_1,\beta_1^\ast)^\top+\cK_{23}^\ast\cT \cK_{14}\cdot (x,u_2)^\top+\cK_{23}^\ast\cT\cK_5=(0,0)^\top,
	\end{equation}
	where $\cW=\begin{pmatrix} \bar{R}_1&0\\ 0&\bar{Q}_1 \end{pmatrix}$, $\cK_{23}=\begin{pmatrix} \cK_{1,2}&\cK_{1,3}\\ \cK_{2,2}&\cK_{2,3}\\ \cK_{3,2}&\cK_{3,3} \end{pmatrix}$, $\cT=\begin{pmatrix} R_1&0&0\\ 0&Q_1&0\\ 0&0&G_1 \end{pmatrix}$, $\cK_{14}=\begin{pmatrix} \cK_{1,1}&\cK_{1,4}\\ \cK_{2,1}&\cK_{2,4}\\ \cK_{3,1}&\cK_{3,4} \end{pmatrix}$, and $\cK_5=(\cK_{1,5},\cK_{2,5},\cK_{3,5})^\ast$.
\end{theorem}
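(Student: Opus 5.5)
The plan is to rewrite $\tilde J_1$ in \eqref{eq:cost3} as a quadratic functional of $\bea_1=(\alpha_1,\beta_1)$ on the Hilbert space $(\LL^2)^2$, and then read off the first-order condition. By \eqref{eq:ou1} and the first two lines of \eqref{eq:bo}, the triple $\Xi\equiv(\tu_1^{\bea_1,\bl_1},X^{\bea_1,\bl_1},X^{\bea_1,\bl_1}(T))^\top$ takes values in $\cU_1\times L^{2}_\FF(\RR^n)\times L^2_{\cF_T}(\RR^n)$. It admits the affine representation
\[
\Xi=\cK_{23}\,\bea_1^\top+\cK_{14}\,(x,u_2)^\top+\cK_5 ,
\]
and all the operators involved are bounded: this follows from Lemma~\ref{lem:4.1}, Lemma~\ref{lem:3.6} and the boundedness of the operators $\cL_{i,j}$. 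Consequently
\[
\tilde J_1(\alpha_1,\beta_1)=\langle \cW\bea_1^\top,\bea_1^\top\rangle_{(\LL^2)^2}+\EE\langle \cT\Xi,\Xi\rangle ,
\]
where the pairing is the natural product inner product (the time integral for the first two components, the plain expectation for the terminal one). Here the $\bar Q_1,\bar R_1$ terms are deterministic in $\bea_1$, so their expectations give the block $\cW$ with $\EE[\bar R_1],\EE[\bar Q_1]$. I would note explicitly that $\cW$ is to be read in this expected sense.

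Next I would compute the Gâteaux derivative in the sense of \eqref{eq:D}. For a direction $\bm{v}=(v_1,v_2)\in(\LL^2)^2$, expanding the quadratic form gives
\[
\tfrac12\langle D\tilde J_1(\bea_1),\bm v\rangle=\big\langle (\cW+\cK_{23}^\ast\cT\cK_{23})\bea_1^\top+\cK_{23}^\ast\cT\cK_{14}(x,u_2)^\top+\cK_{23}^\ast\cT\cK_5,\ \bm v\big\rangle_{(\LL^2)^2}.
\]
Here $\cK_{23}^\ast$ is the Hilbert adjoint from the product space back to $(\LL^2)^2$. This adjoint maps into deterministic functions, which accounts for the expectation that is hidden in it. Since $\bm v$ is arbitrary, the conditions $D_{\alpha_1}\tilde J_1=D_{\beta_1}\tilde J_1=0$ are equivalent to \eqref{eq:eo}.

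For the equivalence with optimality, convexity makes the first-order condition sufficient. It is also necessary, because the minimiser lies in the interior of an open domain, namely all of $(\LL^2)^2$. Lemma~\ref{lem:cc} gives strict convexity, so the minimiser is unique when it exists, and hence \eqref{eq:eo} has at most one solution.

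The main obstacle is existence, i.e. showing that $\cW+\cK_{23}^\ast\cT\cK_{23}$ is invertible. By (H2) we have $\cT\ge0$, so $\cK_{23}^\ast\cT\cK_{23}\ge 0$. Also $\cW\ge\delta I$, because $\bar R_1,\bar Q_1\ge\delta I$ and therefore their expectations are as well. The operator is bounded and self-adjoint with $\langle(\cW+\cK_{23}^\ast\cT\cK_{23})\bm v,\bm v\rangle\ge\delta\|\bm v\|^2$, so Lax--Milgram yields a unique solution of \eqref{eq:eo}. This coercivity also upgrades Lemma~\ref{lem:cc} to uniform convexity and guarantees that a minimiser exists. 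The delicate point is checking that the representation in \eqref{eq:bo} is consistent with the constraint $\EE X^{\bea_1,\bl_1}=\beta_1$ from Lemma~\ref{lem:3.7}. This matters because the operators $\cK_{i,j}$ must be defined for every $\bea_1$ and must not depend on the particular representative chosen in $\mathscr S$. I would handle this by citing the quotient argument given earlier, which makes $\cK_{1,j}$, and hence $\cK_{2,j}$ and $\cK_{3,j}$, well defined and bounded.
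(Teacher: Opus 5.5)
Your proposal is correct and takes essentially the same route as the paper. The paper likewise substitutes the affine representations \eqref{eq:ou1}--\eqref{eq:bo} into $\tilde J_1$, expands it as a quadratic form in $(\alpha_1,\beta_1)$ (term by term, rather than in your compact $\cK_{23},\cT$ notation), and reads \eqref{eq:eo} off the first-order conditions. Your added coercivity/Lax--Milgram step ($\cW\ge\delta I$ via $\EE[\bar R_1],\EE[\bar Q_1]\ge\delta I$, together with $\cK_{23}^\ast\cT\cK_{23}\ge 0$) makes explicit the unique solvability of \eqref{eq:eo}, which the paper only asserts in the remark following the theorem.
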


\begin{remark}
	By Assumption (H2), equation \eqref{eq:eo} admits a unique solution pair $(\alpha_1^\ast,\beta_1^\ast)$ which is affine in $x$ and $u_2$. Substituting this solution back into \eqref{eq:ou1} yields the explicit affine feedback form for the optimal control. Specifically, there exist linear operators $\cM_{1,1}: \RR^n\to \cU_1$, $\cM_{1,2}:\cU_2\to \cU_1$, and a random variable $\cM_{1,3}\in\cU_1$ such that:
	\begin{equation}\label{eq:3u}
		\tu_1^{\bea_1,\bl_1}(\cdot)=(\cM_{1,1}x)(\cdot)+(\cM_{1,2}u_2)(\cdot)+\cM_{1,3}(\cdot).
	\end{equation}
\end{remark}

We summarize the complete solution to Problem (MFSOLQ-F) in the following main theorem.

\begin{theorem}\label{th:follow-main}
	Let Assumptions (H1) and (H2) hold. The unique optimal control $\tilde{u}_1(\cdot)$ of Problem (MFSOLQ-F) is given by \eqref{eq:OC1}, where the state processes $(X^{\bea_1,\bl_1}, Y^{\bea_1,\bl_1}, Z^{\bea_1,\bl_1})$ solve the FBSDE \eqref{eq:op} parameterized by the optimal extended Lagrange multipliers $(\lambda_1^\ast, \ti{\la}_1^\ast)$ and $\beta_1^\ast$. Furthermore, $(\lambda_1^\ast, \ti{\la}_1^\ast)$ are determined by \eqref{eq:laeq}, with the optimal pair $(\alpha_1^\ast, \beta_1^\ast)$ being the unique solution to \eqref{eq:eo}.
\end{theorem}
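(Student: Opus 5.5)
The plan is to assemble Theorem~\ref{th:follow-main} from the three-layer reduction (F-1)--(F-3) by proving that the nested optimization
\[
\inf_{u_1\in\cU_1}J_1(u_1,u_2)=\inf_{\bea_1\in(\LL^2)^2}\ \sup_{\bl_1\in(\LL^2)^2}\ \inf_{u_1\in\cU_1}\hat J_1^{\bea_1}(u_1,u_2,\bl_1)
\]
is attained exactly at the control $\tu_1^{\bea_1^\ast,\bl_1^\ast}$ described in the statement. Then I will use Theorem~\ref{Th:3.2} (existence and uniqueness of the optimum of Problem (MFSOLQ-F)) to identify this control with $\tilde u_1$.

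\textbf{Outer layer.} First, for fixed $(\bea_1,\bl_1)$, Theorem~\ref{th:3.4} and Lemma~\ref{lem:3.6} give the unique minimizer $\tu_1^{\bea_1,\bl_1}$ of $\hat J_1^{\bea_1}$, characterized by \eqref{eq:op}--\eqref{eq:OC1}. Second, for fixed $\bea_1$, any solution $\bl_1^\ast$ of Problem (F-2) satisfies \eqref{eq:laeq}. By Lemma~\ref{lem:3.7} it enforces the constraints \eqref{eq:abco}, so that $\hat J_1^{\bea_1}=J_1^{\bea_1}$ on this control. The min--max equality quoted before Problem (F-1) then shows that $\tu_1^{\bea_1,\bl_1^\ast}$ is the constrained minimizer of Lemma~\ref{lem:F1}. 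In Case 2, the kernel argument shows the control does not depend on which element of $\mathscr S$ is chosen, so the affine form \eqref{eq:ou1} is well-defined. Third, $\tilde J_1(\bea_1)$ is the value of the constrained problem. By Lemma~\ref{lem:cc} and Theorem~\ref{th:3.10}, its unique minimizer $\bea_1^\ast$ solves \eqref{eq:eo}.

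\textbf{Identification with $\tilde u_1$.} Let $u^\ast:=\tu_1^{\bea_1^\ast,\bl_1^\ast}$. For any $u_1\in\cU_1$, set $\bea_1=(\EE u_1,\EE X^{u_1})\in(\LL^2)^2$; Lemma~\ref{lem:4.1} ensures these are square integrable. Then
\[
J_1(u_1,u_2)=J_1^{\bea_1}(u_1,u_2)\ge \tilde J_1(\bea_1)\ge\tilde J_1(\bea_1^\ast)=J_1(u^\ast,u_2),
\]
where the last equality uses that $u^\ast$ satisfies \eqref{eq:abco} with $\bea_1^\ast$. Hence $u^\ast$ is optimal for (MFSOLQ-F), and uniqueness from Theorem~\ref{Th:3.2} gives $u^\ast=\tilde u_1$. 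Its representation through \eqref{eq:OC1}, \eqref{eq:op} with $(\lambda_1^\ast,\ti\la_1^\ast,\beta_1^\ast)$ follows by construction.

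\textbf{Main obstacle.} The delicate step is the equality $\hat J_1^{\bea_1}(u^\ast,\cdot,\bl^\ast_1)=J_1^{\bea_1}$ together with $\tilde J_1(\bea_1)$ being the constrained infimum, that is, the absence of a duality gap. I will rely on the cited min--max result from \cite{bonnans2013perturbation} and on Lemma~\ref{lem:3.7}. As a consistency check, I will verify that substituting $\lambda_1^\ast,\ti\la_1^\ast$ and \eqref{eq:abco} into \eqref{eq:op}--\eqref{eq:OC1} reproduces \eqref{eq:F1}--\eqref{eq:so}. This identification would give an independent route through the sufficiency part of Theorem~\ref{Th:3.2}, provided the multipliers match $\EE[\cA_2^\ast Y+\cC_2^\ast Z]+\EE[\bar Q_1]\beta_1$ and $\EE[\bar R_1]\alpha_1$. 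That matching is what stationarity in $\bea_1$ (equation \eqref{eq:eo}) should encode.
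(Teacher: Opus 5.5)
Your proposal is correct and follows essentially the paper's route. The paper gives no separate proof of Theorem~\ref{th:follow-main}; it obtains the result by chaining (F-1)--(F-3), namely Theorem~\ref{th:3.4} and Lemma~\ref{lem:3.6}, then Lemma~\ref{lem:3.7} with the Case~1/Case~2 analysis of \eqref{eq:laeq}, then Lemma~\ref{lem:cc} and Theorem~\ref{th:3.10}, exactly as in your outer-layer paragraph. Your chain $J_1(u_1,u_2)=J_1^{\bea_1}(u_1,u_2)\ge\tilde J_1(\bea_1)\ge\tilde J_1(\bea_1^\ast)=J_1(u^\ast,u_2)$, together with uniqueness from Theorem~\ref{Th:3.2}, makes explicit the decomposition $\inf_{u_1}J_1=\inf_{\bea_1}\inf_{u_1}\{J_1^{\bea_1}:\EE u_1=\alpha_1,\ \EE X^{\bu_1}=\beta_1\}$ that the paper states before \eqref{eq:st-cons}; like the paper, you take the existence of a stationary multiplier $\bl_1^\ast$ for Problem (F-2) as given rather than proving it.
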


\begin{remark}
From a computational perspective, the theoretical characterization in Theorem \ref{th:follow-main} provides a natural foundation for developing an iterative numerical scheme. While a rigorous contraction-based justification of the Picard iteration for the fully coupled FBSDE \eqref{eq:op} typically necessitates an additional small-horizon condition, preserving the inherent forward-backward coupling during the iterative process proves to be highly effective in practice. Therefore, rather than employing artificial decoupling strategies, we directly tackle the coupled leader-follower system. This coupled iterative scheme serves as the core mechanism for the numerical solver developed in our subsequent experiments.
\end{remark}

\subsection{Solving the Problem (MFSOLQ-L)}\label{sec:4}

By Theorem \ref{th:follow-main}, for any $x\in\mathbb{R}^n$ and $u_2(\cdot)\in\mathcal{U}_2[0,T]$, the follower's optimal response admits the affine representation \eqref{eq:3u}, where $\mathcal{M}_{1,1}:\mathbb{R}^n\to\mathcal{U}_1[0,T]$ and $\mathcal{M}_{1,2}:\mathcal{U}_2[0,T]\to\mathcal{U}_1[0,T]$ are bounded linear operators and $\mathcal{M}_{1,3}\in\mathcal{U}_1[0,T]$.

Substituting \eqref{eq:3u} into the state equation \eqref{state1} yields
\begin{equation}\label{state3}
	\left\{
	\begin{aligned}
		\drm X(t) &= \bigl[(\cA_1 X)(t) + (\cA_2 \bar X)(t)
		+ (\tilde{\cB}_2 u_2)(t) + \tilde b(t)\bigr]\drm t\\
		&\quad + \bigl[(\cC_1 X)(t) + (\cC_2 \bar X)(t)
		+ (\tilde{\cD}_2 u_2)(t) + \tilde\sigma(t)\bigr]\drm W(t),\\
		X(0) &= x,
	\end{aligned}\right.
\end{equation}
where
\[
\tilde{\cB}_2 = \cB_1 \mathcal{M}_{1,2} + \cB_2, \qquad \tilde{\cD}_2 = \cD_1 \mathcal{M}_{1,2} + \cD_2,
\]
\[
\tilde b = \cB_1 \mathcal{M}_{1,1} x + \cB_1 \mathcal{M}_{1,3} + b, \qquad \tilde\sigma = \cD_1 \mathcal{M}_{1,1} x + \cD_1 \mathcal{M}_{1,3} + \sigma.
\]

Since $\mathcal{M}_{1,2}:\mathcal{U}_2[0,T]\to\mathcal{U}_1[0,T]$ is bounded and $\cB_1,\cB_2,\cD_1,\cD_2$ satisfy (H1), the aggregated coefficients $\tilde{\cB}_2$ and $\tilde{\cD}_2$ remain in $\mathcal{L}^\infty_\mathbb{F}(L^2_{\mathcal{F}_T}(\mathbb{R}^{m_2});L^2_{\mathcal{F}_T}(\mathbb{R}^n))$. Likewise, the inhomogeneous terms satisfy $\tilde b(\cdot),\tilde\sigma(\cdot)\in L^2_\mathbb{F}(\mathbb{R}^n)$. The leader's cost functional is given by \eqref{cost0} with the index $i=2$. 

Thus, Problem (MFSOLQ-L) can be solved using the same approach applied to Problem (MFSOLQ-F). Specifically, by introducing the corresponding extended Lagrange multipliers $\bm{\lambda}_2 = (\lambda_2, \tilde{\lambda}_2)$ and imposing the analogous expectation constraints $\bm{\eta}_2 = (\alpha_2, \beta_2)$ for the leader, we can fully characterize the leader's optimal control.  

To avoid redundancy, we omit the detailed derivations and directly present the complete characterization of the leader's problem in the following main theorem.

\begin{theorem}\label{th:leader_main}
	Let Assumptions (H1) and (H2) hold. For any fixed $x \in \mathbb{R}^n$, the unique optimal control $\tilde{u}_2^\ast(\cdot)$ for Problem (MFSOLQ-L) is given by:
	\begin{equation}\label{eq:OC2}
		\tilde{u}_2^\ast(\cdot) = -R^{-1}_2 \left[\tilde{\cB}_2^{\top} Y^{\bm{\eta}_2^\ast,\bm{\lambda}_2^\ast} + \tilde{\cD}_2^{\top} Z^{\bm{\eta}_2^\ast,\bm{\lambda}_2^\ast} + \lambda_2^\ast \right],
	\end{equation}
	where the associated optimal state processes $(X^{\bm{\eta}_2^\ast,\bm{\lambda}_2^\ast}(\cdot), Y^{\bm{\eta}_2^\ast,\bm{\lambda}_2^\ast}(\cdot), Z^{\bm{\eta}_2^\ast,\bm{\lambda}_2^\ast}(\cdot))$ solve the following coupled linear FBSDE:
	\begin{equation}\label{eq:4so-u}
		\left\{	
		\begin{aligned}
			\drm X^{\bm{\eta}_2^\ast,\bm{\lambda}_2^\ast}(s) &= \Big[\cA_1X^{\bm{\eta}_2^\ast,\bm{\lambda}_2^\ast} + \cA_2\beta_2^\ast - \tilde{\cB}_2R_2^{-1}(\tilde{\cB}_2^{\top}Y^{\bm{\eta}_2^\ast,\bm{\lambda}_2^\ast} + \tilde{\cD}_2^{\top}Z^{\bm{\eta}_2^\ast,\bm{\lambda}_2^\ast} + \lambda_2^\ast) + \tilde{b}\Big]\drm s \\
			&\quad + \Big[\cC_1X^{\bm{\eta}_2^\ast,\bm{\lambda}_2^\ast} + \cC_2\beta_2^\ast - \tilde{\cD}_2R_2^{-1}(\tilde{\cB}_2^{\top}Y^{\bm{\eta}_2^\ast,\bm{\lambda}_2^\ast} + \tilde{\cD}_2^{\top}Z^{\bm{\eta}_2^\ast,\bm{\lambda}_2^\ast} + \lambda_2^\ast) + \tilde{\sigma}\Big]\drm W(s), \\			
			\drm Y^{\bm{\eta}_2^\ast,\bm{\lambda}_2^\ast}(s) &= -\Big[\cA_1^{\top}Y^{\bm{\eta}_2^\ast,\bm{\lambda}_2^\ast} + \cC_1^{\top}Z^{\bm{\eta}_2^\ast,\bm{\lambda}_2^\ast} + Q_2X^{\bm{\eta}_2^\ast,\bm{\lambda}_2^\ast} + \tilde{\lambda}_2^\ast\Big]\drm s + Z^{\bm{\eta}_2^\ast,\bm{\lambda}_2^\ast}\drm W(s), \\
			X^{\bm{\eta}_2^\ast,\bm{\lambda}_2^\ast}(0) &= x, \qquad Y^{\bm{\eta}_2^\ast,\bm{\lambda}_2^\ast}(T) = G_2X^{\bm{\eta}_2^\ast,\bm{\lambda}_2^\ast}(T).
		\end{aligned}
		\right.
	\end{equation}
	
	Furthermore, the optimal extended Lagrange multipliers $\bm{\lambda}_2^\ast = (\lambda_2^\ast, \tilde{\lambda}_2^\ast)$ are explicitly determined by the expectation constraints $\bm{\eta}_2^\ast = (\alpha_2^\ast, \beta_2^\ast)$ through the analogous affine representation derived for the follower's problem. This optimal pair $(\alpha_2^\ast, \beta_2^\ast) \in (\mathbb{L}^2)^2$ is the unique solution to the following operator equation:
	\begin{equation}\label{eq:4eo} 
		(\tilde{\mathcal{W}} + \tilde{\mathcal{K}}_{23}^\ast\tilde{\mathcal{T}}\tilde{\mathcal{K}}_{23})\cdot(\alpha^\ast_2,\beta_2^\ast)^\top + \tilde{\mathcal{K}}_{23}^\ast\tilde{\mathcal{T}}\tilde{\mathcal{K}}_{14}\cdot x + \tilde{\mathcal{K}}_{23}^\ast\tilde{\mathcal{T}}\tilde{\mathcal{K}}_5 = (0,0)^\top,
	\end{equation}
	where $\tilde{\mathcal{W}}=\begin{pmatrix} \bar{R}_2&0\\ 0&\bar{Q}_2 \end{pmatrix}$, $\tilde{\mathcal{K}}_{23}=\begin{pmatrix} \tilde{\mathcal{K}}_{1,2}&\tilde{\mathcal{K}}_{1,3}\\ \tilde{\mathcal{K}}_{2,2}&\tilde{\mathcal{K}}_{2,3}\\ \tilde{\mathcal{K}}_{3,2}&\tilde{\mathcal{K}}_{3,3} \end{pmatrix}$, $\tilde{\mathcal{T}}=\begin{pmatrix} R_2&0&0\\ 0&Q_2&0\\ 0&0&G_2 \end{pmatrix}$, $\tilde{\mathcal{K}}_{14}=\begin{pmatrix} \tilde{\mathcal{K}}_{1,1}&\tilde{\mathcal{K}}_{1,4}\\ \tilde{\mathcal{K}}_{2,1}&\tilde{\mathcal{K}}_{2,4}\\ \tilde{\mathcal{K}}_{3,1}&\tilde{\mathcal{K}}_{3,4} \end{pmatrix}$, and $\tilde{\mathcal{K}}_5 = (\tilde{\mathcal{K}}_{1,5},\tilde{\mathcal{K}}_{2,5},\tilde{\mathcal{K}}_{3,5})^\ast$.
\end{theorem}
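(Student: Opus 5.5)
The leader's problem will be reduced to the follower's three-stage ELM scheme (Problems (F-1)--(F-3)), applied to the reduced dynamics \eqref{state3} and the cost $J_2$. The first step is to check that \eqref{state3} is a system of the form \eqref{state1} satisfying (H1). The follower's control now enters only through the feedback representation \eqref{eq:3u}. By boundedness of $\mathcal M_{1,2}$, the operators $\tilde{\cB}_2,\tilde{\cD}_2$ lie in $\mathcal L^\infty_{\mathbb F}$, and $\tilde b,\tilde\sigma\in L^2_{\mathbb F}(\mathbb R^n)$. Lemma~\ref{lem:4.1} and Lemma~\ref{lem:esYZ} then apply with $(\cB_1,\cD_1)$ replaced by $(\tilde{\cB}_2,\tilde{\cD}_2)$ and with no second control. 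The leader's cost is exactly $J_2$ with the follower response substituted. The weights $R_2,\bar R_2,\bar Q_2$ are uniformly positive by (H2), and the state is affine in $u_2$. Hence the argument of Theorem~\ref{Th:3.1} gives strict convexity of $u_2\mapsto J_2(\tilde u_1[x,u_2],u_2)$, and hence existence and uniqueness of $\tilde u_2^\ast$, via the coercivity argument used for Theorem~\ref{Th:3.2}.

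Next I will introduce $\bm\eta_2=(\alpha_2,\beta_2)$ for $\mathbb E u_2$ and $\mathbb E X$, with the constraint state \eqref{eq:st-cons} (now $\beta_2$ in place of $\bar X$) and multipliers $\bm\lambda_2$. The relaxed cost is $\hat J_2^{\bm\eta_2}$, defined as in \eqref{eq:cos-ELM1}. Lemma~\ref{lem:sc} and Lemma~\ref{lem:F1} carry over verbatim, as does the min--max interchange via Bonnans--Shapiro. For fixed $(\bm\eta_2,\bm\lambda_2)$ I will perturb $u_2\mapsto u_2+\epsilon v$ and differentiate. I will then apply Itô's formula to $\langle Y,X^v\rangle$, where $X^v$ solves the variational equation driven by $\tilde{\cB}_2v,\tilde{\cD}_2v$ and $Y$ solves the adjoint BSDE in \eqref{eq:4so-u}. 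This gives the duality identity
\[
\langle D_{u_2}\hat J_2^{\bm\eta_2},v\rangle = 2\,\mathbb E\int_0^T\langle R_2u_2+\tilde{\cB}_2^{\top}Y+\tilde{\cD}_2^{\top}Z+\lambda_2,\,v\rangle\,\mathrm ds .
\]
This is the leader analogue of Theorem~\ref{th:3.4}. It yields \eqref{eq:OC2}, since $R_2$ is invertible under (H2). Substituting \eqref{eq:OC2} into the state equation gives the coupled system \eqref{eq:4so-u}, whose unique solvability follows as in Lemma~\ref{lem:3.6}.

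Then I will treat the multiplier stage. The analogue of Lemma~\ref{lem:3.7} recovers $\mathbb E u_2=\alpha_2$ and $\mathbb E X=\beta_2$ at stationary $\bm\lambda_2^\ast$. By linearity of \eqref{eq:4so-u}, I write $X,Y,Z,u_2$ affinely in $(x,\bm\lambda_2,\bm\eta_2)$. This produces an operator equation of type \eqref{eq:laeq}, which I solve in both the invertible and the noninvertible cases. In the noninvertible case I use the same kernel-inclusion argument: strict convexity forces $\ker\tilde{\cO}_1\subseteq\ker\bm{\cP}_{4,\lambda}$. The result is an affine representation of $u_2$, $X$ and $X(T)$ in $(\alpha_2,\beta_2,x)$ through operators $\tilde{\cK}_{i,j}$. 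Here the $u_2$-slot of the follower's $\cK_{i,4}$ is now absent, so $\tilde{\cK}_{14}$ acts on $x$ only.

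Finally I will establish the Problem (F-3) analogue. I will substitute these representations into the cost to obtain a quadratic functional $\tilde J_2(\alpha_2,\beta_2)$ of the form
\[
\langle \tilde{\mathcal W}\bm\eta_2,\bm\eta_2\rangle+\langle\tilde{\mathcal T}(\tilde{\cK}_{23}\bm\eta_2+\tilde{\cK}_{14}x+\tilde{\cK}_5),\,\tilde{\cK}_{23}\bm\eta_2+\tilde{\cK}_{14}x+\tilde{\cK}_5\rangle .
\]
Setting its Fréchet derivative to zero gives \eqref{eq:4eo}. Unique solvability holds because $\tilde{\mathcal W}\ge\delta I$ by (H2) and $\tilde{\cK}_{23}^\ast\tilde{\mathcal T}\tilde{\cK}_{23}\ge0$. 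The operator is therefore coercive and self-adjoint, so Lax--Milgram applies, as in Lemma~\ref{lem:cc} and Theorem~\ref{th:3.10}.

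I expect the main obstacle to be the multiplier step, namely justifying that $\bm\lambda_2^\ast$ exists and that $u_2$ is independent of the choice in the affine solution set. The operators $\tilde{\cB}_2,\tilde{\cD}_2$ are now nonlocal in time through $\mathcal M_{1,2}$, so the adjoints $\tilde{\cB}_2^\top$ must be understood as Hilbert-space adjoints composed with conditional expectation. I would first verify that the duality identity above still holds with these adjoints, and only then rerun the regular-point argument of Luenberger.
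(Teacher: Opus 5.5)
Your proposal follows essentially the same route as the paper, which proves Theorem~\ref{th:leader_main} only by observing that, once \eqref{eq:3u} is substituted to give \eqref{state3}, the follower's ELM scheme (Problems (F-1)--(F-3), Lemma~\ref{lem:3.7}, the kernel-inclusion argument and Theorem~\ref{th:3.10}) carries over verbatim with $J_2$ in place of $J_1$; your remark that $\tilde{\cB}_2^{\top},\tilde{\cD}_2^{\top}$ must be read as Hilbert-space adjoints on $L^2_{\FF}$, because $\mathcal{M}_{1,2}$ is nonlocal in time, is a refinement the paper omits.

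One caution on the multiplier step you flag, a weakness the paper shares: the regular-point condition cannot simply be rerun, because the constraint derivative $v\mapsto(\EE v,\EE X^{v})$ always has a second component that is absolutely continuous with $\EE X^{v}(0)=0$, so it is not onto $(\LL^2)^2$; at the optimum it is safer to get the multipliers directly from the mean-field maximum principle (the analogue of Theorem~\ref{Th:3.2}) by setting $\beta_2^\ast=\bar X$, $\lambda_2^\ast=\EE[\bar R_2]\,\EE[\tilde u_2^\ast]$ and $\tilde\lambda_2^\ast=\EE[\bar Q_2]\bar X+\EE[\cA_2^\ast Y+\cC_2^\ast Z]$, which converts that optimality system exactly into \eqref{eq:OC2}--\eqref{eq:4so-u}.
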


\section{Numerical Implementation and Validation}
\label{sec:5}
In this section, we numerically validate the optimal controls $\tilde{u}_1$ and $\tilde{u}_2$ derived in Sections~\ref{sec:3} and~\ref{sec:4}. For systems with stochastic operator-valued coefficients, the associated stochastic operator-valued Riccati equations preclude the direct application of conventional PDE-based methods. To address this challenge and solve our problem numerically, we develop the Deep FBSDE Picard Solver (DFPS). This framework integrates Picard fixed-point iterations with neural parameterization to resolve the coupled leader–follower FBSDE system, thereby bypassing the explicit construction of stochastic Riccati equations.

The remainder of this section is organized as follows. Subsection~\ref{sub:dis} details the discretization and parameterization of the proposed algorithm. This includes the network architectures, the enforcement of mean-field consistency, the augmented Lagrangian formulation with asymptotic feasibility guarantees, and a summary of the complete numerical procedure. Subsection~\ref{sec:experiments} then provides comprehensive numerical experiments to illustrate the algorithm's performance and robustness. Specifically, these experiments encompass convergence and feasibility diagnostics, discretization sensitivity analysis, a Riccati sanity check, ablation studies, equilibrium validation, and a financial application.

\subsection{Discretization and Parameterization}\label{sub:dis}
To transition from the theoretical framework to numerical simulation, we restrict our state and control variables to finite-dimensional Euclidean spaces. In this concrete setting, the abstract bounded linear operators $\mathcal{A}_i(\cdot), \mathcal{B}_i(\cdot), \mathcal{C}_i(\cdot)$, and $\mathcal{D}_i(\cdot)$ defined in Assumption (H1) are naturally realized as adapted matrix-valued stochastic processes, which we denote by $A_i(\cdot), B_i(\cdot), C_i(\cdot)$, and $D_i(\cdot)$, respectively. Consequently, the abstract operator actions reduce to standard matrix-vector multiplications.

\begin{remark}\label{rem:op_vs_matrix}
The operator-valued formulation used in Section~\ref{sec2} is not 
merely a matter of abstraction. In the Stackelberg setting of this 
paper, solving the follower's problem induces an affine operator 
representation of the optimal response, in which the operators act 
on the initial state, the leader's control, and a non-homogeneous 
term. After substituting this follower response into the state 
equation, the leader faces effective dynamics with random 
operator-valued coefficients, even when the primitive coefficients 
are finite-dimensional stochastic matrices. In the numerical 
implementation, these operators are realized through adapted 
matrix-valued processes and the response sensitivities 
$\mathcal{M}_{1,1,k}$ and $\mathcal{M}_{1,2,k}$ (see~\eqref{eq:3u}), which are extracted from the trained follower 
network via automatic differentiation.
\end{remark}
We discretize the continuous-time model on the uniform grid
\[
\pi_N:\ 0=t_0<t_1<\cdots<t_N=T,
\qquad \Delta t = T/N.
\]
Let $\Delta W_k^{(m)} \sim \mathcal{N}(0,\Delta t\,I_d)$ be independent Brownian increments. Then, for the $m$-th simulated path ($m=1,\dots,M$), the state equation \eqref{state1} is discretised via the Euler--Maruyama scheme as
\begin{equation}\label{eq:euler}
	\begin{aligned}
		X^{(m)}_{k+1}
		=&\, X^{(m)}_k
		+ \bigl[ A_{1,k}^{(m)} X^{(m)}_k + A_{2,k}^{(m)}\bar{X}_k
		+ B_{1,k}^{(m)} u^{(m)}_{1,k} + B_{2,k}^{(m)} u^{(m)}_{2,k}
		+ b_k^{(m)} \bigr]\Delta t \\
		&+ \bigl[ C_{1,k}^{(m)} X^{(m)}_k + C_{2,k}^{(m)}\bar{X}_k
		+ D_{1,k}^{(m)} u^{(m)}_{1,k} + D_{2,k}^{(m)} u^{(m)}_{2,k}
		+ \sigma_k^{(m)} \bigr]\Delta W_k^{(m)} .
	\end{aligned}
\end{equation}
Here, $X_k^{(m)}$ and $u_{i,k}^{(m)}$ ($i=1,2$) denote the state and control at time $t_k$ along the $m$-th sample path. The coefficients $A_{i,k}^{(m)}$, $B_{i,k}^{(m)}$, $C_{i,k}^{(m)}$, and $D_{i,k}^{(m)}$ ($i=1,2$), as well as $b_k^{(m)}$ and $\sigma_k^{(m)}$, are the pathwise realizations of the stochastic coefficients satisfying Assumption~(H1); for instance, $A_{i,k}^{(m)}=A_i(t_k,\omega^{(m)})$. In addition, $
\bar{X}_k=\frac{1}{M}\sum_{m=1}^M X_k^{(m)}$ is the empirical mean over all simulated paths, and $\bar{X}_k\to \EE[X(t_k)]$ as $M\to\infty$ by the law of large numbers.

In the numerical experiments reported in Section \ref{sec:experiments}, the coefficients are sampled per 
scenario and kept fixed along the time grid, i.e.,  $A_{i,k}^{(m)}=A_i^{(m)}$ and similarly for the other system matrices.  This corresponds to a piecewise-constant realization of the underlying  adapted random coefficients and keeps the context dimension manageable.  The same DFPS architecture extends to genuinely time-varying adapted  coefficients $A_i(t_k,\omega^{(m)})$ by including their time-grid realizations, or suitable low-dimensional summaries, in the context variable $\xi$.

\subsubsection{Network Architectures}
Based on the above discretization, the unknown quantities in the discrete leader--follower FBSDE system are approximated by feedforward neural networks. More specifically, the adjoint processes $(Y_k,Z_k)$, the mean-field terms   $\EE[u_i(t)]$ and $\EE[X(t)]$, and the Lagrange multipliers are parameterized by networks referred to as AdjointNets, MacroNets, and LambdaNets, respectively. The corresponding network configurations are summarized in Table~\ref{tab:networks}.

\begin{table}[htpb]
	\centering
	\caption{Network architectures and initialization parameters. All hidden layers use Tanh activation functions.}
	\label{tab:networks}
	\begin{tabular}{llccc}
		\toprule
		\textbf{Network} & \textbf{Input features} & \textbf{Hidden layers} & \textbf{Width} & \textbf{Output gain} \\
		\midrule
		AdjointNet (follower) & $t_k, X_k, \xi, u_2$ & 4 & 128 & 0.05 \\
		AdjointNet (leader)   & $t_k, X_k, \xi$      & 4 & 128 & 0.05 \\
		MacroNet              & $t_k, \xi$           & 4 & 128 & 0.10 \\
		LambdaNet             & $t_k, \xi$           & 3 & 64  & 0.01 \\
		\bottomrule
	\end{tabular}
\end{table}
 Here, $\xi$ denotes the vectorized context consisting of the model coefficients and cost parameters $
\xi=\mathrm{vec}(A_1,A_2,B_1,B_2,C_1,C_2,D_1,D_2,Q_1,Q_2,R_1,R_2,
G_1,G_2,\bar Q_1,\bar Q_2,\bar R_1,\bar R_2). $ By conditioning on the context variable $\xi$, all networks can be trained to accommodate different realizations of the model coefficients within a single framework, thereby avoiding retraining for each individual scenario.

Note that the output layer of each network in Table~\ref{tab:networks}  is linear (without a bounding activation), so in particular the  LambdaNet output is not artificially constrained to a bounded range; the bounded-error condition in Assumption~\ref{ass:bounded_errors}  below is therefore imposed on the inexact dual update rather than on a hard-bounded multiplier range.

\subsubsection{Mean-field Consistency}
A direct Monte Carlo plug-in approximation of the mean-field terms is not adequate in the present setting. The mean-field quantities
$\mathbb{E}[X(t)]$ and $\mathbb{E}[u_i(t)]$ are endogenous equilibrium objects rather than exogenous coefficients. Replacing them with batch-wise empirical averages would externalize these endogenous processes and treat them merely as noisy sample statistics. Such a plug-in treatment may reduce the FBSDE residual
on a given batch, but it does not by itself enforce the fixed-point consistency between the macroscopic mean-field variables and the trajectories induced by the current policies. Therefore, DFPS parameterizes the mean-field processes through MacroNets and employs an augmented Lagrangian mechanism to enforce their
agreement with the empirical Monte Carlo averages.

The outputs of the MacroNets are denoted by $\alpha_i$ and $\beta_i$, which are used to approximate the corresponding mean-field quantities. Since these terms describe macroscopic population behavior, the MacroNets depend only on the time variable $t_k$ and the context $\xi$, and do not take individual sample states as inputs. In the numerical implementation, their outputs are trained to match the empirical averages over simulated trajectories through the consistency conditions
\begin{equation}\label{eq:mf_consist}
	\alpha_i(t_k,\xi)\approx \frac{1}{M}\sum_{m=1}^M u_{i,k}^{(m)},
	\qquad
	\beta_i(t_k,\xi)\approx \frac{1}{M}\sum_{m=1}^M X_k^{(m)}.
\end{equation}
These relations are imposed for all $k=0,\dots,N-1$ and for each
coefficient scenario $\xi$.

\subsubsection{Augmented Lagrangian Formulation and Asymptotic Feasibility}\label{sub:lagrangian}
To enforce the consistency constraints in computation, we adopt an
augmented Lagrangian formulation. Motivated by the relaxation introduced in the theoretical analysis, we associate the constraints with Lagrange multipliers $\lambda_{u,i}$ and $\lambda_{x,i}$ ($i=1,2$), which are parameterized by context-conditional neural networks referred to as LambdaNets. Each LambdaNet defines a mapping $\lambda_{\cdot}(t,\xi;\phi):[0,T]\times\mathbb{R}^{d_c}\to\mathbb{R}^d,$ with inputs $(t,\xi)$, where $d_c$ denotes the dimension of the context vector $\xi$.

For agent $i$, the augmented Lagrangian is given by
\begin{equation}\label{eq:lagrangian}
	\mathcal{L}_i = J_i^N(\hat{u})+\mathcal{L}_N^{(i)}(\theta_i)
	+\langle \lambda_{u,i},\,\bar{u}_i-\alpha_i\rangle_{\Delta t}
	+\langle \lambda_{x,i},\,\bar{X}-\beta_i\rangle_{\Delta t}
	+\frac{\rho_{u,i}}{2}\|\bar{u}_i-\alpha_i\|_{\Delta t}^2
	+\frac{\rho_{x,i}}{2}\|\bar{X}-\beta_i\|_{\Delta t}^2.
\end{equation}
where $J_i^N(\hat{u})$ denotes the discretized empirical cost functional with $\hat u=(\hat u_1,\hat u_2)$ denoting the pair of controls induced by the current network parameters, $\mathcal{L}_N^{(i)}(\theta)$ represents the FBSDE residual loss parameterized by the primal network weights $\theta_i $ (i.e., the weights of the AdjointNets and MacroNets governing the outputs $Y_i, Z_i, \alpha_j$, and $\beta_j$ for $j \in \{1,2\}$), and $\bar{u}_i = \frac{1}{M}\sum_{m=1}^M u_{i}^{(m)}$ is the empirical mean of the control.  The parameters $\rho_{u,i}, \rho_{x,i} > 0$ act as the penalty coefficients. Furthermore, the discrete temporal inner product and its induced norm are defined respectively as
\begin{equation*}
	\langle f, g \rangle_{\Delta t} = \Delta t \sum_{k=0}^{N-1} f_k g_k, \qquad 
	\|f\|_{\Delta t} = \Bigl(\Delta t\sum_{k=0}^{N-1}f_k^2\Bigr)^{1/2}.
	\end{equation*}

The dual variables, parameterized by the LambdaNet weights $\phi$, are updated by minimizing the dual loss:
\begin{equation}\label{eq:dual_update}
	\mathcal{L}_{\lambda}(\phi) = -\langle \lambda(\cdot;\phi),\,\mathrm{viol}\rangle_{\Delta t}
	+\frac{\eta}{2}\|\lambda(\cdot;\phi)-\lambda^{\mathrm{prev}}\|^2,
\end{equation}
where $\mathrm{viol} \in \{\bar{u}_i-\alpha_i, \bar{X}-\beta_i\}$ denotes the corresponding mean-field consistency residual vector, $\lambda^{\mathrm{prev}}$ is the dual snapshot from the preceding optimizer step, and $\eta>0$ is the proximal step-size parameter. The proximal term is included to stabilize the dual update under noisy Monte Carlo gradients.

In the numerical implementation, the penalty coefficients $\rho_{u,i}$ and $\rho_{x,i}$ are scaled by a factor $\tau=1.1>1$ whenever the constraint violation fails to improve by more than $5\%$ relative to the previous Picard iteration. These scalar constraint violations are monitored through the norms
\begin{equation}\label{eq:viol_def}
	V_{u,i} := \|\bar{u}_i-\alpha_i\|_{\Delta t}, \qquad
	V_{x,i} := \|\bar{X}-\beta_i\|_{\Delta t}.
\end{equation}
The multiplicative growth factor $\tau>1$ ensures that the penalty sequence $\{\rho_{v,i}^{(p)}\}$ diverges whenever the stagnation-triggered update is activated infinitely often, providing the asymptotic mechanism formalized in Proposition~\ref{prop:alm_bound} below.

To quantify the effect of inexact dual updates and to formalize the
penalty-induced feasibility mechanism, fix an agent $i$ and a constraint type $v\in\{u,x\}$, and define
\[
r_{u,i}^{(p)}:=\bar{u}_i^{(p)}-\alpha_i^{(p)},\qquad
r_{x,i}^{(p)}:=\bar{X}^{(p)}-\beta_i^{(p)} .
\]
The LambdaNet residual at Picard iteration $p$ is defined as
\begin{equation}\label{eq:eps_net_def}
\varepsilon_{\rm net}^{(p)} := \lambda_{v,i}^{(p+1)}-
\lambda_{v,i}^{(p)}-\rho_{v,i}^{(p)}\,r_{v,i}^{(p)}.
\end{equation}
By construction, $\varepsilon_{\rm net}^{(p)}$ collects all sources of deviation between the actual LambdaNet update and the nominal ALM ascent direction $\rho_{v,i}^{(p)}\,r_{v,i}^{(p)}$, including the LambdaNet parameterization residual, the finite-$N_C$ stochastic gradient descent (SGD) horizon, and the effect of the proximal regularization. Assumption~\ref{ass:bounded_errors} below requires the cumulative effect of these contributions to remain uniformly bounded. We also define $\varepsilon_{\rm opt}^{(p)}:=\|\lambda_{v,i}^{(p+1)}-\lambda_{v,i}^{*,(p)}\|$ 
as the dual subproblem optimization error, where $\lambda_{v,i}^{*,(p)}$ is the exact minimizer of \eqref{eq:dual_update}.

\begin{assumption}\label{ass:bounded_errors}
	There exist constants $\bar\varepsilon_{\rm opt}, \bar\varepsilon_{\rm net}<\infty$ such that, at every iteration $p$, the dual subproblem error and the LambdaNet approximation residual satisfy
	\begin{equation*}
		\varepsilon_{\rm opt}^{(p)}\le \bar\varepsilon_{\rm opt},\qquad
		\|\varepsilon_{\rm net}^{(p)}\|\le \bar\varepsilon_{\rm net}.
	\end{equation*}
\end{assumption}

Under Assumption~\ref{ass:bounded_errors}, the following proposition shows that the constraint violation decreases as the penalty parameters increase.

\begin{proposition}\label{prop:alm_bound}
Let Assumption \ref{ass:bounded_errors} hold. For a fixed constraint type $v\in\{u,x\}$, define the residual norm $\mathcal{R}_{v,i}^{(p)}:=\|r_{v,i}^{(p)}\|$. If the penalty sequence is non-decreasing and the inexact LambdaNet update satisfies \eqref{eq:eps_net_def}, then, for every $p$ with $\rho_{v,i}^{(p)}>\eta^{-1}$, the violation is bounded by
\begin{equation}\label{eq:viol_bound}
	\mathcal{R}_{v,i}^{(p)}\le\frac{\bar\varepsilon_{\rm opt}+\bar\varepsilon_{\rm net}}{\rho_{v,i}^{(p)}-\eta^{-1}}.
\end{equation}
In particular, if the adaptive scheme drives $\rho_{v,i}^{(p)}\to\infty$, then $\mathcal{R}_{v,i}^{(p)}\to 0$.
\end{proposition}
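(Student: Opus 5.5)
Fix the agent $i$ and the constraint type $v$. I will work directly from the definition \eqref{eq:eps_net_def}, which expresses the nominal ascent step as the difference of two consecutive multipliers minus a residual:
\[
\rho_{v,i}^{(p)}\,r_{v,i}^{(p)}=\lambda_{v,i}^{(p+1)}-\lambda_{v,i}^{(p)}-\varepsilon_{\rm net}^{(p)} .
\]
Taking norms gives $\rho_{v,i}^{(p)}\mathcal{R}_{v,i}^{(p)}\le\|\lambda_{v,i}^{(p+1)}-\lambda_{v,i}^{(p)}\|+\bar\varepsilon_{\rm net}$. So the whole task is to bound the multiplier increment $\|\lambda_{v,i}^{(p+1)}-\lambda_{v,i}^{(p)}\|$ by $\bar\varepsilon_{\rm opt}+\eta^{-1}\mathcal{R}_{v,i}^{(p)}$. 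Once that is done, the $\eta^{-1}\mathcal{R}$ term can be absorbed into the left-hand side.

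To control the increment, I will use the exact minimizer $\lambda_{v,i}^{*,(p)}$ of the proximal dual loss \eqref{eq:dual_update}, with $\lambda^{\rm prev}=\lambda_{v,i}^{(p)}$.

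1. The loss is strongly convex and quadratic in $\lambda$, so its first-order condition is $-\mathrm{viol}+\eta(\lambda^{*}-\lambda^{\rm prev})=0$. Hence $\lambda_{v,i}^{*,(p)}-\lambda_{v,i}^{(p)}=\eta^{-1}r_{v,i}^{(p)}$. I will treat the network parameterization as able to represent this target, and identify the discrete inner product and norm consistently.
2. The triangle inequality then gives
\[
\|\lambda_{v,i}^{(p+1)}-\lambda_{v,i}^{(p)}\|\le\|\lambda_{v,i}^{(p+1)}-\lambda_{v,i}^{*,(p)}\|+\|\lambda_{v,i}^{*,(p)}-\lambda_{v,i}^{(p)}\|\le\bar\varepsilon_{\rm opt}+\eta^{-1}\mathcal{R}_{v,i}^{(p)},
\]
where the last step uses Assumption \ref{ass:bounded_errors}.

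Combining the two displays yields $(\rho_{v,i}^{(p)}-\eta^{-1})\mathcal{R}_{v,i}^{(p)}\le\bar\varepsilon_{\rm opt}+\bar\varepsilon_{\rm net}$. Dividing by the positive quantity $\rho_{v,i}^{(p)}-\eta^{-1}$ gives \eqref{eq:viol_bound}. For the limit statement: if $\rho_{v,i}^{(p)}\to\infty$, then eventually $\rho_{v,i}^{(p)}>\eta^{-1}$, and from that point on the bound is defined and its right-hand side tends to $0$. This also holds along the whole tail, because the penalty sequence is non-decreasing.

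The main obstacle is step 1, namely whether the exact minimizer of \eqref{eq:dual_update} is the plain proximal step $\eta^{-1}r$. This depends on how the two norms in \eqref{eq:dual_update} are matched: the linear term uses the $\langle\cdot,\cdot\rangle_{\Delta t}$ pairing, while the proximal term is written with an unspecified $\|\cdot\|$. I will fix $\|\cdot\|$ to be $\|\cdot\|_{\Delta t}$ so that the gradients match. Any remaining representational gap of the LambdaNet I will fold into $\varepsilon_{\rm opt}$, so that the bounded-error assumption carries exactly this discrepancy.
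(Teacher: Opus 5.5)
Your proposal is correct and follows the paper's proof essentially step for step. You rearrange \eqref{eq:eps_net_def}, bound the multiplier increment via the exact proximal minimizer $\lambda_{v,i}^{*,(p)}$ and the triangle inequality, and then absorb the $\eta^{-1}\mathcal{R}_{v,i}^{(p)}$ term. The only differences are cosmetic: you solve the first-order condition explicitly to get $\lambda_{v,i}^{*,(p)}-\lambda_{v,i}^{(p)}=\eta^{-1}r_{v,i}^{(p)}$, whereas the paper uses the $\eta$-strong-convexity bound $\|\lambda_{v,i}^{(p)}-\lambda_{v,i}^{*,(p)}\|\le\eta^{-1}\|\nabla_\lambda\mathcal{L}_\lambda^{(p)}(\lambda_{v,i}^{(p)})\|$, which gives the same estimate for this quadratic; your choice of $\|\cdot\|_{\Delta t}$ for the proximal norm states explicitly an assumption the paper leaves implicit.
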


\begin{proof}
	See Appendix~\ref{app:A}.
\end{proof}
Proposition~\ref{prop:alm_bound} establishes the theoretical guarantee that the mean-field consistency constraints are asymptotically satisfied as the penalty parameters increase. While a rigorous a posteriori error analysis---encompassing the neural approximation errors and the contraction of the Picard iteration under random operator-valued coefficients---is omitted due to strict space limitations, the comprehensive numerical convergence of the DFPS framework is extensively validated through the empirical diagnostics presented in Section~\ref{sec:experiments}.

The complete training procedure is summarised 
in Algorithm~\ref{alg:picard}.
\begin{algorithm}[H]
	\caption{Deep FBSDE Picard Solver (DFPS) for the Mean-Field Stackelberg LQ Game}
	\label{alg:picard}
	\begin{algorithmic}[1]
		\Require System matrices, time grid $\pi_N$, sample size $M$, exploratory scenarios $B$, Picard budget $P$, Picard tolerance $\varepsilon_{\mathrm{tol}}$, and inner-step budgets $(N_A, N_B, N_C)$.
		\Ensure Trained follower and leader networks.
		
		\State Initialize the AdjointNets, MacroNets, and LambdaNets; set the LambdaNet outputs to zero.
		
		\Statex \textbf{Stage I: Follower training}
		\State Sample exploratory leader-control scenarios and warm-start the follower MacroNets $(\alpha_F,\beta_F)$.
		\For{$p=0,\ldots,P-1$}
		\State Simulate follower trajectories under a mini-batch of exploratory leader controls.
		\State Update $(Y_1,Z_1)$ by minimizing the follower primal FBSDE residual ($N_A$ steps; MacroNets and LambdaNets fixed).
		\State Update $(\alpha_F,\beta_F)$ by minimizing the follower mean-field consistency loss ($N_B$ steps; AdjointNets and LambdaNets fixed).
	    \If{$V_{u,1}>\varepsilon_{\mathrm{tol}}$ or     $V_{x,1}>\varepsilon_{\mathrm{tol}}$}
		\State Update $(\lambda_{u,1},\lambda_{x,1})$ via a dual augmented-Lagrangian step ($N_C$ steps).
		\EndIf
		\If{the follower relative Picard error is below $\varepsilon_{\mathrm{tol}}$}
		\State \textbf{break}
		\EndIf
		\EndFor
		
		\Statex \textbf{Stage II: Follower response extraction}
		\State Freeze the trained follower networks.
		\State Extract the affine response sensitivities $\mathcal{M}_{1,1,k}$ and $\mathcal{M}_{1,2,k}$ via automatic differentiation of the trained follower response map.
		\State Construct the follower-induced response maps and the aggregated leader coefficients.
		
		\Statex \textbf{Stage III: Leader training}
		\State Warm-start the leader MacroNets $(\alpha_L,\beta_L)$.
		\For{$p=0,\ldots,P-1$}
		\State Simulate leader trajectories and evaluate the aggregated FBSDE generator using the frozen follower response maps.
		\State Update $(Y_2,Z_2)$ by minimizing the leader primal FBSDE residual ($N_A$ steps; MacroNets and LambdaNets fixed).
		\State Update $(\alpha_L,\beta_L)$ by minimizing the leader mean-field consistency loss ($N_B$ steps; AdjointNets and LambdaNets fixed).
		\If{$V_{u,2}>\varepsilon_{\mathrm{tol}}$ or $V_{x,2}>\varepsilon_{\mathrm{tol}}$}
		\State Update $(\lambda_{u,2},\lambda_{x,2})$ via a dual augmented-Lagrangian step ($N_C$ steps).
		\EndIf
		\If{the leader relative Picard error is below $\varepsilon_{\mathrm{tol}}$}
		\State \textbf{break}
		\EndIf
		\EndFor
	\end{algorithmic}
\end{algorithm}
 
The algorithm implements the saddle-point structure derived in
Section~\ref{sec2} through an alternating augmented Lagrangian scheme. Within each Picard iteration, the AdjointNets are updated by a primal FBSDE regression with the MacroNets and LambdaNets fixed; the MacroNets are then updated by a mean-field consistency regression with the AdjointNets and LambdaNets fixed; and, when the active-set condition is triggered, the LambdaNets are updated by
a dual-ascent step driven by the current consistency residual. Hence, the numerical procedure preserves the primal--dual structure of the theoretical optimality system while remaining implementable through block-coordinate neural optimization.

In the reported experiments, the inner optimization budgets are set to $N_A=600$, $N_B=600$, and $N_C=50$ gradient steps, respectively, together with a $500$-step MacroNet warm-start. The simulations use $N=100$ time intervals, $M=64$ sample paths per scenario, $B=48$ exploratory environments, and at most $P=20$ Picard iterations.

The DFPS framework is compatible with an optional joint Stackelberg refinement loop after Stage~III, in which the follower response map and the leader policy are updated in a fully coupled manner. In the present implementation, this refinement is not activated because the sequential extraction-and-training procedure already reaches the prescribed residual tolerance $\varepsilon_{\mathrm{tol}}$ across all reported scenarios. Thus, the sequential structure provides the desired numerical accuracy while avoiding the
additional overhead of a fully coupled bilevel refinement.

\paragraph{Computational cost}
The full DFPS pipeline involves approximately $25{,}000$ inner gradient steps per scenario. The reported experiments were completed in about 6 hours on a single NVIDIA Tesla T4 GPU. This offline cost is effectively amortized: once trained, the context-conditional framework solves new coefficient realizations without requiring retraining.

\subsection{Numerical Experiments}
\label{sec:experiments}
The numerical study is designed to validate DFPS from six complementary perspectives: numerical convergence and feasibility, discretization sensitivity, a Riccati sanity check in the constant-coefficient regime, ablation analysis, empirical Stackelberg optimality, and financial interpretation. Accordingly,
the experiments examine FBSDE residuals and mean-field consistency constraints, temporal refinement under constant and random coefficients, agreement with a classical Riccati baseline when such a baseline is available, the roles of response-sensitivity extraction and augmented Lagrangian enforcement, unilateral-deviation stability, and the economic implications of the stochastic portfolio application.

To evaluate the DFPS framework under random operator-valued coefficients, the state dynamics and cost parameters are independently sampled from the distributions specified in Table~\ref{tab:coeff_ranges}. The initial state is drawn from $X_0 \sim \mathcal{N}(0,\,0.1\,I_n)$. The Monte Carlo sample size is fixed at $M=64$; preliminary sensitivity analyses across $M\in\{16,32,64,128,256\}$ confirm that the empirical mean-field approximations robustly stabilize at this configuration, thereby mitigating the need for excessively large mini-batches.

\begin{table}[h]
	\centering
	\caption{Distributions and dimensions of the randomized state and cost coefficients.}
	\label{tab:coeff_ranges}
	\begin{tabular}{llc}
		\toprule
		\textbf{Coefficient} & \textbf{Distribution} & \textbf{Dimension} \\
		\midrule
		$A_1$ & Diagonal $\sim \mathcal{U}[-1.0, -0.4]$ & $n\times n$ \\
		$A_2$ & $\mathcal{U}[0.1, 0.4]$ & $n\times n$ \\
		$B_1$ & $\mathcal{U}[0.7, 1.3]$ & $n\times m_1$ \\
		$B_2$ & $\mathcal{U}[0.3, 0.8]$ & $n\times m_2$ \\
		$C_1$ & $\mathcal{U}[0.05, 0.15]$ & $n\times n$ \\
		$C_2, D_1, D_2$ & $\mathcal{U}[0.02, 0.08]$ & $n\times n$, $n\times m_i$ \\
		$b, \sigma$ & $\mathcal{U}[0.01, 0.5]$ & $\mathbb{R}^n$ \\
		$Q_i, R_i, G_i$ & Diagonal $\sim\mathcal{U}[0.99, 1.01]$ & $n\times n$, $m_i\times m_i$ \\
		$\bar{Q}_i, \bar{R}_i$ & Diagonal $\sim\mathcal{U}[0.099, 0.101]$ & $n\times n$, $m_i\times m_i$ \\
		\bottomrule
	\end{tabular}
\end{table}

\subsubsection{Numerical Convergence and Feasibility}
\label{ssec:convergence}

Figure~\ref{fig:training} presents the primary convergence diagnostics of the DFPS algorithm. Since the coefficients are random and the follower is trained under exploratory leader-control scenarios, the empirical cost is not expected to decrease
monotonically along Picard iterations. The relevant numerical question is therefore not monotone descent of $J_1$, but whether the optimality residuals and the mean-field consistency violations are driven to the prescribed tolerance while the realized costs remain statistically stable. This is precisely what is observed in Figure~\ref{fig:training}. Although $J_1$ exhibits moderate oscillations, the last ten Picard iterates concentrate
around a stable operating level, with mean $0.354$ and standard deviation $0.006$. In contrast, the follower BSDE residual decays by approximately three orders of magnitude and reaches $\mathcal E_1=3\times 10^{-4}$, while the leader BSDE residual reaches $\mathcal E_2=2\times 10^{-4}$ within five Picard iterations. The terminal mismatch
\[ \delta_T = \frac{\mathbb{E}[\|Y_1(T)-G_1X(T)\|]}
{\mathbb{E}[\|G_1X(T)\|]} =
1.20\%
\]
further indicates that the terminal condition of the follower adjoint equation is satisfied to a small relative error.

Panel~(c) illustrates the follower mean-field consistency violations under the adaptive augmented Lagrangian scheme. Both follower violations terminate below the prescribed tolerance $\varepsilon_{\mathrm{tol}}=0.02$. A more detailed view of the penalty adaptation and all four violations is provided in Figure~\ref{fig:rho_violation}.

The robustness of the training procedure is further validated across three independent random seeds $(42,123,248)$. The resulting coefficients of variation are low at $1.1\%$ for $J_1$ and $0.5\%$ for $J_2$, with $J_1=0.351\pm0.004$ and $J_2=0.211\pm0.001$. The maximum global violation norm
\begin{equation}
\|\nu\|_{\max}:=	\max(V_{u,1},V_{x,1},V_{u,2},V_{x,2})
=	0.0132\pm0.0005<\varepsilon_{\mathrm{tol}}=0.02
\end{equation}
also remains uniformly below the feasibility threshold. These observations provide empirical evidence that the sequential follower-response extraction and leader Picard training achieve stable residual and feasibility accuracy in the tested scenarios. Since the prescribed tolerances are already reached, we
keep the optional fully coupled Stackelberg refinement disabled in the reported experiments to reduce computational overhead.

\begin{figure}[h]
	\centering
	\includegraphics[width=0.9\linewidth]
	{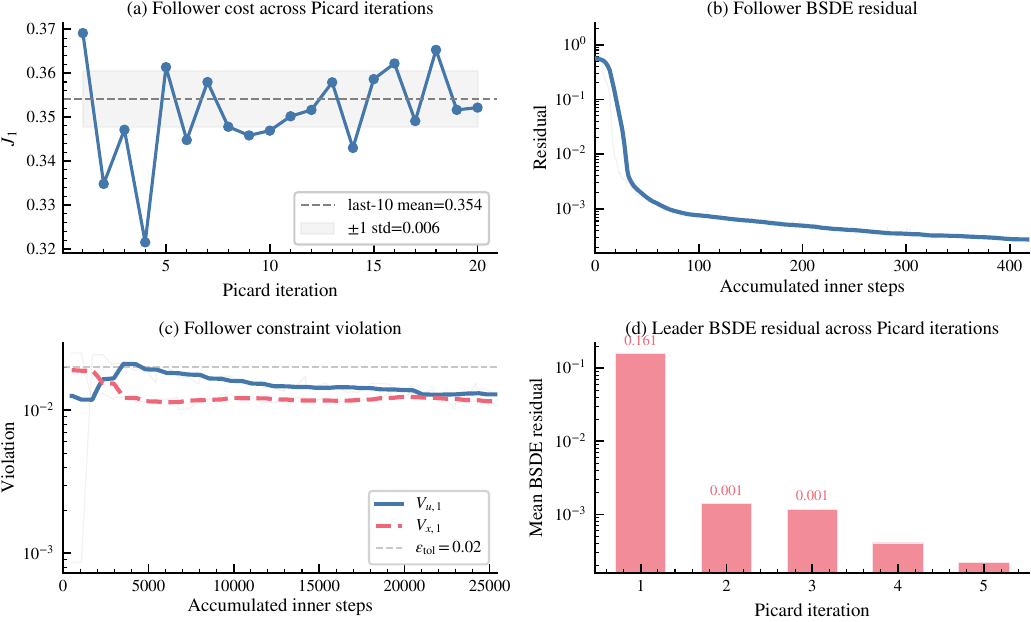}
	\caption{Training convergence of the 
		DFPS algorithm: (a)~follower cost 
		$J_1$ across Picard iterations 
		(dashed line: last-10 mean $=0.354$, 
		shaded band: $\pm1$ std), 
		(b)~follower BSDE residual, 
		(c)~mean-field constraint violations 
		$V_{u,1}$ and $V_{x,1}$ with tolerance 
		$\varepsilon_{\rm tol}=0.02$, 
		(d)~leader BSDE residual across Picard 
		iterations.}
	\label{fig:training}
\end{figure}

Figure~\ref{fig:rho_violation} further reports the coupled evolution of the mean-field consistency violations and the adaptive penalty parameters. The penalties are increased only when the corresponding violation stagnates. Despite transient early-stage increases, all four violations are eventually driven below $\varepsilon_{\mathrm{tol}}=0.02$, with final values
$V_{u,1}=0.0126$, $V_{x,1}=0.0107$, $V_{u,2}=0.0062$, and
$V_{x,2}=0.0047$. The follower penalties grow more substantially
$(\rho_{u,1}^{\mathrm{final}}=0.157,\rho_{x,1}^{\mathrm{final}}=0.314)$, whereas the leader penalties remain close to their initial values $(\rho_{u,2}^{\mathrm{final}}=0.055,\rho_{x,2}^{\mathrm{final}}=0.110)$. This supports the feasibility mechanism in Proposition~\ref{prop:alm_bound} and indicates that the follower-response extraction yields a well-conditioned
leader initialization, reducing the need for aggressive penalty adaptation.

\begin{figure}[h]
	\centering
	\includegraphics[width=0.95\linewidth]
	{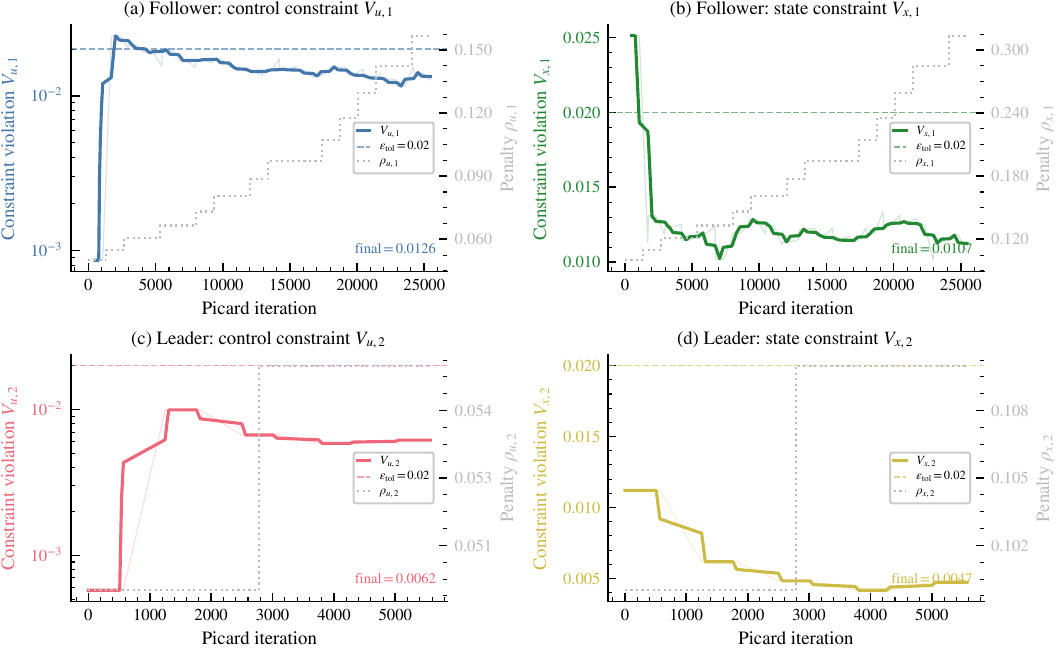}
	\caption{Adaptive ALM diagnostics: constraint violations 
		(left axes, log scale) and penalty parameters $\rho$ 
		(right axes, dotted) over accumulated inner optimization steps. 
		Dashed horizontal line: tolerance 
		$\varepsilon_{\mathrm{tol}}=0.02$. 
		All four violations terminate below tolerance, consistent with 
		Proposition~\ref{prop:alm_bound}.}
	\label{fig:rho_violation}
\end{figure}

\subsubsection{Discretization Sensitivity}

\paragraph{Temporal discretization convergence}
We first study the effect of the number of time steps $N$ in a setting where a Riccati reference solution is available. For this purpose, the experiment is conducted under constant coefficients, for which the Riccati ODE provides the reference follower cost $J_1^\ast=0.225$.

Figure~\ref{fig:convergence} examines the effect of temporal refinement on the solution quality. Panel~(a) shows that the follower cost $J_1^N$ decreases from $0.521$ at $N=10$ toward the Riccati reference $J_1^\ast=0.225$, achieving a relative error of $3.4\%$ at $N=200$. Panel~(b) shows that the BSDE residual
decays from $2.0\times10^{-2}$ to $1.1\times10^{-4}$ over the same range. Panel~(c) plots the self-convergence error $|J_1^N-J_1^{N_{\max}}|$ against $\Delta t$ on a log-log scale. The fitted slope of $1.30$ is compatible with first-order temporal convergence of the Euler--Maruyama discretization. The
error measured against the Riccati reference yields a comparable fitted slope of $1.20$. Monte Carlo stability is assessed over $20$ independent replicates at $M=64$ paths, yielding $J_1=0.2458\pm0.0021$ with standard error $4.7\times10^{-4}$.

\begin{figure}[H]
	\centering
	\includegraphics[width=\textwidth]{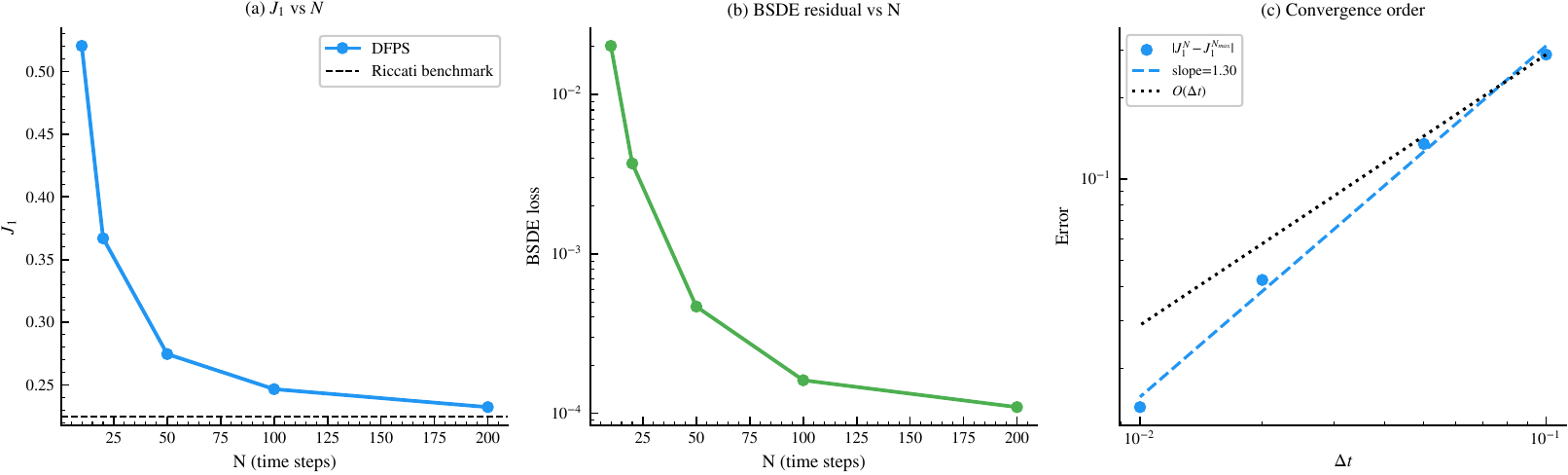}
	\caption{Temporal discretization convergence (constant-coefficient setting):
		(a)~follower cost $J_1^N$ vs.\ Riccati reference $J_1^\ast=0.225$,
		(b)~BSDE residual, (c)~self-convergence error on log-log scale with fitted
		slope $1.30$.}
	\label{fig:convergence}
\end{figure}

\paragraph{Self-convergence under random coefficients}
Under stochastic operator-valued coefficients---the primary setting of this paper---no closed-form reference solution is available. Table~\ref{tab:convergence} therefore reports self-convergence with respect to $N$, using $N=200$ as the numerical reference. The relative error decreases from $18.2\%$ at $N=50$ to $6.2\%$ at $N=100$, which is consistent with the temporal refinement behavior observed in the constant-coefficient test.

\begin{table}[h]
	\centering
	\caption{DFPS self-convergence in $N$ under random coefficients
		(reference: $N=200$, $J_1=0.2323$).}
	\label{tab:convergence}
	\begin{tabular}{ccc}
		\toprule
		$N$ & $J_1$ & Relative error \\
		\midrule
		50  & 0.2746 & $18.2\%$ \\
		100 & 0.2466 &  $6.2\%$ \\
		200 & 0.2323 &  $0.0\%$ (ref) \\
		\bottomrule
	\end{tabular}
\end{table}

\paragraph{Computational scaling}
Figure~\ref{fig:scalability} reports a profiling experiment for the dependence of DFPS on the state dimension $n$. Panel~(b) shows that the number of trainable parameters grows approximately as $\mathcal{O}(n^{1.37})$ ($R^2=0.905$), reflecting the polynomial growth of the network input layers. Panel~(a) reports the wall-clock time for a fixed 200-epoch warm-up, which remains nearly constant across the tested dimensions and is dominated by fixed GPU overhead in this profiling regime. Panel~(c) contrasts this polynomial parameter growth with the theoretical FDM grid size $G^n$ ($G=20$), which grows exponentially and exceeds $10^{65}$ at $n=50$. These profiling results do not constitute a
full convergence-complexity analysis for large $n$, since asymptotic
convergence for $n\geq10$ would require longer training. Nevertheless, they suggest that the DFPS architecture avoids the exponential grid growth associated with grid-based finite-difference methods. Note that FDM does not apply to the random-coefficient setting; it  is shown here only as a reference for the curse of dimensionality.

\begin{figure}[h]
	\centering
	\includegraphics[width=0.95\linewidth]{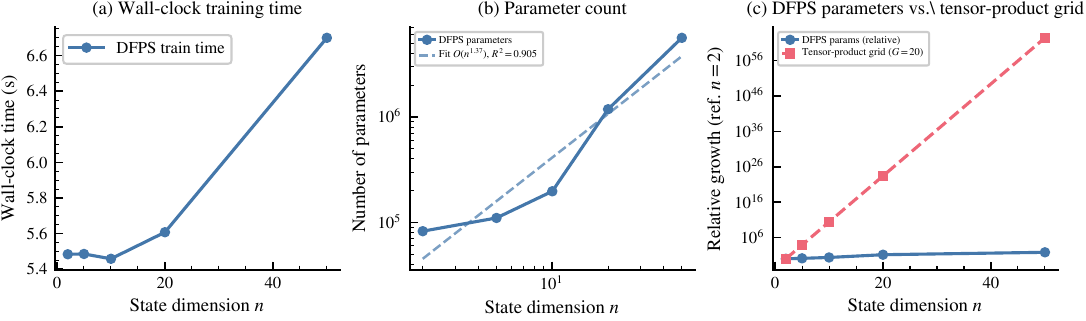}
	\caption{Computational scaling with state dimension $n$: (a)~wall-clock
		training time for a 200-epoch warm-up, (b)~parameter count fitted as
		$\mathcal{O}(n^{1.37})$, (c)~relative growth of DFPS parameters vs.\
		theoretical FDM grid size $G^n$ ($G=20$). FDM does not apply to the
		random-coefficient setting and is shown only for reference.}
	\label{fig:scalability}
\end{figure}
\subsubsection{Baseline Calibration: Riccati Sanity Check}
\label{ssec:sanity_check}

Under constant coefficients, the follower subproblem reduces to a classical LQ problem for which a Riccati reference solution is available. We emphasize that this constant-coefficient regime serves as a qualitative sanity check rather than as the primary benchmark for the proposed method. Table~\ref{tab:riccati}
reports the results across three distinct constant-coefficient scenarios. The observed mean relative error of $12.4\%$ can be partly attributed to the distributional shift in this test setup: DFPS is trained over random coefficient scenarios drawn from the broad ranges in Table~\ref{tab:coeff_ranges}, whereas the Riccati comparison evaluates constant coefficient test instances. Notably, no targeted fine-tuning for these constant coefficients is performed.

This deterministic regime is only a secondary validation. Under generic stochastic operator-valued coefficients, which constitute the primary focus of this paper, the associated Riccati approach does not provide a tractable closed-form baseline \cite{wei2019linear}. In this generalized stochastic
regime, we validate the computed equilibrium through residual diagnostics and the unilateral deviation tests presented in Section~\ref{ssec:equilibrium}, neither of which requires an explicit reference solution.

\begin{table}[h]
	\centering
	\caption{Follower cost: DFPS vs.\ Riccati reference in constant-coefficient scenarios.}
	\label{tab:riccati}
	\begin{tabular}{lccc}
		\toprule
		Scenario & $J_1^{\mathrm{Riccati}}$ & $J_1^{\mathrm{DFPS}}$ & Rel.\ error \\
		\midrule
		1 & 0.1990 & 0.2341 & $17.6\%$ \\
		2 & 0.2475 & 0.2854 & $15.3\%$ \\
		3 & 0.3895 & 0.3727 &  $4.3\%$ \\
		\midrule
		Mean & 0.2787 & 0.2974 & $12.4\%$ \\
		\bottomrule
	\end{tabular}
\end{table}

\subsubsection{Ablation Study}
\label{ssec:ablation}

Table~\ref{tab:ablation} presents an ablation study designed to isolate the essential structural ingredients of the DFPS framework: response-sensitivity extraction, phase-separated Stackelberg training, and augmented Lagrangian enforcement of mean-field consistency. 

Recall that the sensitivity operator $\mathcal{M}_{1,2}$ characterizes how the follower's optimal response varies with respect to the leader's control. By masking this bilevel sensitivity (i.e., manually enforcing $\mathcal{M}_{1,2}\equiv 0$), the leader effectively ignores the follower's rational response mechanism, degrading the system into a simultaneous-play Nash approximation. While the follower's cost remains empirically stable in this specific scenario, the leader's cost drastically increases by $49.3\%$. This performance gap indicates that the explicit Stackelberg anticipation mechanism is essential for leader-side optimality.

The second variant trains both agents simultaneously without phase separation (Naive Deep BSDE, cf.\ \cite{han2017deep}). Although the empirical training loss  converges to a minimal value of $5.87\times10^{-4}$, the resulting actual costs increase by $18.4\%$ for $J_1$ and $34.7\%$ for $J_2$. This shows that  a small BSDE residual alone does not guarantee the recovery of the proper sequential Stackelberg structure. Finally, removing the augmented Lagrangian method (No ALM) leads to complete training divergence, suggesting that explicit enforcement of mean-field consistency constraints is crucial for numerical stability.

\begin{table}[t]
	\centering
	\small 
	\caption{Ablation study (mean $\pm$ std.\ over 5 independent runs, 
		$N=100$, $M=64$). Positive $\Delta J_i$ indicates cost degradation 
		relative to Full DFPS.}
	\label{tab:ablation}
\begin{tabular}{lcccc}
	\toprule
	\textbf{Variant} & $J_1$ & $\Delta J_1$ & $J_2$ & $\Delta J_2$ \\
	\midrule
	Full DFPS$^a$
	& $\mathbf{0.3475\pm0.011}$ & ---
	& $\mathbf{0.2114\pm0.008}$ & --- \\
	No bilevel$^b$
	& $0.3475\pm0.011$ & $0.0\%$
	& $0.3156\pm0.009$ & $+49.3\%$ \\
	Naive Deep BSDE$^c$
	& $0.4114\pm0.010$ & $+18.4\%$
	& $0.2847\pm0.011$ & $+34.7\%$ \\
	No ALM
	& \multicolumn{4}{c}{Does not converge} \\
	\bottomrule
	\multicolumn{5}{l}{$^a$ Full hierarchical model.}\\
	\multicolumn{5}{l}{$^b$ $\mathcal{M}_{1,2}\equiv 0$, 
		response sensitivity removed.}\\
	\multicolumn{5}{l}{$^c$ No phase separation.}\\
\end{tabular}
\end{table}
\subsubsection{Equilibrium Validation and Financial Application}
\label{ssec:equilibrium}

\paragraph{Numerical Stability under Control Perturbations}
To assess the numerical stability of the computed Stackelberg solution under control perturbations, we test the strategy pair 
$(\tilde u_1, \tilde u_2)$ against random unilateral deviations
$u_i^{\mathrm{dev}} = \tilde u_i + \varepsilon\,\delta u_i$,
where $\delta u_i$ is drawn from random unit-norm directions in the 
control space, and $\varepsilon$ measures the perturbation magnitude. To preserve the hierarchical commitment structure of the Stackelberg game, any leader deviation $u_2^{\mathrm{dev}}$ is evaluated together with a recomputed follower response.

Figure~\ref{fig:deviation} reports the cost increment 
$\Delta J_i(\varepsilon) = J_i(u_i^{\mathrm{dev}}, \tilde{u}_{-i}) - J_i^\ast$ as a function of the perturbation magnitude $\varepsilon \in [-2, 2]$, averaged over $6$ seeds and $32$ random directions per seed. The shaded grey region marks the $\pm 1\%$ numerical tolerance band. The baseline equilibrium costs are $J_1^\ast = 0.2312$ and $J_2^\ast = 0.1294$. The maximum absolute deviations are $0.0014$ for both agents, corresponding to $0.60\%$ of $J_1^\ast$ and $1.07\%$ of $J_2^\ast$, both comparable to the prescribed $\pm 1\%$ tolerance. These small residual deviations are consistent with the representational resolution of the neural approximation. The stability of both costs under random unilateral perturbations supports the practical reliability of the computed Stackelberg solution.

\begin{figure}[htpb]
	\centering
	\includegraphics[width=\columnwidth]{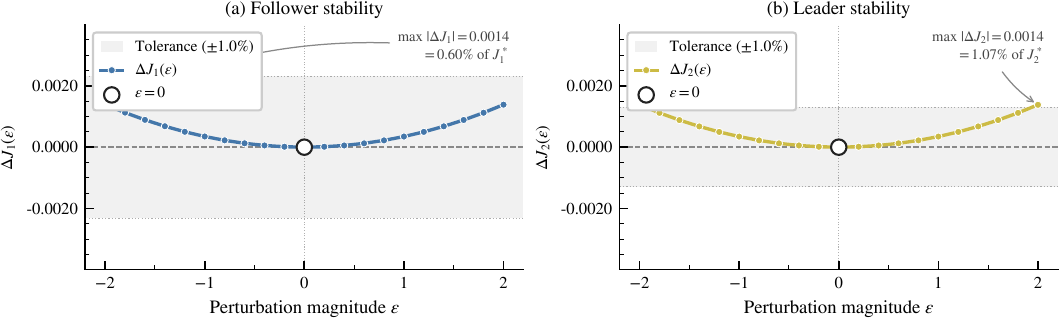}
	\caption{Numerical stability test: cost increment 
		$\Delta J_i(\varepsilon)$ vs.\ perturbation magnitude $\varepsilon$ 
		along random unit-norm directions, averaged over $6$ seeds and 
		$32$ directions per seed. Grey region: $\pm 1\%$ numerical 
		tolerance. The maximum relative deviations are $0.60\%$ (follower) 
		and $1.07\%$ (leader), both within tolerance.}
	\label{fig:deviation}
\end{figure}

\paragraph{Financial interpretation under stochastic volatility}
We illustrate the practical relevance of DFPS using a mean-variance portfolio Stackelberg game. A fund manager (leader, agent~2) sets a strategic benchmark allocation, while an individual investor (follower, agent~1) adjusts her trading strategy to track it under stochastic market conditions. The two-dimensional state $X=(X_1,X_2)^\top\in\mathbb{R}^2$ represents deviations
in stock holdings and cash positions relative to the benchmark. The scalar controls $u_1,u_2\in\mathbb{R}$ denote the trading rates of the investor and the manager, respectively.

Unlike the generic distributions in Table~\ref{tab:coeff_ranges} used for general convergence diagnostics, the financial scenario adopts the specialized distributions described below to reflect the asymmetry between investor and manager.
The system matrices follow the structure
\begin{equation*}
	A_1 = \begin{pmatrix} -\kappa & 0 \\ 0 & -0.3 \end{pmatrix}, \quad
	A_2 = \begin{pmatrix} \gamma & 0 \\ 0 & 0.1 \end{pmatrix}, \quad
	B_i = \ell_i \begin{pmatrix} 1 \\ -\rho_i \end{pmatrix},
\end{equation*}
with mean-reversion rate $\kappa = 0.50$ acting on the 
position-deviation component, herding coefficient $\gamma = 0.15$ 
capturing cross-sectional coupling on the same component, and 
budget-constraint ratios $\rho_1 = 0.9$, $\rho_2 = 0.8$ encoding 
the partial cash offset of stock trades. The liquidity coefficients 
are sampled per scenario as $\ell_1 \sim U[0.7, 1.3]$ for the investor  and $\ell_2 \sim U[1.2, 2.0]$ for the manager, reflecting the larger  market impact of institutional trades. The volatility coefficient  $\sigma$ is drawn from $U[0.04, 0.10]$, $U[0.10, 0.20]$, or 
$U[0.18, 0.30]$ for the low-, medium-, and high-volatility regimes, 
respectively.

Consistent with the randomized training protocol in 
Section~\ref{sub:dis}, the cost parameters are sampled from 
$U[0.99 \cdot \mathrm{nom}, 1.01 \cdot \mathrm{nom}]$ around the 
following nominal values:
\begin{align*}
	Q_1^{\mathrm{nom}} &= \operatorname{diag}(3,1), &
	R_1^{\mathrm{nom}} &= 0.5, &
	G_1^{\mathrm{nom}} &= \operatorname{diag}(2,0.5), &
	\bar{Q}_1^{\mathrm{nom}} &= \operatorname{diag}(0.3,0.1), \\
	Q_2^{\mathrm{nom}} &= \operatorname{diag}(2,0.5), &
	R_2^{\mathrm{nom}} &= 1.0, &
	G_2^{\mathrm{nom}} &= \operatorname{diag}(1.5,0.3), &
	\bar{Q}_2^{\mathrm{nom}} &= \operatorname{diag}(0.5,0.2).
\end{align*}
The asymmetry $Q_1^{\mathrm{nom}}\succ Q_2^{\mathrm{nom}}$ in the
position-tracking component reflects the investor's stronger tracking incentive, while $R_2^{\mathrm{nom}}=2R_1^{\mathrm{nom}}$ captures the manager's larger institutional trading friction. The randomized coefficients introduce variability in both the market dynamics and the objective weights.

Table~\ref{tab:finance} reports the equilibrium costs under different  volatility regimes. The dominant effect is the gap between the  deterministic and stochastic regimes for the investor's tracking  cost ($J_1$: $0.42$ vs $\approx 0.50$, a gap of approximately $2.9$  Monte Carlo standard deviations), indicating that the presence of  volatility uncertainty, rather than its magnitude, is the primary  driver of the investor's tracking-cost increase. The manager's cost  $J_2$ is comparatively insensitive to the presence of stochastic volatility, with the deterministic and stochastic regimes differing by less than one standard deviation. Within the stochastic regimes, the variation across volatility levels for both costs remains within one Monte Carlo standard deviation and should be interpreted as a 
qualitative trend rather than a statistically significant ordering.

\begin{table}[htpb]
	\centering
	\caption{Equilibrium costs under varying volatility regimes 
		(mean $\pm$ Monte Carlo standard deviation over 5 independent 
		path realizations; coefficient instances are fixed per regime).}
	\label{tab:finance}
	\begin{tabular}{lcc}
		\toprule
		Scenario & $J_1$ (Investor) & $J_2$ (Manager) \\
		\midrule
		Low vol ($\sigma \sim U[0.04, 0.10]$)    & $0.5025\pm0.028$ & $0.1721\pm0.029$ \\
		Medium vol ($\sigma \sim U[0.10, 0.20]$) & $0.5042\pm0.028$ & $0.1731\pm0.030$ \\
		High vol ($\sigma \sim U[0.18, 0.30]$)   & $0.5081\pm0.029$ & $0.1752\pm0.030$ \\
		Deterministic baseline (medium-vol nominal $\sigma=0.15$) & $0.4232\pm0.028$ & $0.1690\pm0.030$ \\
		\bottomrule
	\end{tabular}
\end{table}

Figure~\ref{fig:finance} further illustrates the computed Stackelberg equilibrium trajectory and control strategy. Panel~(a) shows the stock-holding deviation $X_1(t)$ across simulated paths, and Panel~(b) reports the investor's mean optimal trading rate $\mathbb{E}[u_1(t)]$. Panel~(c) highlights the asymmetric impact of stochastic volatility: the investor bears a quantifiable increase in tracking cost (about $19\%$ relative to the deterministic baseline), while the manager's cost remains within Monte Carlo noise. These results demonstrate that hierarchical commitment yields measurable tracking-cost advantages even under operator-valued stochastic volatility, which is a regime where Riccati-based methods are not directly applicable.

\begin{figure}[h]
	\centering
	\includegraphics[width=0.8\linewidth]
	{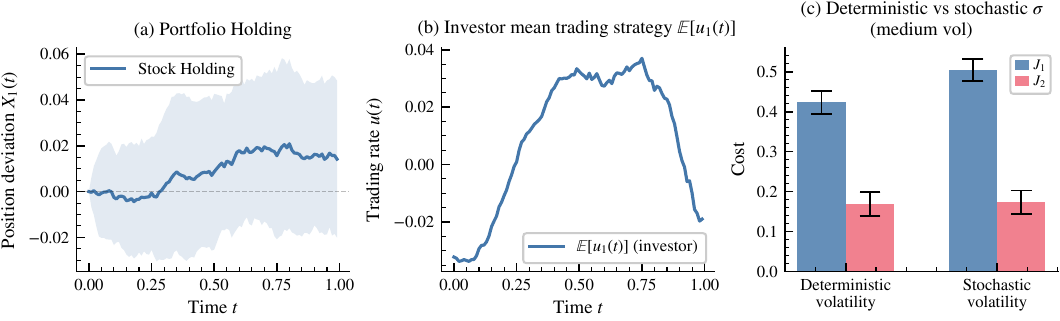}
	\caption{Mean-variance portfolio 
		Stackelberg game 
		($\kappa=0.50$, $\gamma=0.15$): 
		(a)~stock-holding deviation $X_1(t)$ 
		over simulated paths 
		(mean $\pm1\sigma$), 
		(b)~investor's mean optimal trading 
		rate $\mathbb{E}[u_1(t)]$, 
		(c)~equilibrium costs $J_1$ and $J_2$ 
		under deterministic vs.\ stochastic 
		volatility (medium-vol regime, 
		$\sigma\in[0.10,0.20]$).}
	\label{fig:finance}
\end{figure}

\section{Conclusion}\label{sec:conclusion}
This paper developed a theoretical and a deep-learning-based 
numerical method for linear-quadratic mean-field Stackelberg differential games with response-induced stochastic operator-valued coefficients. By applying the extended Lagrange multiplier method, we characterized the Stackelberg equilibrium through a coupled forward-backward stochastic system, in which the optimal controls of both leader and follower admit affine operator-valued representations.

To translate this theoretical structure into a scalable numerical scheme, we proposed the Deep FBSDE Picard Solver (DFPS). Rather than confronting the fully coupled bilevel FBSDE system directly, DFPS employs a phase-separated sequential architecture. This design lets the leader extract the follower's response sensitivity directly from the learned affine response map, avoiding the higher-order variational adjoint equations that typically arise in fully coupled bilevel Stackelberg systems.

Numerical experiments show that DFPS attains small FBSDE residuals 
and mean-field consistency violations under random  coefficients, agrees with the Riccati reference in the constant-coefficient 
sanity check, and passes empirical Stackelberg optimality and 
unilateral-deviation tests. The portfolio application further illustrates the practical relevance of hierarchical commitment under stochastic volatility.

\bibliographystyle{plain}     
\bibliography{ref}

\appendix

\section{The Proof of Problem (MFSOLQ-F)}\label{app:2}

\begin{proof}[The Proof of Theorem \ref{Th:3.1}]
By the linearity of the SDE \eqref{state1} and Lemma~\ref{lem:4.1},
together with the boundedness of all coefficient operators under
(H1), there exist bounded linear operators
$\cH_1:\RR^n\to L^{2,c}_\FF(\RR^n)$,
$\cH_2:\cU_1\to L^{2,c}_\FF(\RR^n)$,
$\cH_3:\cU_2\to L^{2,c}_\FF(\RR^n)$,
and $\cH_0\in L^{2,c}_\FF(\RR^n)$ such that
\[X = \cH_1 x + \cH_2 u_1 + \cH_3 u_2 + \cH_0.
\]
Analogously, there exist bounded linear operators $\cN_i$ ($i=0,1,2,3$) and $\cS_j$ ($j=1,\dots,5$) such that
\[X(T) = \cN_1 x + \cN_2 u_1 + \cN_3 u_2 + \cN_0,
\quad\EE[X] = \cS_1 x + \cS_2 u_1 + \cS_3 u_2 + \cS_4,
\quad\EE[u_1] = \cS_5 u_1.\]
	
Substituting these representations into \eqref{cost0} expresses
$J_1$ as a quadratic functional in $u_1$. After collecting terms,
the quadratic component takes the form $\langle \mathcal{Q}_{u_1} u_1, u_1\rangle_{L^2_\FF(\RR^n)}$, where
\[\mathcal{Q}_{u_1}	= \cH_2^\ast Q_1 \cH_2 + \cS_2^\ast \bar{Q}_1 \cS_2+ R_1 + \cS_5^\ast \bar{R}_1 \cS_5+ \cN_2^\ast G_1 \cN_2.
\]
By Assumption (H2), $R_1 \ge \delta I_{m_1}$ a.e., and all other
summands are nonnegative operators. Hence,
\[\langle \mathcal{Q}_{u_1} u_1, u_1\rangle_{L^2_\FF(\RR^n)}
\ge \delta\, \EE\int_0^T |u_1(s)|^2\,\drm s,
\]which establishes strict convexity.
\end{proof}

\begin{proof}[The Proof of Theorem \ref{Th:3.2}]
The existence of an optimal control follows from a standard application of Mazur's theorem along the lines of \cite[Theorem~5.2]{yong1999stochastic}, utilizing the strong lower-semicontinuity of $J_1$ with respect to $u_1$ implied by (H2).
	
Furthermore, by Theorem \ref{Th:3.1}, the cost functional $J_1(u_1(\cdot), u_2(\cdot))$ is strictly convex in $u_1(\cdot)$. This structural property immediately guarantees that the optimal control, whose existence is established above, must be unique.
	 
Now, we prove the necessity of the stationary condition \eqref{eq:so}.
Let $\tilde{u}_1$ be optimal with the associated state $\tilde{X}$, and let $u_1^\epsilon = \tilde{u}_1 + \epsilon v$ for an arbitrary $v\in\mathcal{U}_1[0,T]$ and $\epsilon\in\mathbb{R}$. The corresponding state perturbation $X_1 := \epsilon^{-1}(X^\epsilon - \tilde{X})$ solves the variational equation
\begin{equation}\label{eq:pf3.2-0}
	\left\{
	\begin{aligned}
		\drm X_1(s) &= [\cA_1 X_1 + \cA_2 \bar{X}_1 + \cB_1 v]\,\drm s
		+ [\cC_1 X_1 + \cC_2 \bar{X}_1 + \cD_1 v]\,\drm W(s),\\
		X_1(0) &= 0.
	\end{aligned}
	\right.
\end{equation}
By Lemma \ref{lem:4.1}, $\mathbb{E}[\sup_{s\in[0,T]} |X^\epsilon(s) - \tilde{X}(s)|^2] \le K\epsilon^2$.

The first-order optimality condition yields
\begin{equation}\label{eq:pf3.2-1}
	\begin{aligned}
		0 &= \lim_{\epsilon \to 0}
		\frac{J_1(u_1^\epsilon,u_2) - J_1(\tilde{u}_1,u_2)}{\epsilon}\\
		&= 2\,\mathbb{E}\left\{\int_0^T \left[
		\langle Q_1 \tilde{X} +\mathbb{E}[\bar{Q}_1]\bar{\tilde{X}},\,X_1\rangle
		+ \langle R_1 \tilde{u}_1 +\mathbb{E}[\bar{R}_1]\bar{\tilde{u}}_1,\,v\rangle
		\right]\drm s
		+ \langle G_1 \tilde{X}(T),\,X_1(T)\rangle\right\},
	\end{aligned}
\end{equation}
where the last equality follows from the fact that for any deterministic process $\eta(\cdot)$, $\mathbb{E}\langle \bar{Q}_1 \eta, \bar{X}_1 \rangle = \langle \mathbb{E}[\bar{Q}_1] \eta, \mathbb{E}[X_1] \rangle = \mathbb{E}\langle \mathbb{E}[\bar{Q}_1]\eta, X_1 \rangle$.

Then, applying It\^o's formula to $\langle \tilde{Y}, X_1\rangle$ and taking expectations, the drift terms of the BSDE \eqref{eq:F1} exactly cancel the
$Q_1$- and $\bar{Q}_1$-terms in \eqref{eq:pf3.2-1}, leading to
\begin{equation}\label{eq:pf3.2-2}
	\mathbb{E}\langle G_1\tilde{X}(T), X_1(T)\rangle
	= \mathbb{E}\int_0^T\Bigl[
	-\langle Q_1\tilde{X} +\mathbb{E}[\bar{Q}_1]\bar{\tilde{X}},\,X_1\rangle
	+ \langle \cB_1^\ast \tilde{Y} + \cD_1^\ast \tilde{Z},\,v\rangle
	\Bigr]\drm s.
\end{equation}
Substituting \eqref{eq:pf3.2-2} into \eqref{eq:pf3.2-1} gives
\[
0 = 2\,\mathbb{E}\int_0^T
\langle \cB_1^\ast \tilde{Y} + \cD_1^\ast \tilde{Z}
+ R_1\tilde{u}_1 + \mathbb{E}[\bar{R}_1]\bar{\tilde{u}}_1,\,v\rangle\,\drm s.
\]
Since $v\in\mathcal{U}_1[0,T]$ is an arbitrary adapted process, the stationarity condition \eqref{eq:so} follows a.e.\ $s \in [0,T]$, $\mathbb{P}$-a.s.

For the sufficiency of \eqref{eq:so}, since $u_1^\epsilon = \tilde{u}_1 + \epsilon v$, combining with the stationary condition \ref{eq:so}, then we obtain
\[
J_1(u_1^\epsilon,u_2) - J_1(\tilde{u}_1,u_2)
= \epsilon^2 I,
\]
where  
\[
I = \mathbb{E}\int_0^T\bigl[
\langle Q_1 X_1, X_1\rangle + \langle \bar{Q}_1 \bar{X}_1, \bar{X}_1\rangle
+ \langle R_1 v, v\rangle + \langle \bar{R}_1 \bar{v}, \bar{v}\rangle
\bigr]\drm s + \mathbb{E}\langle G_1 X_1(T), X_1(T)\rangle \ge 0
\]
by (H2). Hence $J_1(u_1^\epsilon,u_2) \ge J_1(\tilde{u}_1,u_2)$ for
all $\epsilon$ and $v$, establishing that $\tilde{u}_1$ is indeed optimal.
\end{proof}

Now, we  present the detailed proof of Lemma \ref{lem:3.7} in Sub-Problem (F-2).

\begin{proof}[The proof of Lemma \ref{lem:3.7}]
	Let $\bl_1^\ast = (\lambda^{\ast}_1, \ti{\lambda}^{\ast}_1)$ be the optimal pair to Problem (F-2), and let $(X^{\bea_1,\bl_1^\ast}(\cdot), Y^{\bea_1,\bl_1^\ast}(\cdot), Z^{\bea_1,\bl_1^\ast}(\cdot))$ be the corresponding state process satisfying the FBSDE \eqref{eq:so-u} with $(\la_1,\ti{\la}_1)$ replaced by $(\la_1^\ast, \ti{\la}_1^\ast)$.  
	
	Define $\bl_1^\ep = (\lambda^\ep_1, \ti{\lambda}^\ep_1)$ by $\lambda^\ep_1 = \lambda^\ast_1 + \epsilon\lambda^1_1$ and $\ti{\lambda}^\ep_1 = \ti{\lambda}^\ast_1 + \epsilon\ti{\lambda}^1_1$, where $\bl_1^1 = (\lambda^1_1, \ti{\lambda}^1_1)$ is an arbitrary random variable pair in $(\mathbb{L}^2)^2$, with its corresponding state trajectory being $(X^{\bea_1,\bl_1^1}(\cdot), Y^{\bea_1,\bl_1^1}(\cdot), Z^{\bea_1,\bl_1^1}(\cdot))$. Moreover, let $(X^{\bea_1,\bl_1^\ep}(\cdot), Y^{\bea_1,\bl_1^\ep}(\cdot), Z^{\bea_1,\bl_1^\ep}(\cdot))$ denote the corresponding state trajectory for the perturbed variable pair $\bl_1^\ep$.
	
	To simplify notation, we replace the superscripts $(\bea_1,\bl_1^\ast)$, $(\bea_1,\bl_1^\ep)$, and $(\bea_1,\bl_1^1)$ of the state triple $(X^{\cdot}(\cdot),Y^{\cdot}(\cdot),Z^{\cdot}(\cdot))$ with $\ast$, $\ep$, and $1$, respectively.
	
	Then, we introduce the following variation equation:
	\begin{equation*}
		\left\{	\begin{aligned}
			\drm X^1 (t)=&\left[\cA_1X^1
			-\cB_1R_1^{-1}(\cB_1^{\top}Y^1
			+\cD_1^{\top}Z^1+\lambda^1_1)
			\right]\drm t\\
			&+[\cC_1X^1-\cD_1R_1^{-1}(\cB_1^{\top}Y^1+\cD_1^{\top}Z^1+\lambda^1_1) ]\drm W (t),\\				
			\drm Y^1 (t)=&-[\cA_1^{\top}Y^1
			+\cC_1^{\top}Z^1+Q_1X^1+\ti{\lambda}^1_1]\drm t
			+Z^1\drm W (t),\\
			X^1(0)=&0,\qquad Y^1(T)=G_1X^1(T).
		\end{aligned}\right.
	\end{equation*}
	Notice that
	\begin{equation}\label{eq:3.7-0}
		\begin{aligned}
			&\lim_{\epsilon\to 0}\frac{\hat{\hat{J}}^{\bea_1}(\la^\ep_1(\cdot),\ti{\la}^\ep_1(\cdot))-\hat{\hat{J}}^{\bea_1}(\la^\ast_1(\cdot),\ti{\la}^\ast_1(\cdot))}{\ep}\\
			=&2 \mathbb{E}\Big\{\int_0^T\big[\langle Q_1X^{\ast},X^1\rangle
			+ \langle \cB_1R_1^{-1}\cB_1^{\top}Y^{\ast},Y^1\rangle+ \langle \cD_1R_1^{-1}\cD_1^{\top}Z^{\ast},Z^1 \rangle
			+\langle \cD_1R_1^{-1}\cB_1^{\top}Y^{\ast},Z^1\rangle\\
			&
			+\langle \cD_1R_1^{-1}\cB_1^{\top}Y^1,Z^{\ast}\rangle-\langle \lambda_1^\ast,R_1^{-1}\lambda^1_1\rangle-\langle\lambda^1_1,\alpha_1\rangle+\langle \ti{\la}_1^\ast,X^1\rangle+\langle \ti{\la}^1_1,X^{\ast}\rangle-\langle \ti{\lambda}^1_1,\beta_1\rangle\big]\drm s\\
			&+\langle G_1X^{\ast}(T),X^1(T)\rangle\Big\}.
		\end{aligned}
	\end{equation}
	Applying It\^{o}'s formula to $\langle Y^{\ast},X^1\rangle$ yields
	\begin{equation*}
		\begin{aligned}
			\drm\langle Y^{\ast},X^1\rangle
			=&\big[\langle Y^{\ast},\cA_1X^1
			-\cB_1R_1^{-1}(\cB_1^{\top}Y^1
			+\cD_1^{\top}Z^1
			+\lambda^1_1)\rangle +\langle Z^{\ast} , \cC_1X^1
			-\cD_1R_1^{-1}(\cB_1^{\top}Y^1
			+\cD_1^{\top}Z^1
			+\lambda^1_1) \rangle\\
			&-\langle \cA_1^{\top}Y^{\ast}+\cC_1^{\top}Z^{\ast} +Q_1X^{\ast}+\ti{\lambda}^{\ast}_1,X^1\rangle \big] \drm s
			+\langle \cdots \rangle \drm W(s).
		\end{aligned}
	\end{equation*}
	By taking the expectation on both sides of the above equation, we have that
	\begin{equation}\label{eq:3.7-1}
		\begin{aligned}
			\mathbb{E}\langle G_1X^{\ast}(T),X^1(T)\rangle
			=&\mathbb{E}\bigg\{\int_{0}^{T}\big[\langle Y^{\ast},
			-\cB_1R_1^{-1}(\cB_1^{\top}Y^1
			+\cD_1^{\top}Z^1
			+\lambda^1_1)\rangle+\langle Z^{\ast} , 
			-\cD_1R_1^{-1}(\cB_1^{\top}Y^1
			+\cD_1^{\top}Z^1
			+\lambda^1_1) \rangle\\
			&-\langle  Q_1X^{\ast}+\ti{\lambda}^{\ast}_1,X^1\rangle \big]\drm s \bigg\}.
		\end{aligned}
	\end{equation}
	
	By substituting \eqref{eq:3.7-1} into \eqref{eq:3.7-0}, together with the expression given in \eqref{eq:OC1}, we obtain  
	\begin{equation*}
		\begin{aligned}
			0=&\lim_{\epsilon\to 0}\frac{\hat{\hat{J}}^{\bea_1}(\lambda^{\epsilon}_1(\cdot),\ti{\lambda}^{\epsilon}_1(\cdot))
				-\hat{\hat{J}}^{\bea_1}(\lambda^{\ast}_1(\cdot),\ti{\lambda}^{\ast}_1(\cdot))}{\epsilon}\\
			=&2 \mathbb{E}\Big\{\int_0^T\big[\langle -R_1^{-1}\left[\cB_1^\ast Y^{\ast}+\cD_1^\ast Z^{\ast}+\lambda_1^\ast\right]-\alpha_1, \lambda^1_1\rangle+\langle \ti{\la}^1_1,X^{\ast}\rangle-\langle \ti{\la}^1_1,\beta_1\rangle\big]\drm s \Big\}.
		\end{aligned}
	\end{equation*}
	
	Therefore, by the arbitrariness of the variation pair $(\lambda^1_1, \ti{\la}^1_1)$, we get that if $(\lambda^{\ast}_1,\ti{\la}_1^{\ast})$ is the optimal pair, then $\mathbb{E}\tu_1^{\bea_1,\bl_1^\ast}=\alpha_1$ and $\mathbb{E}X^{\ast}=\beta_1$. 
\end{proof}

Now, we turn to proving the main theorem for Problem (F-3) in detail. First, we provide the detailed proof of Lemma \ref{lem:cc}.

\begin{proof}[The proof of Lemma \ref{lem:cc}]
	By inserting the operator representations of \(\tu_1^{\bea_1,\bl_1}(\cdot)\), \(X^{\bea_1,\bl_1}(\cdot)\), \(X^{\bea_1,\bl_1}(T)\), and \(\beta_1(T)\) , which are given from \eqref{eq:ou1} to \eqref{eq:bo} respectively, into the cost functional \eqref{eq:cost3}, we obtain that
\begin{align*}
	&\tilde{J}_1(\alpha_1(\cdot),\beta_1(\cdot))\\
	&=\mathbb{E}\Big\{\int_0^T\Big[\langle Q_1 (\cK_{2,1}x+\cK_{2,2}\alpha_1+\cK_{2,3}\beta_1+\cK_{2,4}u_2+\cK_{2,5}),\cK_{2,1}x+\cK_{2,2}\alpha_1+\cK_{2,3}\beta_1+\cK_{2,4}u_2+\cK_{2,5}\rangle\\
	&+\langle\bar{Q}_1 
	\beta_1 ,\beta_1 \rangle+ \langle R_1 (\cK_{1,1} x+\cK_{1,2} \alpha_1+\cK_{1,3}\beta_1+\cK_{1,4} u_2+\cK_{1,5}) ,\cK_{1,1} x+\cK_{1,2} \alpha_1+\cK_{1,3}\beta_1+\cK_{1,4} u_2+\cK_{1,5} \rangle\\
	&+\langle \bar{R}_1 \alpha_1 ,\alpha_1 \rangle\Big]\drm s+\langle G_1(\cK_{3,1}x+\cK_{3,2}\alpha_1+\cK_{3,3}\beta_1+\cK_{3,4}u_2+\cK_{3,5}),\cK_{3,1}x+\cK_{3,2}\alpha_1+\cK_{3,3}\beta_1+\cK_{3,4}u_2+\cK_{3,5}\rangle\Big\}\\
	&=\left\langle(\cK_{2,1}^\ast Q_1\cK_{2,1}+\cK_{1,1}^\ast R_1\cK_{1,1}+\cK_{3,1}^\ast  G_1 \cK_{3,1})x,x\right\rangle_{\RR^n}\\
	&+\left\langle(\cK_{2,2}^\ast Q_1\cK_{2,2}+\cK_{1,2}^\ast R_1\cK_{1,2}+\bar{R}_1+\cK_{3,2}^\ast  G_1 \cK_{3,2})\alpha_1,\alpha_1\right\rangle_{\LL^2}\\
	&+\left\langle(\cK_{2,3}^\ast Q_1\cK_{2,3}+\cK_{1,3}^\ast R_1\cK_{1,3}+\bar{Q}_1+\cK_{3,3}^\ast  G_1 \cK_{3,3})\beta_1,\beta_1\right\rangle_{\LL^2}\\
	&+\left\langle(\cK_{2,4}^\ast Q_1\cK_{2,4}+\cK_{1,4}^\ast R_1\cK_{1,4}+\cK_{3,4}^\ast  G_1 \cK_{3,4})u_2,u_2\right\rangle_{\cU_2}\\
	&+2\left\langle (\cK_{2,2}^\ast Q_1\cK_{2,1}+\cK_{1,2}^\ast R_1 \cK_{1,1}+\cK_{3,2}^\ast  G_1 \cK_{3,1})x,\alpha_1\right\rangle_{\LL^2}\\
	&+2\left\langle (\cK_{2,3}^\ast Q_1\cK_{2,1}+\cK_{1,3}^\ast R_1 \cK_{1,2}+\cK_{3,3}^\ast  G_1 \cK_{3,1})x,\beta_1\right\rangle_{\LL^2}\\
	&+2\left\langle (\cK_{2,4}^\ast Q_1\cK_{2,1}+\cK_{1,4}^\ast R_1 \cK_{1,2}+\cK_{3,4}^\ast  G_1 \cK_{3,1})x,u_2\right\rangle_{\cU_2}\\
	&+2\left\langle (\cK_{2,2}^\ast Q_1\cK_{2,3}+\cK_{1,2}^\ast R_1 \cK_{1,3}+\cK_{3,2}^\ast  G_1 \cK_{3,3})\beta_1,\alpha_1\right\rangle_{\LL^2}\\
	&+2\left\langle (\cK_{2,2}^\ast Q_1\cK_{2,4}+\cK_{1,2}^\ast R_1 \cK_{1,4}+\cK_{3,2}^\ast  G_1 \cK_{3,4})u_2,\alpha_1\right\rangle_{\LL^2}\\
	&+2\left\langle (\cK_{2,3}^\ast Q_1\cK_{2,4}+\cK_{1,3}^\ast R_1 \cK_{1,4}+\cK_{3,3}^\ast  G_1 \cK_{3,4})u_2,\beta_1\right\rangle_{\LL^2}\\
	&+2\left\langle x,  \cK_{2,1}^\ast Q_1 \cK_{2,5}+\cK_{1,1}^\ast R_1 \cK_{1,5}+\cK_{3,1}^\ast G_1 \cK_{3,5}\right\rangle_{\RR^n}\\
	&+2\left\langle \alpha_1,  \cK_{2,2}^\ast Q_1 \cK_{2,5}+\cK_{1,2}^\ast R_1 \cK_{1,5}+\cK_{3,2}^\ast G_1 \cK_{3,5}\right\rangle_{\LL^2}\\
	&+2\left\langle \beta_1,  \cK_{2,3}^\ast Q_1 \cK_{2,5}+\cK_{1,3}^\ast R_1 \cK_{1,5}+\cK_{3,3}^\ast G_1 \cK_{3,5}\right\rangle_{\LL^2}\\
	&+2\left\langle u_2,  \cK_{2,4}^\ast Q_1 \cK_{2,5}+\cK_{1,4}^\ast R_1 \cK_{1,5}+\cK_{3,4}^\ast G_1\cK_{3,5}\right\rangle_{\LL^2}\\
	&+2\left\langle  \cK_{2,5}^\ast Q_1 \cK_{2,5}+\cK_{1,5}^\ast R_1 \cK_{1,5}+\cK_{3,5}^\ast G_1 \cK_{3,5},\right.\\
	&\left.\qquad\quad  \cK_{2,5}^\ast Q_1 \cK_{2,5}+\cK_{1,5}^\ast R_1 \cK_{1,5}+\cK_{3,5}^\ast G_1  \cK_{3,5}\right\rangle_{\LL^2}.
\end{align*}
Based on assumptions (H1) and (H2), we obtain that
\begin{align*}
	&\left\langle(\cK_{2,2}^\ast Q_1\cK_{2,2}+\cK_{1,2}^\ast R_1\cK_{1,2}+\bar{R}_1+\cK_{3,2}^\ast  G_1 \cK_{3,2})\alpha_1,\alpha_1\right\rangle_{\LL^2}\geq \delta \EE\int_0^T |\alpha_1|^2\drm s>0,\\
	&\left\langle(\cK_{2,3}^\ast Q_1\cK_{2,3}+\cK_{1,3}^\ast R_1\cK_{1,3}+\bar{Q}_1+\cK_{3,3}^\ast  G_1 \cK_{3,3})\beta_1,\beta_1\right\rangle_{\LL^2}\geq \delta \EE\int_0^T |\beta_1|^2\drm s>0,
\end{align*}
which implies the strict convexity of the cost functional \(\ti{J}_1(\alpha_1(\cdot),\beta_1(\cdot))\) with respect to \(\alpha_1(\cdot)\) and \(\beta_1(\cdot)\) respectively.
\end{proof}

\begin{proof}[The proof of Theorem \ref{th:3.10}] Suppose that \((\al_1^\ast, \beta_1^\ast)\) are the optimal control variables. Then we have that
\begin{align*}
	&\tilde{J}_1(\alpha_1^\ast(\cdot),\beta_1^\ast(\cdot))\\
	&=\left\langle(\cK_{2,1}^\ast Q_1\cK_{2,1}+\cK_{1,1}^\ast R_1\cK_{1,1}+\cK_{3,1}^\ast  G_1 \cK_{3,1})x,x\right\rangle_{\RR^n}\\
	&+\left\langle(\cK_{2,2}^\ast Q_1\cK_{2,2}+\cK_{1,2}^\ast R_1\cK_{1,2}+\bar{R}_1+\cK_{3,2}^\ast  G_1 \cK_{3,2})\alpha_1,\alpha_1\right\rangle_{\LL^2}\\
	&+\left\langle(\cK_{2,3}^\ast Q_1\cK_{2,3}+\cK_{1,3}^\ast R_1\cK_{1,3}+\bar{Q}_1+\cK_{3,3}^\ast  G_1 \cK_{3,3})\beta_1,\beta_1\right\rangle_{\LL^2}\\
	&+\left\langle(\cK_{2,4}^\ast Q_1\cK_{2,4}+\cK_{1,4}^\ast R_1\cK_{1,4}+\cK_{3,4}^\ast  G_1 \cK_{3,4})u_2,u_2\right\rangle_{\cU_2}\\
	&+2\left\langle (\cK_{2,2}^\ast Q_1\cK_{2,1}+\cK_{1,2}^\ast R_1 \cK_{1,1}+\cK_{3,2}^\ast  G_1 \cK_{3,1})x,\alpha_1\right\rangle_{\LL^2}\\
	&+2\left\langle (\cK_{2,3}^\ast Q_1\cK_{2,1}+\cK_{1,3}^\ast R_1 \cK_{1,2}+\cK_{3,3}^\ast  G_1 \cK_{3,1})x,\beta_1\right\rangle_{\LL^2}\\
	&+2\left\langle (\cK_{2,4}^\ast Q_1\cK_{2,1}+\cK_{1,4}^\ast R_1 \cK_{1,2}+\cK_{3,4}^\ast  G_1 \cK_{3,1})x,u_2\right\rangle_{\cU_2}\\
	&+2\left\langle (\cK_{2,2}^\ast Q_1\cK_{2,3}+\cK_{1,2}^\ast R_1 \cK_{1,3}+\cK_{3,2}^\ast  G_1 \cK_{3,3})\beta_1,\alpha_1\right\rangle_{\LL^2}\\
	&+2\left\langle (\cK_{2,2}^\ast Q_1\cK_{2,4}+\cK_{1,2}^\ast R_1 \cK_{1,4}+\cK_{3,2}^\ast  G_1 \cK_{3,4})u_2,\alpha_1\right\rangle_{\LL^2}\\
	&+2\left\langle (\cK_{2,3}^\ast Q_1\cK_{2,4}+\cK_{1,3}^\ast R_1 \cK_{1,4}+\cK_{3,3}^\ast  G_1 \cK_{3,4})u_2,\beta_1\right\rangle_{\LL^2}\\
	&+2\left\langle x,  \cK_{2,1}^\ast Q_1 \cK_{2,5}+\cK_{1,1}^\ast R_1 \cK_{1,5}+\cK_{3,1}^\ast G_1 \cK_{3,5}\right\rangle_{\RR^n}\\
	&+2\left\langle \alpha_1,  \cK_{2,2}^\ast Q_1 \cK_{2,5}+\cK_{1,2}^\ast R_1 \cK_{1,5}+\cK_{3,2}^\ast G_1 \cK_{3,5}\right\rangle_{\LL^2}\\
	&+2\left\langle \beta_1,  \cK_{2,3}^\ast Q_1 \cK_{2,5}+\cK_{1,3}^\ast R_1 \cK_{1,5}+\cK_{3,3}^\ast G_1 \cK_{3,5}\right\rangle_{\LL^2}\\
	&+2\left\langle u_2,  \cK_{2,4}^\ast Q_1 \cK_{2,5}+\cK_{1,4}^\ast R_1 \cK_{1,5}+\cK_{3,4}^\ast G_1\cK_{3,5}\right\rangle_{\LL^2}\\
	&+2\left\langle  \cK_{2,5}^\ast Q_1 \cK_{2,5}+\cK_{1,5}^\ast R_1 \cK_{1,5}+\cK_{3,5}^\ast G_1 \cK_{3,5},\right.\\
	&\left.\qquad\quad  \cK_{2,5}^\ast Q_1 \cK_{2,5}+\cK_{1,5}^\ast R_1 \cK_{1,5}+\cK_{3,5}^\ast G_1  \cK_{3,5}\right\rangle_{\LL^2}.
\end{align*}
Therefore, \((\alpha^\ast_1(\cdot),\beta_1^\ast(\cdot))\) is the optimal pair if and only if 
\begin{equation}
	\left\{
	\begin{aligned}
		&(\cK_{2,2}^\ast Q_1\cK_{2,2}+\cK_{1,2}^\ast R_1\cK_{1,2}+\bar{R}_1+\cK_{3,2}^\ast  G_1 \cK_{3,2})\alpha_1^\ast+(\cK_{2,2}^\ast Q_1\cK_{2,1}+\cK_{1,2}^\ast R_1 \cK_{1,1}+\cK_{3,2}^\ast  G_1 \cK_{3,1})x\\
		&\quad+(\cK_{2,2}^\ast Q_1\cK_{2,3}+\cK_{1,2}^\ast R_1 \cK_{1,3}+\cK_{3,2}^\ast  G_1 \cK_{3,3})\beta_1^\ast+(\cK_{2,2}^\ast Q_1\cK_{2,4}+\cK_{1,2}^\ast R_1 \cK_{1,4}+\cK_{3,2}^\ast  G_1 \cK_{3,4})u_2\\
		&\quad+\cK_{2,2}^\ast Q_1 \cK_{2,5}+\cK_{1,2}^\ast R_1 \cK_{1,5}+\cK_{3,2}^\ast G_1  \cK_{3,5}=0,\\
		&(\cK_{2,3}^\ast Q_1\cK_{2,3}+\cK_{1,3}^\ast R_1\cK_{1,3}+\bar{Q}_1+\cK_{3,3}^\ast  G_1 \cK_{3,3})\beta_1^\ast+ (\cK_{2,3}^\ast Q_1\cK_{2,1}+\cK_{1,3}^\ast R_1 \cK_{1,1}+\cK_{3,3}^\ast  G_1 \cK_{3,1})x\\
		&\quad+(\cK_{2,3}^\ast Q_1\cK_{2,2}+\cK_{1,3}^\ast R_1 \cK_{1,2}+\cK_{3,3}^\ast  G_1 \cK_{3,2})\alpha_1^\ast+(\cK_{2,3}^\ast Q_1\cK_{2,4}+\cK_{1,3}^\ast R_1 \cK_{1,4}+\cK_{3,3}^\ast  G_1 \cK_{3,4})u_2\\
		&\quad+\cK_{2,3}^\ast Q_1 \cK_{2,5}+\cK_{1,3}^\ast R_1 \cK_{1,5}+\cK_{3,3}^\ast G_1  \cK_{3,5}=0.
	\end{aligned}
	\right.
\end{equation}
Moreover, we can equivalently rewrite the above system of equations with respect to the variables \(\alpha_1^\ast\) and \(\beta_1^\ast\) into the following matrix equation
\begin{align} (\cW+\cK_{23}^\ast\cT\cK_{23})\cdot(\alpha^\ast_1,\beta_1^\ast)^{\top}+\cK_{23}^\ast\cT \cK_{14}\cdot (x,u_2)^{\top}+\cK_{23}^\ast\cT\cK_5=(0,0)^{\top},
\end{align}
where \(\cW=\begin{pmatrix}
	\bar{R}_1&0\\
	0&\bar{Q}_1
\end{pmatrix}\),  \(\cK_{23}=\begin{pmatrix}
	\cK_{1,2}&\cK_{1,3}\\
	\cK_{2,2}&\cK_{2,3}\\
	\cK_{3,2}&\cK_{3,3}
\end{pmatrix}\), \(\cT=\begin{pmatrix}
	R_1&0&0\\
	0&Q_1&0\\
	0&0&G_1
\end{pmatrix}\), \(\cK_{14}=\begin{pmatrix}
	\cK_{1,1}&\cK_{1,4}\\
	\cK_{2,1}&\cK_{2,4}\\
	\cK_{3,1}&\cK_{3,4}
\end{pmatrix}\),  and \(\cK_5=
(\cK_{1,5},\cK_{2,5},\cK_{3,5})^\ast\).
\end{proof}
 
\section{The Proof of Proposition \ref{prop:alm_bound}}\label{app:A}
\begin{proof}
	Fix an iteration $p$ such that $\rho_{v,i}^{(p)}>\eta^{-1}$. Then, rearranging~\eqref{eq:eps_net_def} gives
	\[r_{v,i}^{(p)}=\frac{\lambda_{v,i}^{(p+1)}-\lambda_{v,i}^{(p)}-\varepsilon_{\rm net}^{(p)}}{\rho_{v,i}^{(p)}}.\]
	
	Hence, by the triangle inequality, we have
	\begin{equation}\label{eq:R_isolated}
		\mathcal R_{v,i}^{(p)}=\|r_{v,i}^{(p)}\|\le\frac{\|\lambda_{v,i}^{(p+1)}-\lambda_{v,i}^{(p)}\|}{\rho_{v,i}^{(p)}}+\frac{\|\varepsilon_{\rm net}^{(p)}\|}{\rho_{v,i}^{(p)}}.
	\end{equation}
	
	Now, we consider the proximal dual objective at iteration $p$, with proximal centre chosen as the current iterate,i.e.\ $\lambda_{v,i}^{\rm prev}=\lambda_{v,i}^{(p)}$. Then from \eqref{eq:dual_update}, we obtain
	\[\nabla_\lambda \mathcal L_\lambda^{(p)}\bigl(\phi\bigr)
	=-r_{v,i}^{(p)}+\eta(\lambda_{v,i}(\cdot,\phi)-\lambda_{v,i}^{\rm prev})=-r_{v,i}^{(p)},\] with $r_{v,i}^{(p)}=\text{\rm viol}$.  Let $\lambda_{v,i}^{*,(p)} := \arg\min_\lambda \mathcal
	L_\lambda^{(p)}(\lambda)$ denote the exact minimiser of the dual subproblem at iteration $p$.
	
	Because the inclusion of the proximal term renders $\mathcal{L}_\lambda^{(p)}$ strictly $\eta$-strongly convex in $\lambda_{v,i}$, then we have
	\[\|\lambda_{v,i}^{(p)} - \lambda_{v,i}^{*,(p)}\| \;\le\; \eta^{-1} \bigl\| \nabla_\lambda \mathcal{L}_\lambda^{(p)}(\lambda^{(p)}) \bigr\| \;=\; \eta^{-1} \|{-r_{v,i}^{(p)}}\| \;=\; \eta^{-1}\mathcal{R}_{v,i}^{(p)}.
	\]
	Applying the triangle inequality to the actual dual increment, and invoking the subproblem bound from Assumption~\ref{ass:bounded_errors}, we establish:
	\begin{align}\label{eq:lambda_inc}
		\|\lambda_{v,i}^{(p+1)} - \lambda_{v,i}^{(p)}\| &\;\le\; \|\lambda_{v,i}^{(p)} - \lambda_{v,i}^{*,(p)}\| + \|\lambda_{v,i}^{(p+1)} - \lambda_{v,i}^{*,(p)}\| \notag \\
		&\;\le\; \eta^{-1}\mathcal{R}_{v,i}^{(p)} + \bar\varepsilon_{\rm opt}.
	\end{align}
	
	Substituting~\eqref{eq:lambda_inc} into~\eqref{eq:R_isolated}, we obtain
	\[\mathcal R_{v,i}^{(p)}\le\frac{\eta^{-1}}{\rho_{v,i}^{(p)}}\mathcal R_{v,i}^{(p)}+\frac{\varepsilon_{\rm opt}^{(p)}+\|\varepsilon_{\rm net}^{(p)}\|}{\rho_{v,i}^{(p)}}.
	\]
	Since $\rho_{v,i}^{(p)}>\eta^{-1}$, rearranging yields
	\[\mathcal R_{v,i}^{(p)}\le\frac{\varepsilon_{\rm opt}^{(p)}+\|\varepsilon_{\rm net}^{(p)}\|}{\rho_{v,i}^{(p)}-\eta^{-1}}.
	\]
	Applying Assumption~\ref{ass:bounded_errors} gives~\eqref{eq:viol_bound}. 
	
	The final claim follows immediately from the adaptive penalty design: since the scheme multiplies the penalty by a strict factor $\tau > 1$ whenever the violation stagnates above the specified tolerance, the sequence $\rho_{v,i}^{(p)}$ either terminate with a satisfied tolerance or diverge to infinity (i.e., $\rho_{v,i}^{(p)}\to\infty$). In the latter case, the upper bound in \eqref{eq:viol_bound} is strictly driven to zero.
\end{proof}
\end{document}